\documentclass{amsart}

\usepackage{amsmath}
\usepackage{amssymb}
\usepackage{amsthm}
\usepackage{subcaption}
\usepackage{comment}
\usepackage{hyperref,color}
\usepackage{graphicx}

\graphicspath{{Figures/}} 
\graphicspath{{./Figures/}}

\def\cA{\mathcal{A}}
\def\cB{\mathcal{B}}
\def\cC{\mathcal{C}}
\def\cD{\mathcal{D}}
\def\cE{\mathcal{E}}
\def\cZ{\mathcal{Z}}
\def\tcC{\widetilde{\cC}}
\def\tcA{\widetilde{\cA}}

\def\Ai{\mathcal{A}_{\mathrm{map}}}

\def\kk{\mathbf{k}}

\def\cAk{\cA}
\def\tcAk{\widetilde{\cA}}
\def\Ak{A}
\def\tAk{\widetilde{A}}
\def\Pk{P_k}

\def\tA{\widetilde{A}}

\def\Sym{\mathrm{Sym}}
\def\Seq{\mathrm{Seq}}
\def\Set{\mathrm{Set}}
\def\MSet{\mathrm{MSet}}
\def\Cyc{\mathrm{Cyc}}

\def\Int{\mathrm{Int}}

\def\od{\overline{d}}

\def\Gammal{\frownacc{\Gamma}}

\def\dtv{d_{\mathrm{TV}}}

\newcommand{\adjustedaccent}[1]{%
  \mathchoice{}{}
    {\mbox{\raisebox{-.35ex}[0pt][0pt]{$\scriptstyle#1$}}}
    {\mbox{\raisebox{-.35ex}[0pt][0pt]{$\scriptscriptstyle#1$}}}
}

\newcommand\frownacc[1]{\overset{\adjustedaccent{\smallfrown}}{#1}}

\newtheorem{prop}{Proposition}
\newtheorem{lem}[prop]{Lemma}

\theoremstyle{remark}
\newtheorem{remark}{Remark}
\newtheorem{definition}{Definition}

\begin{document}

\title{Leap generators for composition schemes}


\author[\'Eric Fusy and Carine Pivoteau]{\'Eric Fusy$^{*}$ and Carine Pivoteau$^{*}$}
\thanks{$^{*}$LIGM, CNRS, Univ Gustave Eiffel, ESIEE Paris, F-77454 Marne-la-Vall\'ee, France, eric.fusy@univ-eiffel.fr, carine.pivoteau@univ-eiffel.fr.
	}

\begin{abstract}
Leap generators have been introduced in [Duchon et al.'04] for exact-size random generation of structures in a class of the form $\cC=\Seq(\cB)$ (sequence construction), in the supercritical case. We extend these generators to supercritical composition schemes $\cC=\cA\circ\cB$. Compared to the sequence construction, the obtained exact-size random generator for $\cC$ still has linear time complexity (under conditions on the sampling complexity in $\cA$ and $\cB$), but perfect uniformity of the distribution is lost in general. However the distribution on $\cC_n$, called leap distribution, is asymptotically uniform, the total variation distance from the uniform distribution being $(c+o(1))n^{-1/2}$ for an explicit constant $c$. These generators are simple to implement and can be applied to several classes of walks and trees, in particular P\'olya trees. 
Leap generators can also be given for certain critical composition schemes, those relating planar map families, where this time the total variation distance to the uniform distribution is $\sim c\,n^{-1/3}$ for an explicit constant $c$. 
\end{abstract}

\smallskip

\maketitle

\noindent \textbf{Keywords.} Random generation, Boltzmann samplers, composition schemes, non-plane trees, planar maps

\section{Introduction}
A random generation algorithm aims to sample uniformly at random, and as efficiently as possible, an object of a prescribed size $n$ from a combinatorial class (e.g. of trees, permutations, partitions, graphs, maps, ...).
An important motivation is to obtain simulations of large random objects and formulate conjectures about their asymptotic behavior, in particular for parameters that are difficult to analyze. Fast running time (ideally linear) is thus paramount.  
Markov chain Monte Carlo (MCMC) algorithms~\cite{levin2017markov,craiu2026handbook} give a robust and flexible approach to random generation; however their mixing time can be difficult to analyze (and the complexity is often superlinear). 
Regarding combinatorial methods, bijective random samplers are typically fast, but require a  correspondence to a class for which counting and random generation are easier; such generators are known for instance for classes of trees (e.g. via Pr\"ufer code), planar maps~\cite{Schaeffer99}, and  automata~\cite{bassino2007enumeration}.   
On the other hand, two automatic methods of random generation have been developed for classes that admit a decomposition in terms of the basic constructions of symbolic combinatorics~\cite{flajolet2009analytic}. (i) The recursive method~\cite{flajolet1994calculus,denise1999uniform} based on counting coefficients achieves exact-size sampling, but at the cost of superlinear precomputations that limit the size to the range of thousands. (ii) Boltzmann sampling~\cite{duchon2004boltzmann,flajolet2007boltzmann}, based on evaluations of the generating functions at a well-chosen value, often achieves linear-time complexity, however at the cost of a fluctuating output size. For approximate-size sampling, Boltzmann samplers perform very well and make it possible to draw random objects of size in the range of millions. However, in practice it is convenient in simulations to have very large samples at certain sizes (e.g. powers of two) in order to obtain precise histograms of parameters, such as distance parameters, and to measure how the distribution scales with the size, see~\cite[Sec.~3]{barkley2019precision}. In that context Boltzmann samples are not really well suited, as they require heavy rejection to be tuned at an exact size, which often turns the complexity from linear to quadratic. 

To remedy this problem, several techniques have been proposed in order to extract exact-size random samplers of linear-time complexity from Boltzmann samplers, with the challenge of making the overall cost of sampling (including failed attempts) linear. For Galton-Watson trees, Devroye's algorithm~\cite{devroye2012simulating} achieves this by reducing the cost of each failed attempt to $\widetilde{O}(1)$, while requiring $\Theta(\sqrt{n})$ attempts (the final successful attempt having an additional cost $\Theta(n)$). On the other hand,  
several random generation algorithms have been developed with the idea (originating from~\cite{vonNeumann}) of boosting the acceptance probability of the drawn structures, so as to make the probability of success of every attempt $\Omega(1)$; such algorithms have been successfully applied to integer partitions and set partitions~\cite{arratia2016probabilistic} (see also~\cite{carayol} which combines this with Devroye's approach), to structures with a context-free specification~\cite{sportiello}, and very recently to P\'olya trees~\cite{stuflerPolya}. Finally, for classes of the form $\cC=\Seq(\cB)$ (sequence construction) in the so-called supercritical case, a simple ``leap" process~\cite[Sec.7.1]{duchon2004boltzmann} built on a Boltzmann sampler for~$\cB$ makes it possible to have acceptance probability $\Omega(1)$ for each attempt.

In this article, we extend the latter technique of leap generators to the more general setting of a supercritical composition scheme $\cC=\cA\circ\cB$, and show that they still perform in  linear time for exact-size sampling, assuming $\cA$ (resp. $\cB$) has an exact-size (resp. a Boltzmann) sampler whose complexity is linear in the size of the output. In contrast to the sequence construction, perfect uniformity is lost in general, but the obtained leap generators are asymptotically uniform, in the sense that the total variation distance to the uniform distribution is $o(1)$: we prove more precisely that it is $(c+o(1)) n^{-1/2}$ for some explicit $c$.  This is sufficient to observe the limit behavior  of parameters, as limit laws are the same under an asymptotically uniform distribution as under the uniform distribution. Furthermore, via an additional rejection step, which we call acceleration of convergence, we show that the total variation distance to the uniform distribution can be improved to $O(n^{-r})$ for any fixed $r$. 
When applicable, our leap generators are very easy to implement and, like Boltzmann samplers, they make simple random choices based on precomputed values and require very little auxiliary memory.
They can be applied to a variety of structures, notably classes of unlabeled trees such as P\'olya trees and unlabeled phylogenetic trees (i.e, non-plane binary trees), and they extend to certain critical composition schemes such as those of planar maps~\cite{banderier2001random}, where this time the total variation distance to the uniform distribution is $(c+o(1)) n^{-1/3}$ for some explicit $c$. We provide several simulations, inspired by those recently performed in~\cite{bartholdi2024algorithm}, showing the practical efficiency of our approach, and illustrating that the empirical (or exact) laws of parameters ---such as the height of Motzkin walks or of P\'olya trees--- are very close under the leap distribution and under the uniform distribution, and are nearly indistinguishable when using first-order acceleration of convergence.       

We note that the idea of relaxing perfect uniformity to asymptotic uniformity ---in random sampling and analysis of random discrete structures--- already appeared in different contexts. In MCMC generators the uniform distribution at a given size is only reached asymptotically in the number of steps. In practice one performs a number of steps large enough to be close to the uniform distribution. This required number of steps (mixing time) is often hard to analyze, but it can also be validated experimentally, as for instance in the recent random generator~\cite{bartholdi2024algorithm} of P\'olya trees. Regarding the bijective approach, in the context of fixed-genus factorizations of an $n$-cycle into transpositions, an asymptotically uniform bijective random generator  has been developed in~\cite{feray2022random}, and used to prove convergence in law of a chord process associated  to the factorization.   
Asymptotically uniform random samplers have also been developed for graphs with prescribed vertex-degrees~\cite{steger1999generating,kim2003generating,bayati2010sequential}, based on switching operations on multigraphs to get rid of loops and multiple edges. Note that even in the context of theoretically uniform samplers (e.g. Boltzmann samplers), typical practical implementations are not perfectly uniform, as they rely on fixed precision evaluation. For a sufficiently high precision  chosen depending on the output size, this gives an asymptotically negligible bias from the uniform distribution. Finally, in the context of analyzing random discrete structures, it is known that establishing a limit law (e.g. scaling limit) for random structures in an unrooted class can often be reduced to the easier task of establishing the same limit law for an associated rooted class, if one can prove that rooting is asymptotically unbiased in the total variation sense, i.e., that the distribution induced by rooted objects on unrooted objects is asymptotically uniform. This property is known for various families of planar maps, since planar maps with a non-trivial automorphism are exponentially rare~\cite{liskovets81,richmond1995almost}. It has also been recently established for free trees~\cite{stufler2026probabilistic}, where it is shown that the total variation distance between the uniform distribution on free trees and the distribution induced by P\'olya trees is $n^{-a}$ for any $a<1/2$.

\section{Background and tools}\label{sec:background_tools}
\subsection{Symbolic combinatorics}
We recall that a combinatorial class $\cC$ is a set of objects equipped with a size function $\gamma\to |\gamma|\in\mathbb{N}$, such that for each $n\geq 0$ the set $\cC_n=\{\gamma\in\cC\ |\ |\gamma|=n\}$ is finite, each object $\gamma\in\cC_n$ being made of $n$ atoms (e.g. nodes of a tree, steps of a walk...). In the unlabeled setting the atoms carry no label, and the associated (ordinary) generating function is $C(z)=\sum_n |\cC_n|z^n$; while in the labeled setting the atoms carry distinct labels in $[1..n]$ and the associated (exponential) generating function is $C(z)=\sum_n \frac1{n!}|\cC_n|z^n$. 

\subsubsection{Dictionary of symbolic combinatorics}
We use the usual dictionary of symbolic combinatorics~\cite{flajolet2009analytic}, with the two base classes $\mathbf{1}$ and $\mathcal{Z}$ of respective generating functions $z\to 1$ and $z\to z$, and the operators $+$ (disjoint union), $\times$ (product). We will also use the constructions for sequence, cycle, and set of structures, and their restriction with a prescribed number of components, all these constructions being defined both in the labeled and unlabeled setting 
(in the unlabeled setting these constructions allow the same component to appear multiple times).  

Finally, of crucial importance for our method is the substitution operator: for two labeled combinatorial classes $\cA,\cB$ where $\cB$ has no object of size $0$, the \emph{composed class}  
\[
\cC=\cA\circ\cB
\] 
is the set of objects $\gamma$ that are obtained from an object $\alpha\in\cA$ and a sequence  $\beta_1,\ldots,\beta_{|\alpha|}$, where atom $i$ of $\alpha$ is considered as ``substituted"  by $\beta_i$; the size of $\gamma$ is $n:=|\beta_1|+\cdots+|\beta_k|$, and a further relabeling function assigns distinct labels in $[1..n]$ to the atoms (the atom-set of $\gamma$ being the union of the atom-sets from $\beta_1,\ldots,\beta_k$). 
The integer $k$ is called the \emph{core-size} of $\gamma$. In the unlabeled case the substitution operator is well defined if the atoms of objects in $\cA$ are unambiguously distinguishable (e.g. the steps of a walk, or the nodes of a binary tree), so that one can rank the atoms, even without them carrying a label. In both the labeled and unlabeled case (when applicable) the generating functions are related by 
\[
C(z)=A(B(z)).
\] 

\subsubsection{Unlabeled class and symmetry class associated to a labeled class}
We consider labeled combinatorial classes $\cC$ in the usual setting where relabeling\footnote{We refer to~\cite{bergeron1998combinatorial} for a more formal setting to define combinatorial classes and their relabeling action; we adopt here a lighter formalism as in~\cite{flajolet2009analytic}.} the atoms of an object in $\cC_n$ by any permutation in $\mathfrak{S}_n$ yields an object in $\cC_n$, so that $\mathfrak{S}_n$ acts on $\cC_n$. We let $\tcC_n=\cC_n/\mathfrak{S}_n$ be the set of orbits of 
$\cC_n$ under the relabeling action; then $\tcC=\cup_n\tcC_n$ is the \emph{unlabeled class} for $\cC$. 
On the other hand, we let $\Sym(\cC_n)$ be the set of pairs $(\sigma,\gamma)$ in $\mathfrak{S}_n\times \cC_n$ such that $\sigma\cdot\gamma=\gamma$ (in which case $\sigma$ is called an automorphism of $\gamma$). Then the labeled class $\Sym(\cC)=\cup_n\, \Sym(\cC_n)$ is called the \emph{symmetry class} for $\cC$. For $(\sigma,\gamma)\in\Sym(\cC_n)$, the underlying unlabeled object is defined as the orbit $\widetilde{\gamma}\in\tcC_n$ to which $\gamma$ belongs. Burnside's lemma ensures that 
\[
\Sym(\cC_n)\ \simeq\  n!\ \tcC_n, 
\]
i.e., every $\widetilde{\gamma}\in\tcC_n$ is the underlying object for $n!$ pairs in $\Sym(\cC_n)$.
Thus uniform random sampling in $\tcC_n$  reduces to uniform random sampling in $\Sym(\cC_n)$ (in our case we will use this reduction for asymptotically uniform random sampling).

\subsection{Singular behaviors and composition schemes}\label{sec:singular}
Let $\cC$ be a combinatorial class, with $C(z)$ the associated generating function, and $\rho$ the radius of convergence of $C(z)$, assumed to be strictly positive and finite. The \emph{support} of $C(z)$ is the integer set $\{n\geq 0\ |\ [z^n]C(z)\neq 0\}$. 
The generating function $C(z)$ is called \emph{periodic} if there exist $d\geq 2, c\geq 0$ such that the support of $C(z)$ is included in $c+d\,\mathbb{N}$. In that case, with $\omega=e^{2i\pi/d}$, we have $C(z)$ singular at $\rho\,\omega^i$ for all $i\in[0..d-1]$,  so that $\rho$ is not the unique dominant singularity.   
For $s\in \mathbb{R}$ with $-s\notin \mathbb{N}$, we say that $C(z)$ has \emph{singular exponent}  $s$ if $\rho$ is its unique dominant singularity, and there is a non-zero constant $\kappa$ and a polynomial $P(z)$ such that
\begin{equation}\label{eq:sing_exp}
C(z)=P(z)+\kappa(1-z/\rho)^{-s}+O((1-z/\rho)^{-s+1}),
\end{equation}
where the expansion holds in a $\Delta$-neighborhood of $\rho$. In that case, transfer theorems of analytic combinatorics~\cite[Section VI.3]{flajolet2009analytic} ensure that 
\[[z^n]C(z)= \frac{\kappa}{\Gamma(s)}\rho^{-n}n^{s-1}(1+O(1/n)).\]  
For example, the singular exponent for the generating functions of regular languages is typically a positive integer, for tree families it is typically $-1/2$, and for planar map families it is typically $-3/2$. 

For a composition scheme $\cC=\cA\circ\cB$, we let $\rho_A,\rho_B,\rho_C$ be the radii of convergence of the respective generating functions $A(y), B(z), C(z)$. Following~\cite{Go95}, the scheme is called \emph{subcritical} if $\rho_C=\rho_B$ and $A(y)$ is analytic at $B(\rho_C)$ (i.e., the singularity of $C(z)$ is due to $B(z)$), and it is called \emph{supercritical} if $\rho_B>\rho_C$, in which case one must have
 $B(\rho_C)=\rho_A$ (i.e., the singularity of $C(z)$ is due to $A(y)$). 
 It is called \emph{aperiodic} if $A(y),B(z),C(z)$ are aperiodic. 
 A supercritical aperiodic scheme is said to be of singular exponent $s$ if $A(y)$ has singular exponent $s$. 
 In that case, a standard calculation ensures that $C(z)$ is also\footnote{Uniqueness of the dominant singularity of $C(z)$ is granted by the fact that $B(z)$ is aperiodic and by the Daffodil lemma~\cite[Lemma.IV.1]{flajolet2009analytic}.} of singular exponent $s$ and moreover the respective values $\kappa=\kappa_A,\kappa=\kappa_C$ in the singular expansion~\eqref{eq:sing_exp} are related by
 \begin{equation}\label{eq:kappa}
\frac{\kappa_A}{\kappa_C}=\mu^s\ \ \mathrm{where}\ \mu=\frac{\rho B'(\rho)}{B(\rho)}.
\end{equation} 

\begin{remark}
Random structures in a class $\cC=\cA\circ\cB$ are studied in a probabilistic setting under the name of Gibbs partitions~\cite{pitman}, i.e., random partitions $\pi=B_1\cup\cdots\cup B_k$ of $[1..n]$ with weight proportional to $v_k\prod_{j=1}^k w_{|B_j|}$, for some nonegative fixed weight-vectors $(v_1,v_2\ldots)$ and $(w_1,w_2,\ldots)$. 
The connection with labeled composition schemes is done by $v_k=|\cA_k|/k!$ and $w_j=|\cB_j|$.
The behavior of random Gibbs partitions is known~\cite{stufler2018gibbs,stufler2024gibbs,Go95} to differ markedly between the subcritical case and the supercritical case.  
In the subcritical case, the partition typically has $\Theta(1)$ parts, the largest one of size $n-O(1)$; while in the supercritical case the partition typically has $\Theta(n)$ parts whose sizes (asymptotically) behave as independent copies of a random variable having an  exponentially decaying tail.    
\end{remark}

\begin{definition}
 A composition scheme is called \emph{admissible} if it is supercritical, 
  aperiodic, and the support of $A(y)$ contains $\mathbb{N}^*$. 
\end{definition}

\subsection{Boltzmann samplers}
For an unlabeled (resp. a labeled) combinatorial class $\cC$, with generating function $C(z)$,  and for $x>0$ such that $C(x)$ converges,  the \emph{Boltzmann distribution} $\mathbb{P}_x^{\,\cC}$ is the probability distribution on $\cC$ given by
\begin{equation}
\mathbb{P}_x^{\,\cC}(\gamma)=\frac1{C(x)}x^{|\gamma|}\ \mathrm{(unlabeled\ case)},\ \ \ \ \ \mathbb{P}_x^{\,\cC}(\gamma)=\frac1{C(x)}\frac{x^{|\gamma|}}{|\gamma|!}\ \mathrm{(labeled\ case).}
\end{equation}
For $\gamma$ drawn under $\mathbb{P}_x^{\,\cC}$, the expectation $\mu(x)$ and standard deviation $\sigma(x)$ of the random variable $|\gamma|$ are given by
\begin{equation}\label{eq:mu_sigma}
\mu(x)=\frac{xC'(x)}{C(x)},\ \ \ \ \sigma(x)=\sqrt{x\mu'(x)}.
\end{equation}

A Boltzmann sampler~\cite{duchon2004boltzmann} is a procedure $\Gamma C(x)$ that draws a random object in $\cC$ under the distribution $\mathbb{P}_x^{\,\cC}$, for any fixed $x>0$ such that $C(x)$ converges. It is said to be of \emph{linear time complexity} if the runtime of every generation is linearly bounded by the size of the output object. 

Explicit sampling rules allow to obtain linear time Boltzmann samplers for combinatorial classes that are recursively specified from the base classes in terms of  the standard constructions. These rules are described in~\cite{duchon2004boltzmann} and complemented in~\cite{Fusy09} (substitution operator) and in~\cite{flajolet2007boltzmann,bodirsky2011boltzmann} (set and cycle constructions in the unlabeled setting).

\subsection{Asymptotically uniform random sampling}
For a finite set $E$ and two probability distributions $\pi,\pi'$ on $E$, the total variation distance between $\pi$ and $\pi'$ is defined as 
\[
\dtv(\pi,\pi'):=\frac1{2}\sum_{x\in E}|\pi(x)-\pi'(x)|.
\]
For $\cC=\cup_n\cC_n$ a combinatorial class, a random generator $\Gamma C[n]$ on $\cC_n$ is called \emph{asymptotically uniform} if the total variation distance between the uniform distribution $\pi_n$ on $\cC_n$ and the distribution $\pi_n'$ induced by $\Gamma C[n]$ satisfies $\dtv(\pi_n,\pi_n')=o(1)$. 

\begin{remark}\label{rk:asympt_uniform_parameter}
If we consider a parameter $X=X_n$ on $\cC_n$ (i.e., a function from $\cC_n$ to $\mathbb{R}$), and let $\pi_{X,n}$ (resp. $\pi'_{X,n}$) be the law of
 $X_n$ under $\pi_n$ (resp. under $\pi_n'$), then we obviously have $\dtv(\pi_{X,n},\pi'_{X,n})\leq \dtv(\pi_{n},\pi_{n}')$. Hence, if $\dtv(\pi_n,\pi_n')=o(1)$ then $\dtv(\pi_{X,n},\pi_{X,n}')=o(1)$, possibly with smaller order of magnitude.
\end{remark}

\section{Leap generators}
\subsection{Description of the generator}
We consider an admissible composition scheme
\[
\cC=\cA\circ \cB,
\]
and assume we have an exact-size uniform random generator $\Gamma A[k]$ for the class $\cA$, and a Boltzmann sampler $\Gamma B(x)$ for the class $\cB$.  
Letting $\rho$ be the radius of convergence of $C(z)$, the \emph{leap generator} $\Gammal C[n]$ proceeds as follows (see also Figure~\ref{fig:leap}).

\begin{figure}
\begin{center}
\includegraphics[width=10cm]{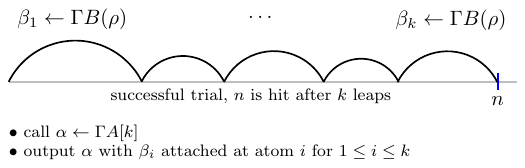}
\end{center}
\caption{The leap generator process.}
\label{fig:leap}
\end{figure} 

At each trial we generate a sequence $(\beta_i)_{i\geq 1}$ of objects in $\cB$ one by one,  each object being drawn (independently) by a call to $\Gamma B(\rho)$, and we stop at the largest~$k$ such that the total size $S:=|\beta_1|+\cdots+|\beta_k|$ is at most $n$. If $S<n$ the trial fails and we restart. 
If $S=n$, we call  $\alpha\leftarrow \Gamma A[k]$, and for $i\in[1..k]$ we substitute the $i$th atom of $\alpha$ by $\beta_i$. 
We return the composed object $\gamma\in\cC_n$ thus obtained (if we are in the labeled setting, randomly permuted labels in $[1..n]$ are assigned to the atoms of $\gamma$).  

\begin{remark}
In the special case $\cA=\Seq$, the leap generator for $\cC=\Seq(\cB)$ is exactly the leap generation process for $\cC$ as described in~\cite[Section 7.1]{duchon2004boltzmann}. In that case it gives a uniform random generator on $\cC_n$.
\end{remark}

\subsection{Analysis of the generator}
We denote respectively by $\mu$ and $\sigma$ the expectation and standard deviation of the size of $|\Gamma B(\rho)|$, as given by~\eqref{eq:mu_sigma}. 
\begin{prop}\label{prop:linear}
Let $q_n$ be the probability of success at each trial in $\Gammal C[n]$. Then
\begin{equation}\label{eq:estimate_qn}
q_n=[z^n]\frac{1}{1-B(\rho z)/B(\rho)}=\frac1{\mu}+O(r^n)\ \mathrm{for\ some\ }r\in(0,1).
\end{equation}

If the exact-size random generator $\Gamma A[k]$ has time complexity $O(k)$ and the Boltzmann sampler has linear time-complexity, then the expected time complexity of $\Gammal C[n]$ is $O(n)$. 
\end{prop}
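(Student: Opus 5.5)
The plan is to view a trial as a renewal process. The sizes $X_1,X_2,\ldots$ of the successive objects drawn by $\Gamma B(\rho)$ are i.i.d. Each has generating function $\mathbb{E}[z^{X}]=B(\rho z)/B(\rho)$, since in both the labeled and unlabeled case the Boltzmann probability of producing size $j$ is $[z^j]B(\rho z)/B(\rho)$. The trial succeeds exactly when some partial sum $S_k=X_1+\cdots+X_k$ equals $n$. Because $X_i\geq 1$ ($\cB$ has no object of size $0$), the events $\{S_k=n\}$ for different $k$ are disjoint, and we get
\[
q_n=\sum_{k\geq 0}\mathbb{P}(S_k=n)=[z^n]\sum_{k\ge0}\Big(\frac{B(\rho z)}{B(\rho)}\Big)^k=[z^n]\frac{1}{1-B(\rho z)/B(\rho)}.
\]
Here the $k=0$ term only contributes at $n=0$.

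For the asymptotics, set $F(z)=B(\rho z)/B(\rho)$. It has radius of convergence $\rho_B/\rho>1$, by supercriticality. We have $F(1)=1$ and $F'(1)=\rho B'(\rho)/B(\rho)=\mu$. Since $F$ has nonnegative coefficients, $|F(z)|<1$ for $|z|<1$. On $|z|=1$ with $z\neq 1$, aperiodicity of $B$ gives $|F(z)|<1$ by the usual Daffodil-lemma argument (strict triangle inequality unless all supported powers align). So $1-F(z)$ has, in a disk $|z|<R$ with some $R>1$, a unique zero at $z=1$. This zero is simple because $F'(1)=\mu\neq 0$. By continuity and compactness we may shrink $R>1$ so that $1-F$ has no other zeros in $|z|\le R$. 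Subtracting the pole gives
\[
\frac{1}{1-F(z)}-\frac{1/\mu}{1-z},
\]
which is analytic in $|z|<R$. Cauchy's estimates then yield $q_n=1/\mu+O(r^n)$ for any $r\in(1/R,1)$. The main obstacle is this step of ruling out other zeros on and slightly beyond the unit circle. The careful points are the aperiodicity argument on $|z|=1$ and the compactness argument for extending to an annulus.

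For the complexity, the trial count is geometric with success probability $q_n$, which is bounded below by a positive constant for all large $n$; for small $n$ we have $q_n>0$ since the support of $B$ generates all integers and $q_n\ge [z^n]F(z)^k$ for suitable $k$. Hence the expected number of trials is $O(1)$. Each trial stops once the cumulative drawn size exceeds $n$. Its cost is therefore at most linear in $n$ plus the size of the last (overshooting) object. That object's size has a bounded expectation uniformly in $n$: the overshoot of a renewal process with exponentially decaying step tails has an exponentially decaying tail. Alternatively, the sampler can be aborted as soon as the size exceeds $n$, which makes the cost $O(n)$ deterministically given a size-monitored sampler. In the successful trial, $\Gamma A[k]$ costs $O(k)=O(n)$, and the substitution and relabeling cost $O(n)$. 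Using independence between the number of trials and their individual costs (Wald's identity), the expected total time is $O(1)\cdot O(n)=O(n)$.
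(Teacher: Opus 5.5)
Your proposal is correct and follows essentially the same route as the paper: the same sum over $k$ giving $q_n=[z^n]\frac{1}{1-B(\rho z)/B(\rho)}$, the Daffodil lemma isolating a unique simple pole at $z=1$ with residue $1/\mu$, and the rejection identity (expected total cost equals expected cost of one trial divided by $q_n$). One wording fix: that identity is what Wald's identity gives because the number of trials is a stopping time, not because it is independent of the trial costs, which it is not.

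The only variation is that you bound the overshooting object $\beta_{k+1}$ by an $O(1)$ expectation, instead of the paper's $\tilde S\le n+M_{n+1}$ with $\mathbb{E}(M_n)=O(\log n)$. This is valid: $\beta_{k+1}$ is the object straddling $n$ rather than the overshoot itself, but $\mathbb{P}(|\beta_{k+1}|=j)\le j\,\mathbb{P}(|\Gamma B(\rho)|=j)$ gives the bound directly.
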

\begin{proof}
We give the proof for $q_n$ in the unlabeled case (the labeled case proof is very similar). 
A trial is a success if it generates a sequence $(\beta_1,\ldots,\beta_k)$ of total size $n$. For each given such sequence, the probability that it is generated is $\prod_{i=1}^k\mathbb{P}_{\rho}^{\cB}(\beta_i)$, with $\mathbb{P}_{\rho}^{\cB}(\beta)=\rho^{|\beta|}/B(\rho)$.  
Hence the probability that the sequence is generated is $\rho^n/B(\rho)^k$. The total contribution of all structures of size $n$ in $\cB^k$ is thus  $\rho^n[z^n]B(z)^k/B(\rho)^k=[z^n]B(\rho z)^k/B(\rho)^k$. Summing over $k\geq 0$ we find that the total contribution (which is $q_n$) is $[z^n]Q(z)$, 
with  $Q(z):=\frac{1}{1-B(\rho z)/B(\rho)}$, as claimed. Since the generating function $B(z)$ is aperiodic, by the Daffodil lemma the maximum value of  $|B(z)|$ on $|z|\leq \rho$ is uniquely attained at $z=\rho$. Thus the meromorphic function $Q(z)$ has a unique pole on $|z|\leq 1$, at $z=1$.  Moreover, we clearly have
\[
\lim_{z\to 1}(1-z)Q(z)=\frac{B(\rho)}{\rho B'(\rho)}=\frac1{\mu}.
\] 
Transfer theorems~\cite[Section VI.3]{flajolet2009analytic} thus ensure that $q_n=\frac1{\mu}+O(r^n)$ for some $r\in(0,1)$ (taking $r$ such that $Q(z)$ has no pole $\neq 1$ on the disk $|z|\leq 1/r$).

Regarding the complexity statement, by a standard formula for rejection sampling, the overall expected cost of $\Gammal C[n]$ is equal to the expected cost of one trial divided by $q_n$. Since $q_n\to 1/\mu$, it remains to show that the expected cost of one trial is $O(n)$. We use the above notation for $k,S,\beta_i$ in the description of $\Gammal C[n]$. Let $\tilde{S}:=S+|\beta_{k+1}|$.  
Since the samplers $\Gamma A[k]$ and $\Gamma B(x)$ have linear time complexity,  the expected cost of one trial is $O(\mathbb{E}(\tilde{S}))+O(k)$, the second term given by the possible call to $\Gamma A[k]$. By definition of the stopping time, $S\leq n$ and 
$\tilde{S}>n$. Note that $k\leq n$ (as $\cB$ has no object of size $0$). 
Hence, letting $M_n:=\mathrm{max}(|\beta_1|,\ldots,|\beta_{n}|)$, we have
$\tilde{S}\leq n+M_{n+1}$.  Since $|\Gamma B(\rho)|$ has an  exponentially decaying tail, we classically have $\mathbb{E}(M_n)=O(\log(n))$, 
so that $\mathbb{E}(\tilde{S})\sim n$. Hence the expected cost of one trial is $O(n)$.  \end{proof}

The probability distribution on $\cC_n$ given by $\Gammal C[n]$ is called the \emph{leap distribution} on $\cC_n$ and is denoted by $\pi_n'$, while the uniform distribution on $\cC_n$ is denoted by $\pi_n$.  We also use the notation $\delta_n$ to be $1$ in the unlabeled setting and $1/n!$ in the labeled setting.

\begin{remark}
It is also possible to perform the leap process a single time, without rejection: one then calls $\Gamma A[k]$ with $k$ number of leaps done before stopping. With the notation used to describe $\Gammal C[n]$, the obtained structure has a random size $Z_n=S\leq n$ very close to $n$. Indeed, letting $Z$ be the size of $\Gamma B(\rho)$, for any fixed $i\geq 0$ one has 
\[
\mathbb{P}(n-Z_n=i)=q_{n-i}\ \!\mathbb{P}(Z>i)\sim \frac1{\mu}\mathbb{P}(Z>i).
\] 
Thus $n-Z_n$ converges to the discrete law $p(i)=\frac1{\mu}\mathbb{P}(Z>i)$, which has an exponential tail.  
Moreover, the distribution induced on any fixed size $n' \leq n$ is the leap distribution at that size.  
\end{remark}


\begin{lem}\label{lem:distort}
Let $a_k=[y^k]A(y)=\delta_k|\cA_k|$ and $c_n=[z^n]C(z)=\delta_n|\cC_n|$. Let $\cC_{n,k}$ be the set of structures in $\cC_n$ having core-size $k$.  
Then for every $\gamma\in\cC_{n,k}$ one has
\begin{equation}\label{eq:pinprime}
\pi_n'(\gamma)=\frac{1}{|\cC_n|}d_{n,k},\ \ \mathrm{with}\ \ d_{n,k}:=\frac1{q_n}\frac{c_n\rho^n}{a_kB(\rho)^k}.
\end{equation} 
\end{lem}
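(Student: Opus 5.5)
We compute $\pi_n'(\gamma)$ directly from the description of $\Gammal C[n]$, treating the unlabeled case first and then indicating how the labeled case changes. Since the generator is a rejection scheme, $\pi_n'(\gamma)$ equals the probability that a single trial outputs $\gamma$, divided by the success probability $q_n$ of Proposition~\ref{prop:linear}. So it suffices to show that, for $\gamma\in\cC_{n,k}$, one trial outputs $\gamma$ with probability $\rho^n/(B(\rho)^k|\cA_k|)$ in the unlabeled case, and with probability $\rho^n/(n!\,B(\rho)^k\,k!\,a_k)$ in the labeled case.

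\emph{Unlabeled case.} Fix $\gamma\in\cC_{n,k}$. By definition of the substitution it corresponds to a unique pair $(\alpha,(\beta_1,\ldots,\beta_k))$ with $\alpha\in\cA_k$ and $\sum|\beta_i|=n$. This uses that the atoms of $\alpha$ are canonically ranked; we assume, as the paper does implicitly, that the decomposition is unambiguous. A trial outputs $\gamma$ exactly when the following three things happen:
\begin{itemize}
\item[(i)] the first $k$ Boltzmann draws are $\beta_1,\ldots,\beta_k$, which has probability $\prod_i \rho^{|\beta_i|}/B(\rho)=\rho^n/B(\rho)^k$;
\item[(ii)] the process stops after $k$ draws, which is automatic once $S=n$, since the next object has positive size and would overshoot (and it cannot stop earlier, as partial sums are $<n$);
\item[(iii)] $\Gamma A[k]$ returns $\alpha$, which has probability $1/|\cA_k|=1/a_k$.
\end{itemize}
These events are independent, so the trial outputs $\gamma$ with probability $\rho^n/(B(\rho)^k a_k)$. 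Dividing by $q_n$ and writing $1=c_n/|\cC_n|$ (as $\delta_n=1$) gives exactly $\frac{1}{|\cC_n|}d_{n,k}$.

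\emph{Labeled case.} This is the delicate part: we need the bookkeeping of labels and factorials to match $\delta$. Fix $\gamma\in\cC_{n,k}$ and count the ways a trial produces it. The trial draws labeled $\beta_i$, each with probability $\rho^{|\beta_i|}/(|\beta_i|!\,B(\rho))$. It draws $\alpha$ uniformly in $\cA_k$, with probability $1/(k!\,a_k)$. Finally it assigns a uniform relabeling, i.e.\ one of $n!/\prod|\beta_i|!$ equally likely ways to distribute the labels $[1..n]$ consistently with the order-preserving relabeling convention.

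Given $\gamma$, its decomposition determines the labeled $\alpha$, the blocks, and the relabeled $\beta_i$. The probability of producing $\gamma$ is then
\[
\frac{\rho^n}{B(\rho)^k\prod|\beta_i|!}\cdot\frac{1}{k!\,a_k}\cdot\frac{\prod|\beta_i|!}{n!}=\frac{\rho^n}{n!\,k!\,a_k\,B(\rho)^k}.
\]
The one subtlety is that the composed object built from $\alpha$ and the sequence is labeled with the components indexed by atoms of $\alpha$. Permuting the labels of $\alpha$ together with the order of the $\beta$'s could produce the same $\gamma$, and this would contribute a factor $k!$. I expect this to be the main obstacle: one must fix the exact convention of the relabeling in the substitution (each $\gamma$ arises from exactly $k!$ labeled pairs, matching $|\cC_n|=n!c_n$ with $c_n=\sum_k a_k[z^n]B^k$), so that the $k!$ cancels.

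With that convention the per-trial probability becomes $\rho^n/(n!\,a_kB(\rho)^k)$. Since $n!=c_n^{-1}\cdot c_n n!$ and $|\cC_n|=n!\,c_n$, this equals $\frac{1}{|\cC_n|}\frac{c_n\rho^n}{a_kB(\rho)^k}$. Dividing by $q_n$ yields~\eqref{eq:pinprime}.

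As a consistency check, summing $\pi_n'$ over $\cC_n$ recovers $q_n=\sum_k a_k\rho^n[z^n]B^k/(a_kB(\rho)^k)$, in agreement with Proposition~\ref{prop:linear}.
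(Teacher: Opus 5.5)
Your proposal is correct and is essentially the paper's proof. You use the rejection identity $\pi_n'(\gamma)=\pi_n^{\mathrm{first}}(\gamma)/q_n$ and the weight $\rho^n/B(\rho)^k$ of the leap sequence times $1/a_k$ for the core. In the labeled case you count over the $k!$ orderings of the $\cB$-components of $\gamma$; the paper pairs $k!\prod_i|\beta_i|!$ orderings with a uniform permutation of $[1..n]$, while you pair $k!$ orderings with a uniform order-consistent shuffle, and the two give the same output law because the labeled Boltzmann distribution is invariant under relabeling.

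Do clean up the labeled case, though. The target $\rho^n/(n!\,B(\rho)^k\,k!\,a_k)$ stated in your first paragraph, and the claim that the decomposition of $\gamma$ determines the labeled $\alpha$, are both off by exactly this factor $k!$. That expression is the probability of one ordered outcome. The object $\gamma$ is produced by exactly $k!$ distinct outcomes, one per ordering of its components, each with its own $\alpha$ and assignment of label blocks. So no convention needs to be chosen to obtain $\rho^n/(n!\,a_kB(\rho)^k)$.
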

\begin{proof}
Letting $\pi_n^{\mathrm{first}}(\gamma)$ be the probability that $\gamma$ is generated at the first trial, we clearly have 
\[
\pi_n'(\gamma)=\sum_{i\geq 1}\pi_n^{\mathrm{first}}(\gamma)\,(1-q_n)^{i-1}=\frac{\pi_n^{\mathrm{first}}(\gamma)}{q_n},
\]
where the $i$th term is the probability that $\gamma$ is generated at the $i$th trial (after failure of the first $i-1$ trials). 

In the unlabeled case, let $\alpha\in\cA_k$ and $(\beta_1,\ldots,\beta_k)\in(\cB^k)_n$ be the structures composing $\gamma\in\cC_n$. Then the probability that the leap process outputs $\beta_1,\ldots,\beta_k$ is $\rho^n/B(\rho)^k$ (as seen in the proof of Proposition~\ref{prop:linear}), and then the probability that $\alpha$ is drawn from $\Gamma A[k]$ is $1/a_k$. Hence
\[
\pi_n^{\mathrm{first}}(\gamma)=\frac1{a_k}\frac{\rho^n}{B(\rho)^k}.
\]
In the labeled case, for any given ordering (among $k!$ possibilities) of the $\cB$-components of $\gamma$, and for any ordering of the atoms among each of the $\cB$-components, we let $\alpha$ be the induced structure in $\cA_k$ (its atoms are labeled according to the ordering of the components). And let $\beta_1,\ldots,\beta_k$ be the $\cB$-components, where the atoms of each $\beta_i$ are labeled in $[1..|\beta_i|]$ according to their prescribed ordering. The probability that the leap process outputs $\beta_1,\ldots,\beta_k$ is equal to $\prod_{i=1}^k\frac1{B(\rho)}\frac{\rho^{|\beta_i|}}{|\beta_i|!}$. Then the probability that $\alpha$ is chosen by $\Gamma A[k]$ is equal to $1/|\cA_k|$. 
And finally the probability that the relabeling function assigns the correct labels to the atoms of $\gamma$ is $1/n!$. Since the number of possibilities for the orderings is $k!\prod_{i=1}^k|\beta_i|!$, we conclude that 
$\pi_n^{\mathrm{first}}(\gamma)=\frac1{n!}\frac{k!}{|\cA_k|}\prod_{i=1}^k\frac1{B(\rho)}\rho^{|\beta_i|}=\frac1{n!}\frac1{a_k}\frac{\rho^n}{B(\rho)^k}$.
\end{proof}
\begin{remark}\label{rk:pinp_coresize}
The quantity $d_{n,k}$, which is the ratio $\pi_n'(\gamma)/\pi_n(\gamma)$ for objects $\gamma\in\cC_{n,k}$, is called the \emph{distortion factor}. 
For a given large $n$, it tends to increase with $k$ when the singular exponent $s$ satisfies $s<1$, and it tends to decrease with $k$ when $s>1$. 
\end{remark}

\begin{lem}\label{lem:dtv}
Let $c_{n,k}=a_k[z^n]B(z)^k=\delta_n|\cC_{n,k}|$.  
Then
\begin{equation*}
\dtv(\pi_n,\pi_n')=\frac1{2}\sum_{k=1}^nS_{n,k},
\end{equation*}
where
\begin{align}
S_{n,k}&=\frac{c_{n,k}}{c_n}\left| 1 - d_{n,k} \right|\label{eq:Snk1}
\end{align}
\end{lem}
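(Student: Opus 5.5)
The plan is to start from the definition of total variation distance and group the objects of $\cC_n$ according to their core-size, using Lemma~\ref{lem:distort}. Since every object of $\cC_n$ has a core-size $k\in[1..n]$, the sets $\cC_{n,k}$ for $1\leq k\leq n$ partition $\cC_n$. Here we use that $\cB$ has no object of size $0$, so $k\leq n$, and that $n\geq 1$, so $k\geq 1$. Hence
\[
\dtv(\pi_n,\pi_n')=\frac12\sum_{k=1}^n\sum_{\gamma\in\cC_{n,k}}|\pi_n(\gamma)-\pi_n'(\gamma)|.
\]

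For $\gamma\in\cC_{n,k}$ we have $\pi_n(\gamma)=1/|\cC_n|$, and by Lemma~\ref{lem:distort} we have $\pi_n'(\gamma)=d_{n,k}/|\cC_n|$. The summand therefore equals $\frac{1}{|\cC_n|}|1-d_{n,k}|$, which is constant over $\cC_{n,k}$. The inner sum is then $\frac{|\cC_{n,k}|}{|\cC_n|}|1-d_{n,k}|$.

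It remains to identify $|\cC_{n,k}|/|\cC_n|$ with $c_{n,k}/c_n$. Since $c_n=\delta_n|\cC_n|$ and $c_{n,k}=\delta_n|\cC_{n,k}|$ with the same normalisation $\delta_n$, the factor $\delta_n$ cancels in the ratio. The only point needing justification is the identity $\delta_n|\cC_{n,k}|=a_k[z^n]B(z)^k$. This is the standard fact that the generating function of $\cC$ refined by core-size is $A(uB(z))$: the objects of core-size $k$ are counted by $a_kB(z)^k$. In the unlabeled case this follows directly from the definition of substitution. In the labeled case it follows from the exponential formula for substitution, $\frac{|\cA_k|}{k!}B(z)^k$, exactly as in the bookkeeping of orderings in the proof of Lemma~\ref{lem:distort}.

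Combining these gives $S_{n,k}$ as in~\eqref{eq:Snk1}. There is no real obstacle; the only mildly delicate step is the labeled-case counting identity for $c_{n,k}$, and it is standard.
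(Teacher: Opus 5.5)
Your proof is correct and follows the paper's own argument. You partition $\cC_n$ by core-size, use Lemma~\ref{lem:distort} to see that $|\pi_n(\gamma)-\pi_n'(\gamma)|=|1-d_{n,k}|/|\cC_n|$ is constant on $\cC_{n,k}$, and let $\delta_n$ cancel in $|\cC_{n,k}|/|\cC_n|=c_{n,k}/c_n$. Your added justification of $\delta_n|\cC_{n,k}|=a_k[z^n]B(z)^k$, which the paper simply takes as part of the statement, is a harmless extra.
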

\begin{proof}
By~\eqref{eq:pinprime} we have $\dtv(\pi_n,\pi_n')=\frac1{2}\sum_{k=1}^nS_{n,k}$, where
\begin{align*}
S_{n,k}&=|\cC_{n,k}|\cdot\left| \frac1{|\cC_n|}-  d_{n,k}\frac1{|\cC_n|}\right|\\
&=\frac{|\cC_{n,k}|}{|\cC_n|}\cdot\left| 1 - d_{n,k} \right|\\
&=\frac{c_{n,k}}{c_n}\left| 1 - d_{n,k} \right|.
\end{align*}
\end{proof}

We will now prove that the leap generator is asymptotically uniform. The first step is to have estimates on the two factors $c_{n,k}/c_n$ and $|1-d_{n,k}|$ in $S_{n,k}$.  

\begin{lem}\label{lem:estimate_cnk}
Recall the notation $\mu,\sigma$ for the expectation and standard deviation of $|\Gamma B(\rho)|$.
Let $X_n$ be the random variable such that $\mathbb{P}(X_n=k)=\frac{c_{n,k}}{c_n}$. Then the  estimate
\begin{equation}
\label{eq:local_limit}
\mathbb{P}\Big(X_n=n/\mu+t\,\sigma\sqrt{n/\mu^3}\Big)\sim \frac{\sqrt{\mu^3}}{\sigma\sqrt{n}}\frac{1}{\sqrt{2\pi}}e^{-t^2/2}
\end{equation}
holds uniformly for $t$ in any fixed compact set.

Moreover, the following tail bound holds: there exist positive constants $\kappa,\xi,\zeta$ 
such that, for $n\geq 0$ and $0\leq t\leq \zeta\sqrt{n}$,
\begin{equation}
\label{eq:large_dev}
\mathbb{P}\Big(\big|X_n-n/\mu\big|\geq t\,\sigma\sqrt{n/\mu^3}\Big)\leq \xi\,e^{-\kappa\,t^2}.
\end{equation}
\end{lem}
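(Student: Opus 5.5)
Let $Y_1,Y_2,\ldots$ be i.i.d.\ copies of $|\Gamma B(\rho)|$, with $T_k=Y_1+\cdots+Y_k$ and $\mathbb{P}(Y=j)=[z^j]B(\rho z)/B(\rho)$. The plan is to express $\mathbb{P}(X_n=k)$ through the local behaviour of random walks with these increments.

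By the computation in the proof of Proposition~\ref{prop:linear}, $[z^n]B(\rho z)^k/B(\rho)^k=\mathbb{P}(T_k=n)$. Hence
\[
\mathbb{P}(X_n=k)=\frac{c_{n,k}}{c_n}=\frac{a_k B(\rho)^k\rho^{-n}}{c_n}\,\mathbb{P}(T_k=n).
\]
The first factor is handled by singularity analysis. Since $A$ has singular exponent $s$ at $\rho_A=B(\rho)$, we get $a_kB(\rho)^k\sim\frac{\kappa_A}{\Gamma(s)}k^{s-1}$ and $c_n\rho^n\sim\frac{\kappa_C}{\Gamma(s)}n^{s-1}$. For $k=n/\mu+O(\sqrt n)$, relation~\eqref{eq:kappa} gives
\[
\frac{a_kB(\rho)^k\rho^{-n}}{c_n}\sim\frac{\kappa_A}{\kappa_C}\,\mu^{1-s}=\mu .
\]
The second factor comes from the local limit theorem for the aperiodic lattice walk $T_k$. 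The increments have all moments, since their tail is exponential because $\rho<\rho_B$, and they are aperiodic because $B$ is. Thus, uniformly,
\[
\mathbb{P}(T_k=n)=\frac{1}{\sigma\sqrt{2\pi k}}\exp\!\Big(-\frac{(n-k\mu)^2}{2k\sigma^2}\Big)+o(k^{-1/2}).
\]
Substituting $k=n/\mu+t\sigma\sqrt{n/\mu^3}$ gives $n-k\mu=-t\sigma\sqrt{n/\mu}$ and $k\sim n/\mu$, so the exponent tends to $-t^2/2$. The product then becomes $\mu\cdot\frac{\sqrt{\mu}}{\sigma\sqrt{2\pi n}}e^{-t^2/2}$, which is~\eqref{eq:local_limit}. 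Uniformity on compact sets is inherited from the uniformity of both estimates.

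The tail bound~\eqref{eq:large_dev} is the main obstacle, because it needs control of the prefactor for $k$ far from $n/\mu$, including small $k$. I would use the following steps.

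\textbf{Prefactor.} Since the support of $A$ contains $\mathbb{N}^*$ and $a_kB(\rho)^k\sim Ck^{s-1}$, we have $a_kB(\rho)^k\le C'k^{|s|+1}$ for all $k\ge1$. Combined with $c_n\rho^n\asymp n^{s-1}$ and $k\le n$, the prefactor is at most polynomial, $O(n^{m})$ for some fixed $m$.

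\textbf{Chernoff bound.} The increments have exponential moments near $0$, so there exists $\kappa_0>0$ with $\mathbb{P}(T_k=n)\le\mathbb{P}(|T_k-k\mu|\ge|n-k\mu|)\le 2e^{-\kappa_0 (n-k\mu)^2/k}$ when $|n-k\mu|\le \eta k$, and $\le 2e^{-\kappa_0|n-k\mu|}$ otherwise.

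\textbf{Absorbing the polynomial.} In the moderate regime, $|k-n/\mu|=u\sqrt n$ with $u\le \zeta'\sqrt n$. For $k\ge n/(2\mu)$ the chosen $\zeta$ keeps us in the Gaussian range of the Chernoff bound, and the polynomial factor is bounded there: indeed $a_kB(\rho)^k/(c_n\rho^n)$ stays bounded for $k\asymp n$ by the asymptotics above. For $k<n/(2\mu)$ the bound is $e^{-cn}$, which beats $n^m$. Summing the pointwise bounds $e^{-cu^2}n^{-1/2}$ over $k$ with $|k-n/\mu|\ge t\sigma\sqrt{n/\mu^3}$ gives $\xi e^{-\kappa t^2}$, with the $e^{-cn}$ pieces absorbed since $t\le\zeta\sqrt n$ implies $e^{-cn}\le e^{-\kappa t^2}$ for suitable constants. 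Small $n$ are handled by enlarging $\xi$.
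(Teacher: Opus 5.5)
Your proof of the local limit estimate \eqref{eq:local_limit} is correct, and it takes a different route from the paper. The paper applies the quasi-power theorem to $A(uB(z))$. You instead factor $\mathbb{P}(X_n=k)=\frac{a_kB(\rho)^k}{c_n\rho^n}\,\mathbb{P}(T_k=n)$, apply Gnedenko's local limit theorem to the aperiodic walk $T_k$, and evaluate the prefactor using $\kappa_A/\kappa_C=\mu^s$. This is more elementary and makes the constant transparent.

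\textbf{The gap is in the tail bound \eqref{eq:large_dev}}, at the step ``summing the pointwise bounds $e^{-cu^2}n^{-1/2}$''. Your Chernoff estimate bounds the point probability $\mathbb{P}(T_k=n)$ by the tail probability $\mathbb{P}(|T_k-k\mu|\ge|n-k\mu|)$. Combined with the bounded prefactor, this gives only $\mathbb{P}(X_n=k)\le M e^{-cu^2}$, with no factor $n^{-1/2}$. The local limit theorem cannot supply that factor either, since its $o(k^{-1/2})$ error has no Gaussian decay in $u$. There are about $\sqrt{n}$ integers $k$ per unit of $u$, so summing the bound you actually have yields only $\min\big(1,O(\sqrt{n}\,e^{-c't^2})\big)$. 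This is not $\le\xi e^{-\kappa t^2}$ uniformly in $n$. For $t^2$ of order $\log n$ your bound is only $O(1)$, whereas $\xi e^{-\kappa t^2}$ is a negative power of $n$.

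There are two ways to close the gap.
\begin{itemize}
\item \emph{Local large-deviation bound.} Prove $\mathbb{P}(T_k=n)\le Ck^{-1/2}e^{-c(n-k\mu)^2/k}$ for $|n-k\mu|\le\eta k$. This follows from exponential tilting plus a uniform Gaussian bound on the tilted characteristic functions, which again uses aperiodicity of $B$.
\item \emph{Avoid pointwise bounds (simpler).} Since $\cB$ has no object of size $0$, $Y_i\ge 1$, so $T_k$ is strictly increasing and the events $\{T_k=n\}$ are disjoint. Hence $\sum_{k\ge K}\mathbb{P}(T_k=n)\le\mathbb{P}(T_K\le n)$ and $\sum_{k\le K'}\mathbb{P}(T_k=n)\le\mathbb{P}(T_{K'}\ge n)$. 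Take $K,K'$ at $n/\mu\pm t\sigma\sqrt{n/\mu^3}$ and bound the prefactor by a constant on $[n/(2\mu),n]$. One Chernoff bound for each of $T_K$ and $T_{K'}$ then gives $\xi e^{-\kappa t^2}$ directly for $t\le\zeta\sqrt{n}$ with $\zeta$ small. Your $e^{-cn}$ argument still handles $k<n/(2\mu)$.
\end{itemize}

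For comparison, the paper sidesteps this issue by applying the Chernoff argument to $X_n$ itself. It uses $p_n(u)\le\tilde\xi\,(\rho(u)/\rho)^{-n}$ for real $u$ near $1$, read off from the uniform singular estimate of $[z^n]A(uB(z))$.
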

\begin{proof}
The probability generating function $p_n(u)=\sum_k \mathbb{P}(X_n=k)u^k$ of $X_{n}$ is given by 
\[
p_n(u)=\frac{[z^n]C(z,u)}{[z^n]C(z)},\ \ \mathrm{with}\ C(z,u):=A(uB(z)).
\] 
Let $\rho(u)$ be the radius of convergence of $z\to C(z,u)$. Recall that $B(z)$ is analytic at $\rho$, with $B(\rho)=\rho_A$. Hence, $\rho(u)$ is given by the equation $B(\rho(u))=\rho_A/u$ for $u$ in a neighborhood of $1$. Moreover, by singularity analysis, we have the singular expansion
\begin{equation}\label{eq:coeff_Czu}
[z^n]C(z,u)\sim\frac{\kappa(u)}{\Gamma(s)}\,\rho(u)^{-n}\,n^{s-1}
\end{equation}
uniformly for $u$ in a neighborhood of $1$, with $\kappa(u)$ analytic at $1$.  
The conditions of the quasi-power theorem in its local limit formulation~\cite[Theo.IX.14]{flajolet2009analytic} are satisfied: defining 
\[
\mathfrak{e}=-\frac{\rho'(1)}{\rho},\ \ \ \mathfrak{s}^2=-\frac{\rho''(1)}{\rho}-\frac{\rho'(1)}{\rho}+\frac{\rho'(1)^2}{\rho^2},
\]
we have the Gaussian estimate
\[
\mathbb{P}\Big(X_n=\mathfrak{e}\,n+t\,\mathfrak{s}\,\sqrt{n}\Big)\sim \frac{1}{\sqrt{n}}\frac1{\mathfrak{s}\sqrt{2\pi}}e^{-t^2/2}
\]
uniformly for $t$ in any fixed compact set. Moreover, from the equation $B(\rho(u))=\rho_A/u$ it is easy to check that $\mathfrak{e}=1/\mu$ and $\mathfrak{s}^2=\sigma^2/\mu^3$. 

Regarding the tail bound, we can establish it classically by a Chernoff bound, with calculations similar to those for the sum of independent random variables, see e.g.~\cite[Theo.15]{petrov2012sums}. From the estimate~\eqref{eq:coeff_Czu} there exists a real neighborhood $U_1$ of $1$ and a constant $\tilde{\xi}$ such that, for all $n\geq 0$ and $u\in U_1$,
\[
p_n(u)\leq \tilde{\xi}\, \big(\rho(u)/\rho\big)^{-n}.
\] 
Since $p_n(u)=\mathbb{E}(e^{sX_n})$ for $u=e^s$, there is a real neighborhood $U_0$ of $0$ such that, for all $n\geq 0$ and $s\in U_0$,
\[
\mathbb{E}\big(e^{s(X_n-n/\mu)}\big)\leq \tilde{\xi}\,\exp(n\,g(s)),\ \ \mathrm{with}\ g(s)=-\log(\rho(e^s))+\log(\rho)-s/\mu.
\]
The function $g(s)$ satisfies $g(0)=g'(0)=0$, so that in a real neighborhood $[-\tilde{\zeta},\tilde{\zeta}]$ of $0$  we have $g(s)\leq \tilde{\kappa} s^2$ for some $\tilde{\kappa}>0$. 
Thus, for $n\geq 0$ and $s\in[-\tilde{\zeta}\sqrt{n},\tilde{\zeta}\sqrt{n}]$, 
\[
\mathbb{E}(e^{s(X_n-n/\mu)/\sqrt{n}})\leq \tilde{\xi}\,\exp(\tilde{\kappa} s^2)
\]
and in that case Markov's inequality gives 
\[
\mathbb{P}\big(|X_n-n/\mu|\geq t\sqrt{n}\big)\leq 2\,\tilde{\xi}\,\exp(\tilde{\kappa} s^2-ts)
\]
for any $t>0$. If $t\leq 2\tilde{\kappa}\,\tilde{\zeta}\,\sqrt{n}$, then the optimal value $s=\frac{t}{2\tilde{\kappa}}$
 satisfies $s\leq \tilde{\zeta}\sqrt{n}$, so we have
 \[
\mathbb{P}\big(|X_n-n/\mu|\geq t\sqrt{n}\big)\leq 2\,\tilde{\xi}\,\exp(-t^2/(4\tilde{\kappa})).
\]
The tail bound estimate follows, taking $\xi=2\tilde{\xi}$, $\kappa=\frac{1}{4\tilde{\kappa}}\frac{\sigma^2}{\mu^3}$, and $\zeta=2\tilde{\kappa}\,\tilde{\zeta}\frac{\sqrt{\mu^3}}{\sigma}$.
\end{proof}

\begin{lem}\label{lem:estimate_dnk}
Let $\epsilon_n=o(\sqrt{n})$. 
For $k=\frac{n}{\mu}+t\frac{\sigma}{\mu^{3/2}}\sqrt{n}$ such that $|t|\leq \epsilon_n$, one has
\begin{equation}\label{eq:estimate_dnk}
1-d_{n,k}=\frac{\sigma}{\sqrt{n\mu}}(s-1)\,t+O((1+t^2)/n).
\end{equation}
\end{lem}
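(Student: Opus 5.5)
The plan is to expand each factor of $d_{n,k}=\frac1{q_n}\frac{c_n\rho^n}{a_kB(\rho)^k}$ using the transfer estimates of Section~\ref{sec:singular}, and then Taylor-expand the logarithm. By Proposition~\ref{prop:linear}, $1/q_n=\mu+O(r^n)$. Since $C(z)$ has singular exponent $s$ with constant $\kappa_C$, we have $c_n\rho^n=\frac{\kappa_C}{\Gamma(s)}n^{s-1}(1+O(1/n))$. Since $A(y)$ has singular exponent $s$ with radius $\rho_A=B(\rho)$ and constant $\kappa_A$, we have $a_kB(\rho)^k=\frac{\kappa_A}{\Gamma(s)}k^{s-1}(1+O(1/k))$. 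Here $k\geq n/(2\mu)$ for $n$ large, because $|t|\le\epsilon_n=o(\sqrt n)$, so $O(1/k)=O(1/n)$. Combining these with~\eqref{eq:kappa}, $\kappa_C/\kappa_A=\mu^{-s}$, gives
\[
d_{n,k}=\mu\cdot\mu^{-s}\Big(\frac{n}{k}\Big)^{s-1}(1+O(1/n))=\Big(\frac{n}{\mu k}\Big)^{s-1}(1+O(1/n)).
\]

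Next substitute $k=\frac{n}{\mu}(1+h)$ with $h:=t\sigma\mu^{-1/2}n^{-1/2}$. Then $\frac{n}{\mu k}=(1+h)^{-1}$ and $d_{n,k}=(1+h)^{1-s}(1+O(1/n))$. Since $|t|\le\epsilon_n=o(\sqrt n)$, we have $h=o(1)$ uniformly, so the Taylor expansion $(1+h)^{1-s}=1-(s-1)h+O(h^2)$ applies with a uniform constant. As $h^2=O(t^2/n)$, this yields
\[
1-d_{n,k}=(s-1)h+O(h^2)+O(1/n)=\frac{\sigma}{\sqrt{n\mu}}(s-1)t+O((1+t^2)/n),
\]
which is~\eqref{eq:estimate_dnk}. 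The $O(1/n)$ error from the product is absorbed as $(1+O(h))O(1/n)$.

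The main point needing care is uniformity. The transfer estimate for $a_k$ is an asymptotic in $k$, and we apply it with $k$ varying with both $n$ and $t$. This is harmless because the $O(1/k)$ remainder is a fixed function of $k$ alone and $k\asymp n$ uniformly over the allowed range of $t$. Two smaller checks remain. First, $k$ must be an integer, so only values of $t$ for which $k\in\mathbb{N}$ matter. Second, the hypothesis that the support of $A(y)$ contains $\mathbb{N}^*$ ensures $a_k\neq0$, so $d_{n,k}$ is well defined. The exponentially small correction in $q_n$ is trivially absorbed into the $O(1/n)$ term.
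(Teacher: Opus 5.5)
Your proposal is correct and follows essentially the same route as the paper. Both write $c_n\rho^n/(a_kB(\rho)^k)=\frac{\kappa_C}{\kappa_A}(k/n)^{1-s}(1+O(1/n))$ via the transfer estimates, use $\kappa_C/\kappa_A=\mu^{-s}$ and $1/q_n=\mu+O(r^n)$, and Taylor-expand $(1+h)^{1-s}$ with $h=t\sigma/\sqrt{n\mu}$; your explicit uniformity remarks (that $k\asymp n$ over the whole range $|t|\le\epsilon_n$, so the $O(1/k)$ remainder is $O(1/n)$) simply make precise what the paper leaves implicit.
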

\begin{proof}
We have, for $k=\Theta(n)$,
\[
\frac{c_n\rho^n}{a_kB(\rho)^k}=\frac{\kappa_C\, n^{s-1}\,\big(1+O(1/n)\big)}{\kappa_A\, k^{s-1}\,\big(1+O(1/k)\big)}=\frac{\kappa_C}{\kappa_A}\Big(\frac{k}{n}\Big)^{1-s}\,\big(1+O(1/n)\big).
\]
For $\frac{k}{n}=\frac1{\mu}\big(1+t\frac{\sigma}{\sqrt{n\mu}}\big)$ with $|t|\leq \epsilon_n$, this gives, using $\frac{\kappa_C}{\kappa_A}=\mu^{-s}$ (as seen in~\eqref{eq:kappa}),
\[
\frac{c_n\rho^n}{a_kB(\rho)^k}=\frac{1}{\mu}\Big(1+\frac{\sigma(1-s)t}{\sqrt{n\mu}}\Big)+O((1+t^2)/n).
\]
Combined with~\eqref{eq:estimate_qn}, this yields~\eqref{eq:estimate_dnk}.
\end{proof}

\begin{prop}\label{prop:asympt_uniform}
Let $\cC=\cA\circ\cB$ be an admissible composition scheme of singular exponent $s$.  
Then
\begin{equation}\label{eq:dtvasympt}
\dtv(\pi_n,\pi_n')\sim\frac{|s-1|\sigma}{\sqrt{2\pi\mu}}\frac1{\sqrt{n}}.
\end{equation}
\end{prop}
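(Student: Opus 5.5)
Starting from Lemma~\ref{lem:dtv}, we have $\dtv(\pi_n,\pi_n')=\frac12\sum_k \mathbb{P}(X_n=k)\,|1-d_{n,k}|$, with $X_n$ as in Lemma~\ref{lem:estimate_cnk}. Parametrize $k=n/\mu+t\,\sigma\sqrt{n/\mu^3}$. On the central region $|t|\le T$ (with $T$ fixed, later sent to infinity) we combine the local limit estimate~\eqref{eq:local_limit} with Lemma~\ref{lem:estimate_dnk}, which gives $|1-d_{n,k}|=\frac{\sigma|s-1|}{\sqrt{n\mu}}|t|+O(1/n)$. The values of $t$ are spaced by $\Delta t=\mu^{3/2}/(\sigma\sqrt n)$, so the central sum is a Riemann sum:
\[
\frac12\sum_{|t|\le T}\frac{\Delta t}{\sqrt{2\pi}}e^{-t^2/2}\,\frac{\sigma|s-1|}{\sqrt{n\mu}}|t|\,(1+o(1))
=\frac{\sigma|s-1|}{\sqrt{n\mu}}\Big(\frac12\int_{-T}^{T}\frac{|t|e^{-t^2/2}}{\sqrt{2\pi}}dt+o(1)\Big).
\]
Since $\frac12\int_{\mathbb{R}}|t|e^{-t^2/2}dt/\sqrt{2\pi}=1/\sqrt{2\pi}$, letting $T\to\infty$ slowly yields the constant $\frac{|s-1|\sigma}{\sqrt{2\pi\mu}}$. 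This requires $s\neq1$; if $s=1$ the statement should be read as $o(n^{-1/2})$, which the same argument gives.

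It remains to show that the tails $|t|>T$ contribute $o(n^{-1/2})$ uniformly as $T\to\infty$. For this I split into two ranges.

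\textbf{Moderate range} $T<|t|\le \epsilon_n$, with $\epsilon_n=n^{1/4}$ say. Lemma~\ref{lem:estimate_dnk} still applies and gives $|1-d_{n,k}|=O((1+|t|)/\sqrt n)$. Combined with the tail bound~\eqref{eq:large_dev}, summing by dyadic shells in $t$ bounds the contribution by $O(n^{-1/2}\sum_{j}(2^jT)\,e^{-\kappa 4^{j}T^2})=o(n^{-1/2})$ as $T\to\infty$.

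\textbf{Far range} $|t|>\epsilon_n$. Here the expansion of $d_{n,k}$ is no longer available, and I expect this to be the main obstacle. I would avoid estimating $d_{n,k}$ directly and use $|1-d_{n,k}|\le 1+d_{n,k}$. Summing $\frac{c_{n,k}}{c_n}(1+d_{n,k})$ over a set $K$ of core-sizes gives $\mathbb{P}(X_n\in K)+\pi_n'(\text{core-size}\in K)$. The first term is bounded by~\eqref{eq:large_dev} with $t=\epsilon_n$ (valid since $\epsilon_n\le\zeta\sqrt n$), giving $O(e^{-\kappa\sqrt n})$. For the second term, under the leap distribution the core-size is the number $k$ of leaps conditioned on $S=n$, so
\[
\pi_n'(\text{core-size}\in K)\le q_n^{-1}\,\mathbb{P}\big(\text{number of i.i.d.\ Boltzmann sizes with partial sum }n\text{ lies in }K\big).
\]
The summands have exponential tails, so a Chernoff bound for sums of i.i.d.\ variables (as in~\cite{petrov2012sums}) makes this probability $O(e^{-c\sqrt n})$ when $|k-n/\mu|\ge \epsilon_n\sqrt n$. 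Since $q_n\to1/\mu$, the whole far-range contribution is exponentially small.

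Combining the three ranges gives~\eqref{eq:dtvasympt}. The delicate points to check are the uniformity of the $O(1/n)$ error terms in the moderate range, and the Chernoff step for $\pi_n'$.
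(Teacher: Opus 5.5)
Your proof is correct. The central window and the intermediate range follow the paper's argument exactly: you take a Riemann sum on $|t|\le T$ using the local limit estimate and Lemma~\ref{lem:estimate_dnk}, then control shells in $t$ via~\eqref{eq:large_dev}. Your cutoff $\epsilon_n=n^{1/4}$ plays the role of the paper's $\log(n)^2$.

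You differ from the paper in the far range. The paper uses the uniform bound $d_{n,k}=O(n^{(s-1)_+})$ and lets the superpolynomially small Gaussian tail at $|t|=\log(n)^2$ absorb the polynomial factor. That bound rests on $a_k\rho_A^k\ge c\,k^{s-1}$ for all $k\ge 1$, i.e.\ on the support condition in admissibility together with the asymptotics of $a_k$. You never bound $d_{n,k}$ outside the window. Instead you use the exact identity $\sum_{k\in K}\frac{c_{n,k}}{c_n}d_{n,k}=\pi_n'(\text{core-size}\in K)$ and control the right-hand side through the leap process.

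The paper's route is a one-line step inside the analytic framework. Yours is more robust: it needs no information on $a_k$ for small or atypical $k$, only exponential tails of the Boltzmann size and $q_n$ bounded below.

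The Chernoff step you flag as delicate is fine. Write $S_k$ for the partial sums of the leap sizes, so that $\pi_n'(\text{core-size}=k)=\mathbb{P}(S_k=n)/q_n$. The events $\{S_k=n\}$ are disjoint and $S_k$ is strictly increasing in $k$, so
\[
\sum_{k\le k_0}\mathbb{P}(S_k=n)\le\mathbb{P}(S_{k_0}\ge n),\qquad \sum_{k\ge k_1}\mathbb{P}(S_k=n)\le\mathbb{P}(S_{k_1}\le n).
\]
Hence one Chernoff bound per side suffices, with deviation of order $n^{3/4}$ and $k\le n$. This gives $O(e^{-c\sqrt n})$ without any union bound over $k$.
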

\begin{proof}
For $y>0$ let  
\[
\Int_n(y):=\Big[n/\mu-y\,\sigma\sqrt{n/\mu^3},\,n/\mu+y\,\sigma\sqrt{n/\mu^3}\Big]\cap\Big[1..n\Big].
\]  
Let $I_n:=\Int_n(\log(n)^2)$, and $J_n:=[1..n]\backslash I_n$, so we have
\[
\dtv(\pi_n,\pi_n')=\frac1{2}\sum_{k\in I_n}S_{n,k}+\frac1{2}\sum_{k\in J_n}S_{n,k}.
\]  
We first show that $\sum_{k\in J_n}S_{n,k}$ is asymptotically negligible.
By~\eqref{eq:large_dev} and the fact that $d_{n,k}=O(n^{(s-1)_+})$, we have
\[
\sum_{k\in J_n}S_{n,k}=O\Big(n^{(s-1)_+}\sum_{k\in J_n}\frac{c_{n,k}}{c_n}\Big)
=O\big(n^{|s-1|}e^{-\kappa\log(n)^2}\big)=O(n^{-b})\ \mathrm{for\ any\ }b>0.
\]

We now consider the contribution $\sum_{k\in I_n}S_{n,k}$. 
Let $m>0$. Then from~\eqref{eq:large_dev} and~\eqref{eq:estimate_dnk} we obtain, for $k=\frac{n}{\mu}+t\,\sigma\sqrt{n/\mu^3}$,
\[
S_{n,k}\sim \frac{\mu\,|s-1|}{\sqrt{2\pi}\,n}|t|\,e^{-t^2/2},
\]
uniformly over $k\in I_n(m)$, so that  
\begin{align*}
\sum_{k\in I_n(m)}S_{n,k}\sim \frac{\mu\,|s-1|}{\sqrt{2\pi}\,n}\sum_{k\in I_n(m)}|t|\,e^{-t^2/2}& \sim \frac{\mu\,|s-1|}{\sqrt{2\pi}\,n}\frac{\sigma\sqrt{n}}{\sqrt{\mu^3}}\,\int_{-m}^m |t|\,e^{-t^2/2}\,\mathrm{d}t\\
& = \frac{\sigma\,|s-1|}{\sqrt{2\pi\,\mu\,n}}\,\int_{-m}^{m} |t|\,e^{-t^2/2}\,\mathrm{d}t
\end{align*}

On the other hand, for $0\leq m\leq \log(n)^2$, by Lemma~\ref{lem:estimate_cnk} and Lemma~\ref{lem:estimate_dnk} we have the existence of universal constants such that
\[
\sum_{k\notin I_n(m)}\frac{c_{n,k}}{c_n}=O(e^{-\kappa m^2}),\ \ \ |1-d_{n,k}|=O\big(m/\sqrt{n}\big)\ \mathrm{for}\ k\in I_n(m)\backslash I_n(m+1)
\]
Hence, for $0\leq m\leq \log(n)^2$, we have
\[
\sum_{k\in I_n(m+1)\backslash I_n(m)}S_{n,k}=\frac1{\sqrt{n}}O(m\,e^{-\kappa m^2}),
\]
By summation we obtain 
\[
\sum_{k\in I_n\backslash I_n(m)}S_{n,k}=\frac1{\sqrt{n}}O(m\,e^{-\kappa m^2})
\]
Hence, for every $m\geq 1$
\[
\sum_{k\in I_n}S_{n,k}=(1+o(1))\frac1{\sqrt{n}} \frac{\sigma\,|s-1|}{\sqrt{2\pi\,\mu}}\,\int_{-m}^{m} |t|\,e^{-t^2/2}\,\mathrm{d}t + \frac1{\sqrt{n}}O(m\,e^{-\kappa m^2}),
\]
where the little $o$ depends on $m$, but not the big $O$. 
Letting $m$ get large, we obtain

\[
\sum_{k\in I_n}S_{n,k}\sim\frac1{\sqrt{n}} \frac{\sigma\,|s-1|}{\sqrt{2\pi\,\mu}}\,\int_{\mathbb{R}} |t|\,e^{-t^2/2}\,\mathrm{d}t=2\frac{|s-1|\sigma}{\sqrt{2\pi\mu}}\frac1{\sqrt{n}}.
\]
\end{proof}

\begin{remark}
Note that the value $s=1$ plays a special role, it is indeed the unique singular exponent where $\dtv(\pi_n,\pi_n')$ is not $\Theta(1/\sqrt{n})$ but smaller. This is consistent with Remark~\ref{rk:pinp_coresize}, and with the fact that $\dtv(\pi_n,\pi_n')=0$ when $\cA=\mathrm{Seq}$ (a case where $s=1$ since $A(y)=\frac{1}{1-y}$)
 as covered in~\cite[Section 7.1]{duchon2004boltzmann}.   
\end{remark}

\begin{remark}
If we only have less precise estimates $[y^k]A(y)\sim \kappa_A\,\rho_A^{-k}\,k^{s-1}$ and 
$[z^n]C(z)\sim \kappa_C\,\rho_C^{-n}\,n^{s-1}$, we still have asymptotic uniformity. 
Indeed, for $k\in I_n$, 
\[
d_{n,k}=\frac1{q_n}\frac{c_n\rho^n}{a_kB(\rho)^k}\sim\mu\frac{\kappa_C\, n^{s-1}}{\kappa_A\,k^{s-1}}\sim\,\mu^s\frac{\kappa_C}{\kappa_A}\sim 1,
\]
so that 
$\sum_{k\in I_n}S_{n,k}=\sum_{k\in I_n}\frac{c_{n,k}}{c_n}|1-(1+o(1)|=o(1)$.

\end{remark}

\subsection{Acceleration of convergence}
We show here that a simple rejection step on top of leap generators makes it possible to accelerate the rate of convergence toward the uniform distribution. 
A \emph{rejection sequence} is given by $w_{n,k}\in[0,1]$, for $0\leq k\leq n$.  
Given such a sequence, and an admissible composition scheme $\cC=\cA\circ\cB$, the \emph{rejection leap generator} for $\cC$ proceeds as follows:

\medskip

\begin{tabular}{ll}
$\Gammal C^{\mathrm{\,rej}}[n]$:& repeat \\
&\hspace{.4cm} $\gamma\leftarrow\Gammal C[n]$ \\
&\hspace{.4cm} let $k=$ core-size of $\gamma$\\
& until $\mathrm{Bern}(w_{n,k})$\\
& return $\gamma$
\end{tabular}

\medskip
\medskip

\begin{lem}\label{lem:estimate_dnk_rej}
Let $W_n:=\sum_{k}\frac{c_{n,k}}{c_n}d_{n,k}w_{n,k}$, and  let $\pi_n^{\mathrm{rej}}$ be the distribution on $\cC_n$ given by $\Gammal C^{\mathrm{\,rej}}[n]$. 

Then, for $\gamma\in\cC_{n,k}$, we have
\[
\pi_n^{\mathrm{rej}}(\gamma)=\frac1{\cC_n}d_{n,k}^{\mathrm{rej}}\ \ \ \ \mathrm{with}\ d_{n,k}^{\mathrm{rej}}:=\frac1{W_n}d_{n,k}w_{n,k}.
\]
Moreover,
\[
\dtv(\pi_n,\pi_n^{\mathrm{rej}})=\frac1{2}\sum_{k=0}^n\frac{c_{n,k}}{c_n}\big|1-d_{n,k}^{\mathrm{rej}}\big|.
\]
\end{lem}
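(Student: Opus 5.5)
The plan is to treat $\Gammal C^{\mathrm{\,rej}}[n]$ as a rejection sampler layered on top of $\Gammal C[n]$, and to reuse Lemma~\ref{lem:distort} and the argument of Lemma~\ref{lem:dtv} directly. Each iteration of the repeat loop is independent. In one iteration, the probability that $\gamma\in\cC_{n,k}$ is output and accepted is $\pi_n'(\gamma)\,w_{n,k}$. By Lemma~\ref{lem:distort}, this equals $\frac{1}{|\cC_n|}d_{n,k}w_{n,k}$.

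Next I compute the acceptance probability of one iteration. Summing over $\gamma\in\cC_n$ and grouping by core-size gives
\[
\sum_k |\cC_{n,k}|\frac{1}{|\cC_n|}d_{n,k}w_{n,k}=\sum_k\frac{c_{n,k}}{c_n}d_{n,k}w_{n,k}=W_n.
\]
Here $|\cC_{n,k}|/|\cC_n|=c_{n,k}/c_n$, because the factor $\delta_n$ cancels. As in the proof of Lemma~\ref{lem:distort}, summing the geometric series over the iteration at which acceptance first occurs gives
\[
\pi_n^{\mathrm{rej}}(\gamma)=\sum_{i\geq1}(1-W_n)^{i-1}\frac{d_{n,k}w_{n,k}}{|\cC_n|}=\frac{1}{|\cC_n|}\frac{d_{n,k}w_{n,k}}{W_n}.
\]
This is the claimed formula, with $d^{\mathrm{rej}}_{n,k}=d_{n,k}w_{n,k}/W_n$. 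It requires $W_n>0$, i.e.\ some $w_{n,k}>0$ with $c_{n,k}>0$. This is implicitly assumed, since otherwise the generator never terminates, and I would state it explicitly.

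For the total variation formula, I follow the proof of Lemma~\ref{lem:dtv} verbatim. Split $\frac12\sum_{\gamma\in\cC_n}|\pi_n(\gamma)-\pi_n^{\mathrm{rej}}(\gamma)|$ according to core-size. On $\cC_{n,k}$ every term equals $\frac{1}{|\cC_n|}|1-d^{\mathrm{rej}}_{n,k}|$, so the block contributes $\frac{c_{n,k}}{c_n}|1-d^{\mathrm{rej}}_{n,k}|$. Summing over $k$ from $0$ to $n$ gives the formula; the $k=0$ term vanishes since $c_{n,0}=0$ for $n\geq 1$.

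There is no real obstacle here. The only points needing care are the positivity of $W_n$ and the cancellation of the normalisation $\delta_n$ in the labeled case. (Note also that the statement's $\frac{1}{\cC_n}$ should read $\frac{1}{|\cC_n|}$.)
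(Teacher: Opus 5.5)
Your proposal is correct and follows the paper's proof essentially step by step. You compute the per-trial probability $\pi_n'(\gamma)w_{n,k}$, sum it over $\gamma\in\cC_n$ grouped by core-size to identify the acceptance probability $W_n$, and normalise by $W_n$ via the geometric series as in Lemma~\ref{lem:distort}; your explicit derivation of the total variation formula along the lines of Lemma~\ref{lem:dtv}, and your remarks on $W_n>0$ and on the typo $\frac{1}{\cC_n}$ for $\frac{1}{|\cC_n|}$, fill in details the paper leaves implicit.
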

\begin{proof}
For $\gamma\in\cC_{n}$ of core-size $k$, let $\pi_n^{\mathrm{trial}}(\gamma)$ be the probability of drawing $\gamma$ at a given trial. 
Then
\[
\pi_n^{\mathrm{trial}}(\gamma)=\pi_n'(\gamma)w_{n,k}.
\]
The probability of success at each trial, denoted $W_n$, is thus the sum of these probabilities over all $\gamma\in\cC_n$, so that 
\[
W_n=\sum_{\gamma\in\cC_n} \pi_n^{\mathrm{trial}}(\gamma)=\sum_{k=0}^n\frac{|\cC_{n,k}|}{|\cC_n|}d_{n,k}w_{n,k}=\sum_{k=0}^n\frac{c_{n,k}}{c_n}d_{n,k}w_{n,k}.
\]
Then we have, for $\gamma\in\cC_n$,
\[
\pi_n^{\mathrm{rej}}(\gamma)=\frac1{W_n}\pi_n^{\mathrm{trial}}(\gamma),
\]
which gives the claimed formula.
\end{proof}

Our aim here is to exhibit easy-to-compute rejection sequences giving asymptotic uniformity at a faster rate of convergence than in Proposition~\ref{prop:asympt_uniform}. We say that  a generating function $F(z)$ of singular exponent $s$ has a \emph{complete asymptotic expansion} 
if there exist coefficients $f_\infty^{1},f_\infty^{2},\ldots$ such that, for every $r\geq 0$,
\[
[z^n]F(z)=\frac{\kappa_F}{\Gamma(s)}\rho^{-n}n^{s-1}\Big(1+\sum_{i=1}^{r}f_\infty^{i}n^{-i}+O\big(1/n^{r+1}\big)\Big)
\]
Let $\cC=\cA\circ\cB$ be an admissible composition scheme such that $A(y)$ has a complete asymptotic expansion (an assumption to be satisfied by all examples to be given in Section~\ref{sec:appli}). Then $C(z)$ also has a complete asymptotic expansion, whose coefficients are obtained from those of $A(y)$ and from the derivatives of $B(z)$ at $\rho$. Then we have the following generalization of Lemma~\ref{lem:estimate_dnk}.

\begin{lem}\label{lem:estimate_dnk_general}
Let $\cC=\cA\circ\cB$ be an admissible composition scheme of singular exponent~$s$, and such that $A(y)$ has a complete asymptotic expansion. Let $\epsilon_n=o(\sqrt{n})$. Then there exist explicit polynomials $p_\infty^1(t),p_\infty^2(t),\ldots$, with $p_\infty^i(t)$ of degree at most~$i$, and having only even (resp. odd) powers of $t$ for $i$ even (resp. odd), such that, for $k=\frac{n}{\mu}\Big(1+t\frac{\sigma}{\sqrt{\mu n}}\Big)$ with $|t|\leq \epsilon_n$, one has
\begin{equation}\label{eq:asympt_dnk}
d_{n,k}=1+\sum_{i=1}^r p_\infty^i(t)\,n^{-i/2}+O\left(\Big(\frac{1+t^2}{n}\Big)^{\frac{r+1}{2}}\right)
\end{equation}
for any fixed $r\geq 0$. 

The first polynomials are, with $a_\infty^i$ the coefficients in the asymptotic expansion of $A(y)$, and $c_\infty^i$ those of $C(z)$: 
\[
p_\infty^1(t)=-\frac{\sigma}{\sqrt{\mu}} (s-1)t,\ \ \ 
p_\infty^2(t)=\frac{(s-1)s\sigma^2}{2\mu}t^2+c_\infty^1-a_\infty^1\mu,
\]
\[
p_\infty^3(t)=\frac{s(1-s^2)\sigma^3}{6\,\mu^{3/2}}t^3+\Big(\sigma\sqrt{\mu}\,s\,a_\infty^1 -(s\!-\!1)\frac{\sigma}{\sqrt{\mu}}\,c_\infty^1\Big)\,t.
\]
\end{lem}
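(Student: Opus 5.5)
We will follow the route of Lemma~\ref{lem:estimate_dnk}, now keeping every order of the expansion. Start from the definition $d_{n,k}=\frac1{q_n}\frac{c_n\rho^n}{a_kB(\rho)^k}$ and use three inputs. First, $B(\rho)=\rho_A$ (supercriticality), so $a_kB(\rho)^k=a_k\rho_A^k$. Second, $1/q_n=\mu+O(r^n)$ by~\eqref{eq:estimate_qn}; the exponentially small error is absorbed in every error term. Third, the complete asymptotic expansions of $A$ and $C$, with $\kappa_C/\kappa_A=\mu^{-s}$ from~\eqref{eq:kappa}. Together these give, for $k=\Theta(n)$,
\[
d_{n,k}=\mu^{1-s}\Big(\frac{k}{n}\Big)^{1-s}\,
\frac{1+\sum_{i=1}^{R}c_\infty^i n^{-i}+O(n^{-R-1})}{1+\sum_{i=1}^{R}a_\infty^i k^{-i}+O(k^{-R-1})}.
\]

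Next substitute $\mu k/n=1+h$, where $h:=t\sigma/\sqrt{\mu n}$. Since $|t|\le\epsilon_n=o(\sqrt n)$, we have $h\to0$ uniformly. We then expand the three factors in powers of $h$ and $1/n$.

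The first factor is $(1+h)^{1-s}=\sum_j\binom{1-s}{j}h^j$, truncated after $r$ terms with remainder $O(|h|^{r+1})$. Note that $h^j=t^j(\sigma/\sqrt{\mu})^jn^{-j/2}$.

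The second factor, $1+\sum_i c_\infty^in^{-i}$, contributes only integer powers of $n^{-1}$, with no $t$-dependence.

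The third factor is $1/(1+\sum_ia_\infty^ik^{-i})$. Write $k^{-i}=\mu^in^{-i}(1+h)^{-i}$ and expand $(1+h)^{-i}$ in powers of $h$. Then invert the series in the usual way.

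Multiplying out, each monomial has the form $t^jn^{-j/2}\cdot n^{-m}$ with $j,m\ge0$. Collect the terms of total order $n^{-i/2}$, that is, those with $j+2m=i$. The coefficient of $n^{-i/2}$ is then a polynomial $p_\infty^i(t)$ containing only powers $t^j$ with $j\le i$ and $j\equiv i \pmod 2$. This is exactly the claimed degree and parity structure. The coefficients are explicit in $s$, $\mu$, $\sigma$, $a_\infty^\cdot$ and $c_\infty^\cdot$.

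Computing the first three orders recovers the stated formulas:
\begin{itemize}
\item $p_\infty^1(t)=(1-s)\frac{\sigma}{\sqrt\mu}t$ comes from $j=1$.
\item $p_\infty^2(t)$ combines $\binom{1-s}{2}\frac{\sigma^2}{\mu}t^2=\frac{s(s-1)\sigma^2}{2\mu}t^2$ with $c_\infty^1-\mu a_\infty^1$.
\item $p_\infty^3(t)$ combines $\binom{1-s}{3}h^3$, the cross term $(1-s)h\cdot c_\infty^1/n$, and the term $a_\infty^1\mu n^{-1}(1+h)^{-1}$ expanded together with $(1+h)^{1-s}$. The resulting linear coefficient is $\big(\sigma\sqrt\mu\,s\,a_\infty^1-(s-1)\frac{\sigma}{\sqrt\mu}c_\infty^1\big)$.
\end{itemize}
These checks are routine bookkeeping.

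The main care point is the uniform error term $O\big(((1+t^2)/n)^{(r+1)/2}\big)$ over the whole range $|t|\le\epsilon_n$, where $t$ may be unbounded. I would choose $R\ge r$ in the expansions of $A$ and $C$. For every discarded monomial $t^jn^{-j/2-m}$ with $j+2m\ge r+1$, I would bound it by $(t^2/n)^{j/2}n^{-m}\le((1+t^2)/n)^{(j+2m)/2}$. Because $(1+t^2)/n\to0$, each such monomial is $O(((1+t^2)/n)^{(r+1)/2})$. The Taylor remainders in $h$ and the geometric-series remainder in the inversion are controlled in the same way, since $|h|$ and $1/k$ are uniformly small. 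The same holds for the $O(n^{-R-1})$ and $O(k^{-R-1})$ terms, because $k\ge n/(2\mu)$ for $n$ large. Only finitely many terms are involved for fixed $r$, so the bound is uniform.
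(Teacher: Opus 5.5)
Your proposal is correct and follows essentially the same route as the paper. Both write $d_{n,k}$ as $(\mu k/n)^{1-s}$ times the ratio of the complete expansions of $C$ and $A$, using $1/q_n=\mu+O(r^n)$ and $\kappa_C/\kappa_A=\mu^{-s}$, then substitute $k=\frac{n}{\mu}(1+h)$ and expand in powers of $n^{-1/2}$. Your bound of each discarded monomial $t^jn^{-j/2-m}$ by $\big((1+t^2)/n\big)^{(j+2m)/2}$ just makes explicit the uniformity over $|t|\le\epsilon_n$ that the paper leaves implicit; your choice $R\ge r$ is more than needed, since the paper's $\lfloor r/2\rfloor$ terms in each expansion already suffice.
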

\begin{proof}
Let $m=\lfloor r/2\rfloor$. We have, for $k=\Theta(n)$,
\[
\frac{c_n\rho^n}{a_kB(\rho)^k}=
\frac{\kappa_C}{\kappa_A}\Big(\frac{k}{n}\Big)^{1-s}\Big(1+\sum_{i=1}^mc_\infty^i n^{-i}+O(n^{-m-1})\Big)\Big(1+\sum_{i=1}^ma_\infty^i k^{-i}+O(k^{-m-1})\Big)^{-1}
\]
Injecting $k=\frac{n}{\mu}\big(1+t\frac{\sigma}{\sqrt{n\mu}}\big)$ and expanding according to $n$ yields~\eqref{eq:asympt_dnk}, valid for $|t|\leq\epsilon_n$. 
\end{proof}

This leads to the following generalization of Proposition~\ref{prop:asympt_uniform}. 
\begin{prop}\label{prop:asympt_uniform_general}
Let $\cC=\cA\circ\cB$ be an admissible composition scheme of singular exponent $s$, and such that $A(y)$ has a complete asymptotic expansion. 

Let $a\in(0,1)$ (e.g. $a=1/2$) and let $r\in\mathbb{N}$. With the notation of Lemma~\ref{lem:estimate_dnk_general}, let $y_n^{(r)}(t)=1+\sum_{i=1}^r p_\infty^i(t)\,n^{-i/2}$. 
For $0\leq k\leq n$, with $t:=\frac{k-n/\mu}{\sigma\sqrt{n/\mu^3}}$, we define a rejection sequence $w_{n,k}$ as follows:
\[
\mathrm{if}\ y_n^{(r)}(t)\geq a\ \ \mathrm{then\ set}\ w_{n,k}:=\frac{a}{y_n^{(r)}(t)},\ \ \ \mathrm{otherwise}\ \ \mathrm{set}\ w_{n,k}=1.
\]
Then with this rejection sequence, we have
\begin{equation}
\dtv(\pi_n,\pi_n^{\mathrm{rej}})\sim \frac{c}{2\,n^{\frac{r+1}{2}}}
\end{equation}
where
\[
c=\frac{1}{\sqrt{2\pi}}\int_{\mathbb{R}}e^{-t^2/2}\,\big| p_\infty^{r+1}(t) -I\big|\,\mathrm{d}t,\ \ \mathrm{with}\ I:=\frac{1}{\sqrt{2\pi}}\int_{\mathbb{R}}e^{-t^2/2}\, p_\infty^{r+1}(t)\,\mathrm{d}t.
\]
The probability of success of each trial tends to $a$ as $n\to\infty$, so that the expected complexity of $\Gammal C^{\mathrm{\,rej}} [n]$ is $O(n)$.     
\end{prop}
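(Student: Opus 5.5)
Following the pattern of Proposition~\ref{prop:asympt_uniform}, I would start from the formula of Lemma~\ref{lem:estimate_dnk_rej},
\[
\dtv(\pi_n,\pi_n^{\mathrm{rej}})=\tfrac12\sum_k \tfrac{c_{n,k}}{c_n}\bigl|1-d^{\mathrm{rej}}_{n,k}\bigr|,
\qquad d^{\mathrm{rej}}_{n,k}=\frac{d_{n,k}w_{n,k}}{W_n},
\]
and split the sum over $k\in I_n=\Int_n(\log(n)^2)$ and $k\in J_n$.

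\emph{The tail $J_n$.} Since $w_{n,k}\le 1$, we have $d_{n,k}w_{n,k}\le d_{n,k}=O(n^{(s-1)_+})$. Granting $W_n\to a$, the tail bound~\eqref{eq:large_dev} makes the $J_n$-contribution $O(n^{-b})$ for every $b$, exactly as in Proposition~\ref{prop:asympt_uniform}.

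\emph{The central range $I_n$.} Here $|t|\le \log(n)^2=o(\sqrt n)$, so Lemma~\ref{lem:estimate_dnk_general} gives
\[
d_{n,k}=y_n^{(r)}(t)+p_\infty^{r+1}(t)\,n^{-(r+1)/2}+O\Big(\big(\tfrac{1+t^2}{n}\big)^{\frac{r+2}{2}}\Big).
\]
Each polynomial $p^i_\infty(t)\,n^{-i/2}$ is $O(\log(n)^{2i}n^{-i/2})=o(1)$, so $y_n^{(r)}(t)\to1$ uniformly on $I_n$. Hence $y_n^{(r)}(t)\ge a$ for large $n$, so $w_{n,k}=a/y_n^{(r)}(t)$ and
\[
d_{n,k}w_{n,k}=a\Bigl(1+p_\infty^{r+1}(t)\,n^{-(r+1)/2}+E_{n,k}\Bigr),
\qquad E_{n,k}=O\Big(\big(\tfrac{1+t^2}{n}\big)^{\frac{r+2}{2}}\Big)+O\bigl(\mathrm{poly}(t)\,n^{-(r+2)/2}\bigr).
\]
The second error term comes from dividing by $y_n^{(r)}$ and re-expanding.

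\emph{The normalisation $W_n$.} Summing against $c_{n,k}/c_n$, and using that the tails are negligible and the Gaussian moments of $X_n$ are bounded, I get
\[
W_n=a\Bigl(1+n^{-(r+1)/2}\,\mathbb{E}\bigl[p_\infty^{r+1}(T_n)\bigr]+O\bigl(n^{-(r+2)/2}\bigr)\Bigr),
\]
where $T_n$ is the standardised $X_n$. The key claim is $\mathbb{E}[p^{r+1}_\infty(T_n)]\to I$. Convergence of polynomial moments of $T_n$ to Gaussian moments follows from the quasi-power framework: the moment generating function converges in a neighbourhood of $0$, as shown in the proof of Lemma~\ref{lem:estimate_cnk}. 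This also gives $W_n\to a$, hence the success probability tends to $a$, and combined with Proposition~\ref{prop:linear} the expected complexity is $O(n)$.

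\emph{The leading term.} Dividing,
\[
1-d^{\mathrm{rej}}_{n,k}=-n^{-(r+1)/2}\bigl(p_\infty^{r+1}(t)-I+o(1)\bigr)+O\bigl(\mathrm{poly}(t)\,n^{-(r+2)/2}\bigr).
\]
The sum $\sum_{k\in I_n}\frac{c_{n,k}}{c_n}\,|p^{r+1}_\infty(t)-I|$ tends to $\frac1{\sqrt{2\pi}}\int e^{-t^2/2}|p^{r+1}_\infty(t)-I|\,dt=c$. I would prove this with the same truncation argument as in Proposition~\ref{prop:asympt_uniform}:
\begin{itemize}
\item on $\Int_n(m)$, use the local limit law~\eqref{eq:local_limit} as a Riemann sum;
\item on $I_n\setminus\Int_n(m)$, bound the contribution by $\mathrm{poly}(m)\,e^{-\kappa m^2}$ via~\eqref{eq:large_dev};
\item then let $m\to\infty$.
\end{itemize}
The error terms contribute $O(n^{-(r+2)/2})$ after summation. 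This yields $\dtv\sim c/(2n^{(r+1)/2})$.

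\emph{Main obstacle.} The main obstacle is uniformity of the error terms over the whole of $I_n$. The remainder in Lemma~\ref{lem:estimate_dnk_general} grows polynomially in $t$, and I need it to stay $o(n^{-(r+1)/2})$ after integration against the Gaussian tail bound. The polynomial factors $\mathrm{poly}(\log n)$ are absorbed by $e^{-\kappa t^2}$ exactly as above, so I expect this to go through, but it must be checked carefully. A degenerate case is $p^{r+1}_\infty\equiv I$ (e.g.\ $s=1$), where $c=0$ and the equivalence should be read as $o(n^{-(r+1)/2})$.
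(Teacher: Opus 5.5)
Your proposal is correct and follows essentially the paper's proof. You expand $w_{n,k}d_{n,k}=a\bigl(1+p_\infty^{r+1}(t)\,n^{-(r+1)/2}+\cdots\bigr)$ on the central range via Lemma~\ref{lem:estimate_dnk_general}, deduce $W_n=a\bigl(1+(I+o(1))\,n^{-(r+1)/2}\bigr)$, obtain $d^{\mathrm{rej}}_{n,k}=1+\bigl(p_\infty^{r+1}(t)-I\bigr)n^{-(r+1)/2}+\cdots$, and conclude with the same $[-m,m]$ truncation argument as in Proposition~\ref{prop:asympt_uniform}. Two additions go beyond what the paper spells out. First, you check that $y_n^{(r)}(t)\ge a$ on $I_n$. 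Second, you write $W_n$ through $\mathbb{E}[p_\infty^{r+1}(T_n)]\to I$ rather than as an equivalence, which also covers the case $I=0$; this occurs whenever $r$ is even, since $p_\infty^{r+1}$ is then odd.
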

\begin{proof}
With the notation of Lemma~\ref{lem:estimate_dnk_rej}, we aim first at obtaining an asymptotic estimate for $W_n:=\sum_k\frac{c_{n,k}}{c_n}w_{n,k}d_{n,k}$. Let $\epsilon_n=o(\sqrt{n})$. Then, for $k=\frac{n}{\mu}\Big(1+t\frac{\sigma}{\sqrt{\mu n}}\Big)$ with $|t|\leq \epsilon_n$, by Lemma~\ref{lem:estimate_dnk_general} we have 
\[
w_{n,k}d_{n,k}=a+a\,p_\infty^{r+1}(t)n^{-(r+1)/2}+O\left(\Big(\frac{1+t^2}{n}\Big)^{\frac{r+2}{2}}\right).
\]
Hence,
\[
W_n-a=\sum_k\frac{c_{n,k}}{c_n}\big(w_{n,k}d_{n,k}-a\big)\sim \frac{a}{n^{\frac{r+1}{2}}}I,
\]
where the equivalence of the sum to the integral can be established by the exact same argument (integrating over $[-m,m]$ and letting $m\to\infty$) as in the proof of Proposition~\ref{prop:asympt_uniform}. 

Hence, uniformly for $|t|\leq \epsilon_n$, we have
\begin{align}
d_{n,k}^{\mathrm{rej}}&:=\frac1{W_n}w_{n,k}d_{n,k}\nonumber\\
&=
1+\big(p_\infty^{r+1}(t)-I\big)n^{-\frac{r+1}{2}}+o\big(n^{-\frac{r+1}{2}}\big)+O\left(\Big(\frac{1+t^2}{n}\Big)^{\frac{r+2}{2}}\right)\label{eq:dnkrej}
\end{align}
from which we obtain (again by the same argument as in the proof of Proposition~\ref{prop:asympt_uniform})
\[
\sum_k \frac{c_{n,k}}{c_n}\Big|1- d_{n,k}^{\mathrm{rej}}\Big|\sim \frac{c}{n^{\frac{r+1}{2}}}.
\]
\end{proof}


\begin{remark}
For $r=0$ we have $y_n^{(0)}(t)=1$, so that $w_{n,k}=a$. Hence $\pi_n^{\mathrm{rej}}=\pi_n'$ whatever the chosen value of $a\in(0,1)$, and taking the limit value $a=1$, the generator $\Gammal C^{\mathrm{\,rej}} [n]$ is exactly $\Gammal C[n]$. 
\end{remark}

\subsection{Extensions}
As briefly described below, the leap generation method and the (possibly accelerated) asymptotic uniformity work in more general settings. 

\subsubsection{Composition schemes with a distinguished first component} 
One can consider more generally a composition scheme of the form
\[
\cC=\cD\times\, \cA\circ\cB.
\]
Such a scheme is called \emph{supercritical} if $\rho_B>\rho_C$ and $\rho_D>\rho_C$. 
It is called \emph{admissible} if furthermore $B(z),C(z)$ are aperiodic and the support of $A(y)$ is $\mathbb{N}$. In that case (with $\rho:=\rho_C$) we let $\Gammal C[n]$ generate a sequence $\delta\leftarrow \Gamma D(\rho),\beta_1\leftarrow\Gamma B(\rho),\ldots,\beta_k\leftarrow\Gamma B(\rho)$ until the total size $S:=|\delta|+|\beta_1|+\cdots+|\beta_k|$ satisfies $S\geq n$ (possibly $k=0$) and we reject and restart if $S>n$. If $S=n$ we call $\alpha\leftarrow \Gamma A[k]$ and return the pair $\gamma$ made of $\delta$ and of the composition of $\alpha$ with $\beta_1,\ldots,\beta_k$ (with a final relabeling operation if in the labeled setting). The formulas for $q_n$ and $d_{n,k}$  are slightly modified to
\begin{equation}\label{eq:case_with_first_component}
q_n=[z^n]\frac{D(\rho z)}{D(\rho)}\frac{1}{1-B(\rho z)/B(\rho)},\ \ \ d_{n,k}=\frac1{q_n}\frac{c_n\rho^n}{D(\rho)\,a_k\,B(\rho)^k},
\end{equation}
while the formula for the total variation distance remains 
$\dtv(\pi_n,\pi_n')=\frac1{2}\sum_k S_{n,k}$, with $S_{n,k}=\frac{c_{n,k}}{c_n}|1-d_{n,k}|$.  
The estimates~\eqref{eq:estimate_qn} and~\eqref{eq:dtvasympt} are still valid, as well as the linear running time of $\Gammal C[n]$, assuming that the samplers $\Gamma A[k],\Gamma B(x),\Gamma D(x)$ have running time linearly bounded by the size of the output.

\subsubsection{Periodic case} 
If the support of $A(y)$ is not $\mathbb{N}$, then each trial of sequence generation $(\delta,\beta_1,\ldots,\beta_k)$ is declared a success if $S=n$ \emph{and} $k$ is in the support of~$A(y)$. As before, once a successful trial is obtained, one calls $\alpha\leftarrow \Gamma A[k]$ and returns the pair made of $\delta$ and of $\alpha$ composed with $\beta_1,\ldots,\beta_k$.  
This situation with gaps in the support of $A(y)$ typically has to be considered when $A(y)$ is periodic. 
We take the example where the support of $D(z)$ is $d\,\mathbb{N}$ and the supports of $A(y),B(z),C(z)$ are $1+d\,\mathbb{N}$ for some fixed $d\geq 2$. Then the formulas for $q_n$ and $d_{n,k}$ are still given by~\eqref{eq:case_with_first_component} 
under the condition that $n,k\in 1+d\,\mathbb{N}$. And in the formula for $\dtv(\pi_n,\pi_n')$ the sum is restricted to $k\in 1+d\,\mathbb{N}$. 
The estimate~\eqref{eq:estimate_qn} is still valid upon conditioning on $n\in 1+d\mathbb{N}$ 
and the sampler $\Gammal C[n]$ still has expected running time $O(n)$ if the samplers $\Gamma A[k],\Gamma B(x), \Gamma D(x)$ have running time linearly bounded by the size of the output. 
The estimate~\eqref{eq:dtvasympt} is also still valid, with singular exponent defined in the $d$-periodic setting 
(again conditioning on $n\in 1+d\mathbb{N}$).  

\subsubsection{Weighted case} The method also extends to weighted structures. In a \emph{weighted class} $\cC$, to each structure $\gamma\in\cC$ is associated a weight $w(\gamma)\in\mathbb{R}_+$. In that case, the associated generating function and the $w$-distribution 
$\pi_n$ on $\cC_n$ are given by 
\[
C(z)=\sum_{\gamma\in\cC}\delta_{|\gamma|}w(\gamma)z^{|\gamma|},\ \ \ \pi_n(\gamma)=\frac{\delta_nw(\gamma)}{[z^n]C(z)}\ \ \mathrm{for}\ \gamma\in\cC_n.
\]  
The weights are usually required to follow multiplicative rules when assembling structures. Thus, 
for a composition scheme $\cC=\cD\times\cA\circ\cB$, and for $\gamma\in \cC$ made of $\delta\in\cD$ and of $\alpha\in\cA$ composed with $\beta_1,\ldots,\beta_k$, we require 
$w(\gamma)=w(\delta)w(\alpha)\prod_{i=1}^kw(\beta_i)$, so that the generating functions are related by $C(z)=D(z)\,A(B(z))$. The definitions of supercritical and admissible composition schemes extend straightforwardly, as well as the description of the leap generator $\Gammal C[n]$, assuming 
 the generator $\Gamma A[k]$ draws a structure in $\cA_k$ under the $w$-distribution, while the generator $\Gamma B(x)$ (and similarly $\Gamma D(x)$) draws a structure in $\cB$  under the $w$-weighted Boltzmann distribution $\mathbb{P}_x^\cB(\beta)=\delta_{|\beta|}w(\beta)x^{|\beta|}/B(x)$. As before, the sampler $\Gammal C[n]$ has expected running time $O(n)$ if the samplers for the component classes have running time linearly bounded by the size of the drawn object. 
 Letting  $\pi_n'$ be the induced distribution on $\cC_n$, the estimate~\eqref{eq:dtvasympt} remains valid, so that $\pi_n'$ is asymptotically the $w$-distribution $\pi_n$, in the total variation sense.

\subsubsection{Critical composition schemes} Finally, the method can also be extended to certain \emph{critical} composition schemes. In such a scheme $\cC=\cD\times\cA\circ\cB$ we have $\rho_B=\rho_C$ and $B(\rho_B)=\rho_A$ (and $\rho_D\geq \rho_C$). The leap generator $\Gammal C[n]$ can be defined exactly as in the supercritical case. While we do not carry a general analysis, we will see in Section~\ref{sec:maps} an important example (and extensions) of such a scheme: the block decomposition of planar maps. We will show that the expected linear time complexity and the asymptotic uniformity (up to using a rerooting operation on top of $\Gammal C[n]$) are still satisfied for such a scheme, the order of magnitude of $\dtv(\pi_n,\pi_n')$ being of order $n^{-1/3}$ instead of $n^{-1/2}$.

\section{Applications}\label{sec:appli}
We describe here how the strategy applies to various classes, of walks, trees and planar maps. In each case we take advantage of a decomposition in terms of a core-structure for which  exact-size uniform random sampling can be done in linear time. 
\subsection{Walks}
We first show how leap generators can be obtained for Motzkin and Schr\"oder walks. 
For these families, exact-size uniform random samplers of linear time complexity have been developed~\cite{Alonso94,gouyou2010random,bacher2013exact,bacher2018improving}, so that applying leap generation is of limited interest. We rather consider these as toy examples where the strategy is simplest to implement and to validate experimentally (see Section~\ref{sec:experiments}).  
\subsubsection{Motzkin walks}
A \emph{Motzkin walk} of length $n\geq 0$ is a walk with $n$ steps in $\{NE,E,SE\}$, starting at the origin, ending at $(n,0)$, and staying in the upper half-plane $\{y\geq 0\}$. 
\emph{Dyck walks} are Motzkin walks with no horizontal step. Let $\cC$ be the class of Motzkin  walks where the size is the length, and $\cA$ the class of Dyck walks where the size is the semi-length. A Motzkin walk is uniquely obtained from a Dyck walk where each point can be extended into a (possibly empty) sequence of horizontal steps. Since a Dyck walk of semi-length $k$ has $2k+1$ points, this yields
\[
\cC=\sum_{k\geq 0}\cA_k\cZ^{2k}\Seq(\cZ)^{2k+1}.
\]
Hence, letting $\cB=(\cZ\times \Seq(\cZ))^2$ and $\cD=\Seq(\cZ)$, we have
\[
\cC=\cD\times\cA\circ\cB.
\]
The composition scheme is admissible, with $\rho_B=\rho_D=1>\rho_C=1/3$, and the class $\cA$ has simple exact-size uniform random samplers (e.g. via the cyclic lemma). In the leap generation process, each call to the Boltzmann sampler $\Gamma B(1/3)$ returns 
the pair $(i,j)$ (considered to have size $i+j+2$), 
with $i,j$ drawn independently under a $\mathrm{Geom}(1/3)$ law.    

\subsubsection{Schr\"oder walks}
A \emph{Schr\"oder walk} of size $n\geq 0$ is a walk with steps in $\{N,E,NE\}$,
starting at the origin, ending at $(n,n)$, and staying in $\{y\geq x\}$. 
Upon rotating by $45$ degrees, these correspond to Motzkin walks whose length plus number of horizontal steps is equal to $2n$. Similarly as before, each Schr\"oder walk is obtained as a Dyck walk with a possibly empty sequence of horizontal steps inserted at each point (compared to the Motzkin case, only the size parameter behaves differently). Letting $\cC$ be the class of Schr\"oder excursions, and $\cA$ the class of Dyck walks where the size is the semi-length, this gives
\[
\cC=\sum_{k\geq 0}\cA_k\cZ^{k}\Seq(\cZ)^{2k+1}
\]
Hence, letting $\cB=\cZ\times \Seq(\cZ)^2$ and $\cD=\Seq(\cZ)$, we have
\[
\cC=\cD\times\cA\circ\cB.
\]
The composition scheme is again admissible, with $\rho_B=\rho_D=1>\rho_C=3-2\sqrt{2}$. In the leap generation process, each call to the Boltzmann sampler $\Gamma B(\rho_C)$ returns 
the pair $(i,j)$ (considered to have size $i+j+1$), 
with $i,j$ drawn independently under a $\mathrm{Geom}(\rho_C)$ law.  

\subsection{Trees}
Leap generators can also be obtained for various classes of unlabeled rooted trees, a main motivation for this work.
\subsubsection{P\'olya trees}
Recall that a \emph{P\'olya tree} (resp. a rooted Cayley tree) is a rooted unlabeled (resp. labeled) tree where the children of nodes are not ordered, the size being the number of vertices. 
The class $\cA$ of rooted Cayley trees is given by 
\[
\cA=\cZ\times\Set(\cA),\ \ A(z)=z\exp(A(z)),\ \ |\cA_n|=n^{n-1},
\]
and one can perform uniform random generation in $\cA_n$ in linear time (e.g. via Pr\"ufer code). 
The class of P\'olya trees is the unlabeled class $\tcA$ associated to $\cA$, it is given by
\begin{equation}\label{eq:tcA}
\tcA=\cZ\times\MSet(\tcA),\ \ \tA(z)=z\exp\big(\sum_{i\geq 1}\frac1{i}\tA(z^i)\big).
\end{equation}
Letting $\cC=\Sym(\cA)$, we have 
\begin{equation}\label{eq:PolyaCA}
\cC_n\simeq n!\times \tcA_n,
\end{equation}
 so that asymptotically uniform random sampling in $\tcA_n$ reduces to asymptotically uniform random sampling in $\cC_n$, as recalled in Section~\ref{sec:background_tools}. Let $(\sigma,\gamma)\in\cC_n$, with $\gamma\in\cA_n$ and $\sigma$ an automorphism of $\gamma$. 
Following~\cite{panagiotou2018scaling}, one can consider the \emph{core} of $(\sigma,\gamma)$ as the subtree (containing the root-vertex) formed by the vertices that are fixed by $\sigma$. 
\begin{figure}
\begin{center}
\includegraphics[width=12cm]{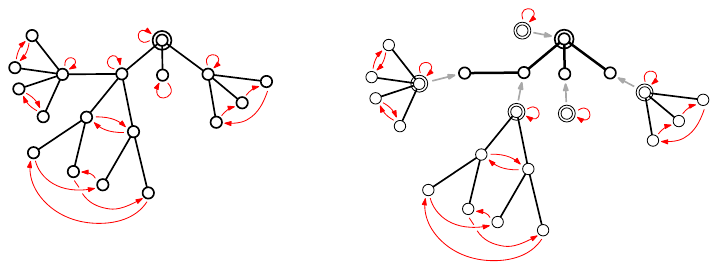}
\end{center}
\caption{Left: a rooted tree (labels not indicated) endowed with an automorphism. Right:   decomposition into a core (the subtree made of the fixed vertices) where at each vertex is attached a rooted tree endowed with an automorphism fixing only its root-vertex.}
\label{fig:polya}
\end{figure}
For each vertex $v$ of the core, let $\tau_v$ be  the subtree made of $v$ and the subtrees hanging from the unfixed children (if any) of $v$. 
Letting $\cB$ be the subclass of $\cC$ such that the root-vertex is the only fixed vertex, one thus has (see Figure~\ref{fig:polya})
\[
\cC=\cA\circ\cB.
\]   
This is reflected at the level of generating functions. Indeed, the equation for $\tA(z)$ is equivalent to 
\[
\tA(z)=\Big(z\exp\big(\sum_{i\geq 2}\frac1{i}\tA(z^i)\big)\Big)\exp(\tA(z)),
\]
hence one has $\tA(z)=A(B(z))$, where $B(z)=z\exp\big(\sum_{i\geq 2}\frac1{i}\tA(z^i)\big)$. 
Moreover~\eqref{eq:PolyaCA} ensures that $\tA(z)$ is also the exponential generating function $C(z)$ of the class $\cC$. And P\'olya theory ensures that $B(z)$ is the exponential generating function of $\cB$; indeed the cycle index sum of sets endowed with a permutation with no fixed point is $\exp(\sum_{i\geq 2}\frac1{i}s_i)$.   

This composition scheme is known to be admissible, with $\rho_C\approx 0.338$ and ${\rho_B=\rho_C^{1/2}}$. 
The Boltzmann sampler $\Gamma B(x)$ (to be called at $\rho_C$ in the leap generation process) is deduced from the known (ordinary) Boltzmann sampler $\Gamma \tA(x)$ for P\'olya trees~\cite{flajolet2007boltzmann}. It is shown in Figure~\ref{algo:gamma-B} and, like $\Gamma \tA(x)$, has linear complexity in the size of the output. On the other hand,  an exact-size sampler  $\Gamma A[k]$ is readily obtained, for instance via the Pr\"ufer encoding of Cayley trees. Thus the leap generator for $\cC$ also has linear time complexity. 

\begin{remark}
The very recent generator developed in~\cite{stuflerPolya} achieves complexity $O(n)$ for uniform exact-size sampling of P\'olya trees. It also builds on a leap generation process involving $\Gamma B(\rho_C)$, but it tilts the acceptance probability of each structure (depending on $n$ and the core-size $k$) so that the obtained distribution on $\cC_n$ is uniform, while managing to boost these acceptance probabilities (following ideas developed in~\cite{sportiello}) so that the probability of success of each attempt is $\Omega(1)$. It is however a bit more involved than our leap generator, as the acceptance probability needs to be computed at each attempt, requiring the precomputation of $\Theta(n)$  numerical estimates of real numbers (such as the log of factorial numbers). 
\end{remark}

\begin{figure}\small
\fbox{\begin{minipage}{.82\textwidth}
Let $\textsc{Max\_Index}(x)$ be a  generator according to the following distribution:
\begin{equation*}\label{defmax}
\Pr(K\leq k)=\frac{x}{\tilde{A}(x)}\prod_{j\leq k}\exp\left(\frac{1}{j}\tilde{A}(x^j)\right)
.
\end{equation*}

\smallskip

\begin{tabbing}
XXX \= XX \= XXX \= XXX \= XXX \kill
{\bf Algorithm } \textsc{Random\_Forest}$(x)$~:\\
\> $\gamma\leftarrow \varnothing$; $k_0\leftarrow \textsc{Max\_Index}(x)$; \\
\>{\bf if} $k_0 < 2$ {\bf then~return} $\gamma$\\
\> {\bf for} $j$ {\bf from} $2$ {\bf to} $k_0-1$ {\bf do}\\
\>\> $\gamma \leftarrow \gamma,\left[\operatorname{Pois}\left( \frac{\tilde{A}(x^j)}{j}\right)
\implies \operatorname{copy}(j,\Gamma \tilde{A}(x^j))\right]$\\
\> $\gamma \leftarrow \gamma, \left[\operatorname{Pois}_{\geq 1}\left( \frac{\tilde{A}(x^{k_0})}{k_0}\right)
\implies \operatorname{copy}(k_0,\Gamma \tilde{A}(x^{k_0}))\right]$\\
\> {\bf return} $\gamma$
\end{tabbing}

\smallskip

\begin{tabbing}
XXX \= XX \= XXX \= XXX \= XXX \kill
{\bf Algorithm } $\Gamma B(x)$~:\\
\> $(\gamma_1,\dotsc,\gamma_m) \leftarrow$ \textsc{Random\_Forest}$(x)$\\
\> {\bf return} $[\mathcal Z, [\gamma_1,\dotsc,\gamma_m]]$
\end{tabbing}
\end{minipage}}

\caption{\label{algo:gamma-B}
Algorithm $\Gamma B(x)$ for P\'olya trees}
\end{figure}

\subsubsection{Unlabeled phylogenetic trees}
A phylogenetic tree is a rooted binary tree such that the two children at each node are unordered. For convenience, the tree with a single leaf is also included in the class; the size $n$ is the number of leaves. 
Let $\cA$ be the class of labeled (at leaves) phylogenetic trees, given by
\[
\cA=\cZ+\Set_2(\cA),\ \ \ A(z)=z+\frac1{2}A(z)^2,\ \ \ |\cA_n|=(2n-3)!!,
\]
for which uniform random sampling at size $n$ can be done in linear time (e.g. by Pr\"ufer codes, or R\'emy's algorithm). 
Let $\tcA$ be the associated unlabeled class, given by
\[
\tcA=\cZ+\MSet_2(\tcA),\ \ \ \tA(z)=z+\frac1{2}\big(\tA(z)^2+\tA(z^2)\big)
\]
To obtain a leap generator for $\tcA_n$ one can proceed similarly to P\'olya trees. 
We consider the labeled class $\cC=\Sym(\cA)$, related to $\tcA$ by $\cC_n\simeq n!\times \tcA_n$. For $(\sigma,\gamma)\in\cC_n$ we note that $\sigma$ naturally extends to a permutation of the nodes, so that $\sigma$ can be seen as a permutation on the vertex-set of $\gamma$. As before, the \emph{core} of $\gamma$ is the subtree~$\tau$ formed by the vertices that are fixed by $\sigma$. Letting $\cB$ be the subclass of $\cC$ such that the root-vertex is the only fixed vertex, we note that each leaf $v$ of $\tau$ has a (possibly trivial) hanging subtree in $\cB$. So the core-decomposition yields
\[
\cC=\cA\circ\cB.
\]
Again this is reflected at the generating function level: the series $\tA(z)$ is the exponential generating function of $\cC$, and the above equation for $\tA(z)$ rewrites as $\tA(z)=A(B(z))$, where $A(z)=z+\frac1{2}A(z)^2$ and  $B(z):=z+\frac1{2}\tA(z^2)$ is the exponential generating function of $\cB$. 

This composition scheme is known to be admissible, with $\rho_C\approx 0.403$ and ${\rho_B=\rho_C^{1/2}}$. The Boltzmann sampler for $\cB$ is deduced from the known Boltzmann sampler $\Gamma \tilde{A}(x)$ for $\tcA$~\cite{flajolet2007boltzmann}. It is shown in Figure~\ref{algo:gamma-B2}, and its complexity is linear in the size of the output. Exact-size sampling for the class $\cA$ is also easy to obtain (e.g. by Pr\"ufer encoding, or R\'emy's algorithm~\cite{remy1985procede}). Thus the leap generator for $\cC$ has linear time complexity.

\begin{figure}[t]\small
\fbox{\begin{minipage}{.82\textwidth}

\begin{tabbing}
XXX \= XX \= XXX \= XXX \= XXX \kill
{\bf Algorithm } $\Gamma B(x)$~:\\
\>{\bf if} $\mathrm{Bern}(x/B(x))$ {\bf then~return} $\mathcal Z$\\
\>{\bf else}\\
\>\> $\gamma \leftarrow \Gamma \tilde{A}(x^2)$\\
\>\> {\bf return} $[\mathcal Z, [\gamma,\gamma]]$
\end{tabbing}
\end{minipage}}

\caption{\label{algo:gamma-B2}
Algorithm $\Gamma B(x)$ for phylogenetic trees}
\end{figure}

\subsubsection{Unlabeled $k$-ary mobiles}
A \emph{mobile} is a rooted tree where the children of each node are cyclically ordered. 
For fixed $k\geq 2$, let $\cAk$ be the class of $k$-ary mobiles with labeled leaves (the size being the number of leaves), which is given by
\[
\cAk=\cZ+\Cyc_k(\cA),\ \ \ \Ak(z)=z+\frac1{k}\Ak(z)^k,
\]
with the following explicit expression for the counting coefficients:
\[
|\cAk_n|=\frac{(m+n-1)!}{m!k^m}\ \mathrm{for}\ m:=\frac{n-1}{k-1}\in\mathbb{N},\ \ \ |\cAk_n|=0\ \mathrm{for}\ n\neq 1\ \mathrm{mod}\ k-1. 
\]
Note that this is an extension of the previous section, since the case $k=2$ gives  phylogenetic trees (indeed, for $k=2$, $\Cyc_k$ and $\Set_k$ are the same construction). 

Let $\tcA$ be the associated unlabeled class, given by (where $\Cyc_k$ is the construction for unlabeled classes)
\[
\tcA=\cZ+\Cyc_k(\tcA),\ \ \ \tA(z)=z+\frac1{k}\sum_{d|k}\phi(d)\tA(z^d),
\] 
with $\phi(.)$ the Euler totient function. 
As before we consider the labeled class $\cC=\Sym(\cA)$, and for $(\sigma,\gamma)\in\cC$, extend $\sigma$ to the whole vertex-set of $\gamma$, and define the core as the subtree made of the vertices (nodes or leaves) that are fixed under $\sigma$. Letting $\cB$ be the subclass of $\cC$ such that the root is the only fixed vertex, we have 
\[
\cC=\cA\circ\cB.
\]
again reflected at the generating function level by the fact that $C(z):=\tA(z)$ is the exponential generating function of $\cC$, 
and $B(z):=z+\frac1{k}\sum_{d>1,d|k}\phi(d)\tA(z^d)$ is the exponential generating function of $\cB$. 

\begin{remark}
For $k\geq 3$ we have not succeeded in applying a similar strategy to the class $\cAk$ of rooted non-embedded $k$-ary trees, which is given by
\[
\cAk=\cZ+\Set_k(\cAk),\ \ \ \Ak(z)=z+\frac1{k!}\Ak(z)^k,
\]
The unlabeled class $\tcAk$ is specified by
\[
\tcAk=\cZ+\MSet_k(\tcAk),\ \ \ \tAk(z)=\Pk(\tAk(z),\ldots,\tAk(z^k)),
\]
where
\[
\Pk(s_1,\ldots,s_k)=[u^k]\exp\Big(\sum_{i\geq 1}\tfrac1{i}u^is_i\Big).
\]
For instance for $k=3$, letting $C(z)=A^{(3)}(z)$, we have 
\[
C(z)=z+\frac1{6}C(z)^3+\frac1{2}C(z)C(z^2)+\frac1{3}C(z^3).
\] 
The term involving $C(z)C(z^2)$
makes it difficult to write $C(z)$ as $A(B(z))$ with $\rho_B>\rho_C$ and the coefficients of $A(z)$ having a simple closed form (or at least such that the functional inverse of $A(z)$ is rational). 
\end{remark}

\subsubsection{Unlabeled Schr\"oder mobiles}
A \emph{Schr\"oder mobile} is a mobile with no node having one child; the size is the number of leaves. The class $\cA$ of Schr\"oder mobiles that are labeled at leaves is given by
\[
\cA=\cZ+\Cyc_{\geq 2}(\cA), \ \ A(z)=z+\log\Big(\frac1{1-A(z)}\Big)-A(z).
\] 
There is no simple closed formula for the counting coefficients, but the equation for $A(z)$ can be 
written as $A(z)=z\Phi(A(z))$, with $\Phi(y)=\frac{1}{1-\tfrac1{y}\big(\log(1/(1-y))-y\big)}$. Thus the class $\cA$ fits into the setting of Galton-Watson trees for random generation. Devroye's algorithm~\cite{devroye2012simulating} allows to sample uniformly at random in $\cA_n$ in linear time. 

The associated unlabeled class $\tcA$ is given by (with $\Cyc_{\geq 2}$ the construction for unlabeled classes) 
\[
\tcA=\cZ+\Cyc_{\geq 2}(\tcA), \ \ \tA(z)=z+\sum_{d\geq 1}\frac{\phi(d)}{d}\log\Big(\frac1{1-\tA(z^d)}\Big)-\tA(z).
\]
Again one can consider the symmetry class $\cC=\Sym(\cA)$, the core of an object $(\sigma,\gamma)\in\cC$; and letting $\cB$ be the subclass of $\cC$ such that the root is the only fixed vertex, one has $\cC=\cA\circ\cB$. The equation for $\tA(z)$ reflects this decomposition: $C(z):=\tA(z)$ is the exponential generating function of $\cC$, and $B(z):=z+\sum_{d\geq 2}\frac{\phi(d)}{d}\log\Big(\frac1{1-\tA(z^d)}\Big)$ is the exponential generating function of $\cB$.

\subsection{Planar maps}\label{sec:maps}
A planar map, hereafter simply called a map, is a connected graph embedded on the sphere, considered up to continuous deformation. It is rooted by marking and orienting an edge that is called the root. 
In this section we show that leap generators can be successfully applied to known composition schemes relating map families.

\subsubsection{Planar maps decomposed into blocks}\label{sec:maps_blocks}
We demonstrate the strategy for maps decomposed into blocks, before discussing extensions in Section~\ref{sec:maps_extensions}. A map is called 2-connected if its edge-set $E$ cannot be partitioned as $E=E_1\cup E_2$ such that there is a unique vertex (called a separating vertex) incident to at least one edge from both subsets. In a map, a \emph{block} is a maximal 2-connected submap. Let~$\cC$ and $\cA$ be the families of rooted maps and of rooted 2-connected maps. 

\begin{figure}
\begin{center}
\includegraphics[width=12cm]{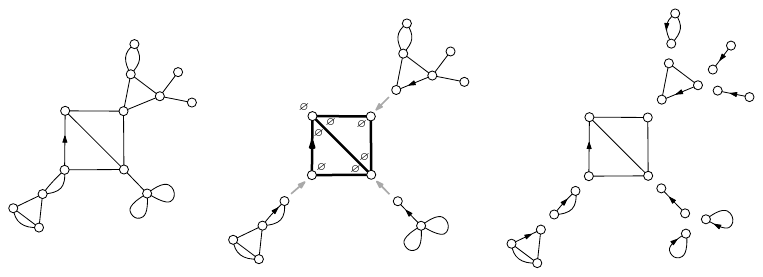}
\end{center}
\caption{Left: a rooted map $\gamma$. Middle: $\gamma$ is decomposed into a core (block containing the root-edge) where at each corner is attached a (possibly empty) rooted map. Right: the collection of blocks of $\gamma$. The block sizes in decreasing order are $5,4,3,2,2,1,1,1,1,1.$}
\label{fig:blocks}
\end{figure}

For a rooted map, the \emph{root-block} is the block containing the root-edge. It is well known~\cite{tutte1963census} that a rooted map can be decomposed into its root-block and a collection of (possibly empty) rooted maps, one attached at each corner of the root-block, see Figure~\ref{fig:blocks}. 
Since a rooted map with $n$ edges has $2n$ corners, this decomposition yields
\[
\cC=\cA\circ\cB,
\] 
where $\cB=\cZ\times(1+\cC)^2$.

The generating functions $C(z), B(z)$ and $A(y)$ have explicit algebraic expressions, giving the following singular expansions~\cite[Prop.~4]{banderier2001random} of order $3/2$, at $\rho=\rho_C=\rho_B=1/12$ and $\rho_A=4/27$ respectively,
\[
C(z)= \frac1{3} -\frac{4}{3}(1-12z)+\frac{8}{3}(1-12z)^{3/2}+O\big((1-12z)^{2}\big),
\] 
\[
B(z)= \frac{4}{27} -\frac{4}{9}(1-12z)+\frac{16}{27}(1-12z)^{3/2}+O\big((1-12z)^{2}\big),
\] 
\[
A(y)= \frac1{3} -\frac{4}{9}(1-27y/4)+\frac{8\sqrt{3}}{81}(1-27y/4)^{3/2}+O\big((1-27y/4)^{2}\big).
\]
Note that the composition scheme here is critical, namely the singularities of $C(z)$ and $B(z)$ are both at $\rho=1/12$ (whereas in the supercritical case we had $\rho_B>\rho_C$), and $B(\rho)=R$. From the singular expansion of $B(z)$ we derive that the expected size $\mu$ of an object in $\cB$ under the critical Boltzmann distribution is given by
\[
\mu=\frac{\rho B'(\rho)}{B(\rho)}=3.
\]

With the notation of Section~\ref{sec:singular}, the singular expansions of $A(y)$ and $C(z)$ give\footnote{Note that $\kappa_C/\kappa_A=\mu^{5/2}$, even if the singular exponent is $s=-3/2$. This is thus different from supercritical schemes where we had $\kappa_C/\kappa_A=\mu^{-s}$. The relation $\kappa_C/\kappa_A=\mu^{5/2}$ can be considered as  universal for map composition schemes, it is related to the property that there is almost surely a giant component (here a giant block), as revealed by the calculation in~\cite[Appendix~D]{banderier2001random}.} $\kappa_C=\frac{8}{3}$ and $\kappa_A=\frac{8\sqrt{3}}{81}$.

As before, in order to get a leap generator, we first need an exact-size linear-time random generator $\Gamma A[k]$ for the core-class of 2-connected maps. Such a generator can be derived from a bijection with ternary trees~\cite[Sec.~2.3.3]{Schaeffer:these}. We also need  a Boltzmann sampler $\Gamma B(z)$ for $\cB$ (to be called at the singularity $\rho$), such that the expected cost of generating an object $\gamma\in\cB$ is linear in the size of $\gamma$. Such a sampler can be assembled for a map family composed from a core-family having an exact-size linear-time sampler, see e.g.~\cite{Fusy09}. Here we can thus obtain such a Boltzmann sampler for $\cC$ (rooted maps) and for $\cB$ (pairs of rooted maps).  

Then the leap generator $\Gammal C[n]$ proceeds exactly in the same way as in the supercritical case, up to a slight adjustment, namely at the end we reroot the generated map  at a randomly chosen edge (which is given a random direction). By the same arguments as in the supercritical case, the expected complexity of each trial is $\Theta(n)$, and the probability of success of each trial is $\Theta(1)$ (see Lemma~\ref{lem:success_map} below), hence the expected complexity of $\Gammal C[n]$ is $\Theta(n)$.  
As before the distribution on $\cC_n$ given by the leap generator is called the \emph{leap distribution} and is denoted by $\pi_n'$.

\begin{lem}\label{lem:success_map}
Let $q_n$ be the probability of success at each trial of the leap generator. 
Then
\[
q_n=[z^n]\frac{1}{1-B(\rho z)/B(\rho)}=\frac1{\mu}+O(1/\sqrt{n}).
\]
\end{lem}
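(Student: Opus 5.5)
The first identity $q_n=[z^n]Q(z)$ with $Q(z):=\frac{1}{1-B(\rho z)/B(\rho)}$ will be derived exactly as in the proof of Proposition~\ref{prop:linear}. That derivation only uses that each $\beta_i$ is drawn under $\mathbb{P}^{\cB}_\rho$, that a trial succeeds iff the partial sums hit $n$, and that a given sequence of total size $n$ and length $k$ has probability $\rho^n/B(\rho)^k$. None of this needs $\rho_B>\rho$: $B(\rho)$ is finite since $B(z)$ has singular exponent $-3/2$. So this part transfers verbatim. Equivalently, $q_n$ is the renewal probability that the random walk with i.i.d.\ steps distributed as $|\Gamma B(\rho)|$ visits $n$.

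For the asymptotics, I will do singularity analysis on $Q(z)$ rather than pole analysis. Write $u=1-z$, so that $1-12\rho z=u$ with $\rho=1/12$. The given expansion of $B$ yields
\[
1-\frac{B(\rho z)}{B(\rho)}=3u-4u^{3/2}+O(u^2),
\]
using $\frac{4/9}{4/27}=3=\mu$ and $\frac{16/27}{4/27}=4$. Hence
\[
Q(z)=\frac{1}{\mu u}\Big(1+\tfrac{4}{3}u^{1/2}+O(u)\Big)=\frac{1}{\mu(1-z)}+\frac{4}{9}(1-z)^{-1/2}+O(1).
\]
The remainder after the two explicit terms is $O(1)$, or more precisely of the form $c+O(u^{1/2})$ in a $\Delta$-domain. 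Transfer theorems then give $[z^n]Q(z)=\frac1\mu+\frac{4}{9}\frac{n^{-1/2}}{\Gamma(1/2)}+O(n^{-1})$, which implies the claim $q_n=\frac1\mu+O(1/\sqrt n)$.

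The main obstacle is justifying $\Delta$-analyticity of $Q$ at $z=1$, that is, that $1-B(\rho z)/B(\rho)$ has no zero on $|z|\le 1$ other than $z=1$, and that $Q$ extends to a $\Delta$-domain.

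For the zeros, I will use aperiodicity of $B(z)=z(1+C(z))^2$. It has positive coefficients $[z^1]=1$ and $[z^2]=2$, since there is one map with one edge, so it is aperiodic. By the Daffodil lemma, $|B(\rho z)|<B(\rho)$ for $|z|\le1$ with $z\neq1$, so no zero occurs there.

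For the extension, $B$ is algebraic with unique dominant singularity $\rho$ and is $\Delta$-analytic there. This follows from the explicit algebraic forms cited from~\cite{banderier2001random}, or from the same aperiodicity plus the square-root-type expansion for $C$. Near $z=1$ the expansion above shows the denominator behaves like $3u$, so it is nonzero in a small punctured $\Delta$-neighborhood. By compactness, the denominator is bounded away from $0$ on the rest of a slightly enlarged $\Delta$-domain. Hence $Q$ is $\Delta$-analytic, and the transfer theorem of~\cite[Section VI.3]{flajolet2009analytic} applies.
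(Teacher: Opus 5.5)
Your proposal is correct and is essentially the paper's proof: carry over $q_n=[z^n]Q(z)$ from Proposition~\ref{prop:linear}, expand $Q(z)=\frac{1}{\mu(1-z)}+O\big((1-z)^{-1/2}\big)$ using the singular expansion of $B$, and transfer. You additionally justify the $\Delta$-analyticity of $Q$, which the paper leaves implicit, and you correctly write the variable as $1-z$ where the paper's display has $1-z/\rho$.

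One harmless slip: $[z^2]B=4$, since there are two rooted one-edge maps. Also, because $\rho$ lies on the circle of convergence of $B$, the strict inequality $|B(\rho z)|<B(\rho)$ for $|z|=1$, $z\neq 1$, follows from the triangle inequality and absolute convergence rather than from the Daffodil lemma as stated; only the positivity of $[z^1]B$ and $[z^2]B$ is needed.
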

\begin{proof}
The proof that $q_n=[z^n]\frac{1}{1-B(\rho z)/B(\rho)}$ is the same as in the supercritical case. From the singular expansion of $B(z)$ we then obtain 
\[
\frac{1}{1-B(\rho z)/B(\rho)}=\frac{B(\rho)}{\rho B'(\rho)}\frac1{1-z/\rho}+O\Big(1/\sqrt{1-z/\rho}\Big).
\] 
By transfer theorems this yields the estimate of $q_n$. 
\end{proof}

Let $a_k=[y^k]A(y)$ and $c_n=[z^n]C(z)$. 
Let $\Pi_n$ be the set of  sequences $(k_1,k_2,\ldots,k_r)$ of positive integers adding up to $n$ and in (weakly) decreasing order. For $\kk=(k_1,\ldots,k_r)\in\Pi_n$ we let $\cC_{n,\kk}$ be the set of rooted maps in $\cC_n$ with $r$ blocks whose sizes in decreasing order are given by $\kk$, and let $c_{n,\kk}=|\cC_{n,\kk}|$.   

\begin{lem}
Let $\kk=(k_1,\ldots,k_r)\in\Pi_n$.   
Then for every $\gamma\in\cC_{n,\kk}$ one has
\begin{equation}\label{eq:pi}
\pi_n'(\gamma)=\frac{1}{|\cC_n|}d_{n,\kk},\ \ \mathrm{with}\ \ d_{n,\kk}:=\frac1{q_n}\sum_{i=1}^r\frac{c_n\rho^n}{a_{k_i}B(\rho)^{k_i}}\frac{k_i}{n}.
\end{equation} 

\end{lem}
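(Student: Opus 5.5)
The plan is to reduce everything to the unrerooted leap process and then average over the rerooting step. Let $\hat\pi_n$ be the distribution on $\cC_n$ produced by the leap generator \emph{before} the final rerooting. Exactly as in the proof of Lemma~\ref{lem:distort} (unlabeled case), a rooted map $\hat\gamma\in\cC_n$ whose root-block has size $k$ is obtained at a given trial only via its unique decomposition: core $\alpha\in\cA_k$ and pendant maps $\beta_1,\ldots,\beta_k\in\cB$ of total size $n$. That decomposition is output with probability $\frac{1}{a_k}\frac{\rho^n}{B(\rho)^k}$. Dividing by the success probability $q_n$ (the computation of Proposition~\ref{prop:linear} carries over verbatim, as stated in Lemma~\ref{lem:success_map}), we get
\[
\hat\pi_n(\hat\gamma)=\frac{1}{q_n}\frac{\rho^n}{a_kB(\rho)^k}.
\]
Note that this depends only on the size of the root-block of $\hat\gamma$.

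Next I would analyze the rerooting. It chooses one of the $2n$ (edge, direction) pairs uniformly. For a fixed unrooted map $M$ underlying $\gamma$, consider the $2n$ rooted maps obtained by rooting $M$ at each oriented edge, counted with multiplicity: several oriented edges may give the same rooted map $\gamma$. We then have
\[
\pi_n'(\gamma)=\sum_{\hat\gamma}\hat\pi_n(\hat\gamma)\,\frac{\#\{\text{oriented edges of }\hat\gamma\text{ whose rerooting gives }\gamma\}}{2n}.
\]
The key symmetry argument is to reverse the sum. The pairs $(\hat\gamma,e)$ with $\mathrm{reroot}(\hat\gamma,e)=\gamma$ are in bijection with the oriented edges $e'$ of $\gamma$, by rerooting $\gamma$ at $e'$. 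This holds because rooted maps have no nontrivial root-preserving automorphism, so the correspondence between oriented edges of $\gamma$ and rootings is well defined even when $M$ has symmetries. Hence
\[
\pi_n'(\gamma)=\frac1{2n}\sum_{e'}\hat\pi_n(\gamma^{e'}),
\]
where $\gamma^{e'}$ is $\gamma$ rerooted at the oriented edge $e'$.

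Now group the oriented edges $e'$ according to the block of $\gamma$ containing them. Blocks partition the edge set, and the blocks of $\gamma$ are the same as those of $\gamma^{e'}$ (the block structure is intrinsic to the unrooted map). The root-block of $\gamma^{e'}$ is the block containing $e'$, so if that block has size $k_i$ then $\hat\pi_n(\gamma^{e'})=\frac{1}{q_n}\frac{\rho^n}{a_{k_i}B(\rho)^{k_i}}$. The $i$th block contributes $2k_i$ oriented edges, which gives
\[
\pi_n'(\gamma)=\frac{1}{q_n}\sum_{i=1}^r\frac{k_i}{n}\frac{\rho^n}{a_{k_i}B(\rho)^{k_i}}.
\]
Finally, multiplying and dividing by $|\cC_n|=c_n$ yields \eqref{eq:pi}.

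The main obstacle is justifying the reversal step rigorously, i.e., handling maps with symmetries so that multiplicities are counted correctly. I would do this by working throughout with pairs (rooted map, oriented edge) rather than with unrooted maps. A second, smaller point to check is that the block sizes here are counted in edges, so that a block with $k_i$ edges corresponds to a core of size $k_i$ in the decomposition $\cC=\cA\circ\cB$, which is consistent with the conventions of $\cA$ and $\cB=\cZ\times(1+\cC)^2$.
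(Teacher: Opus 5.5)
Your proposal is correct and follows essentially the same route as the paper. Both compute the pre-rerooting probability $\frac{1}{q_n}\frac{\rho^n}{a_kB(\rho)^k}$ as in Lemma~\ref{lem:distort}, then sum over the $2n$ rerootings of $\gamma$ grouped by block, each weighted by $1/(2n)$. Your bijection between pairs $(\hat\gamma,e)$ and oriented edges $e'$ of $\gamma$ also makes explicit how multiplicities from nontrivial automorphisms are handled, a point the paper's proof passes over silently.
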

\begin{proof}
As before, if we let $\pi_n^{\mathrm{first}}(\gamma)$ be the probability that $\gamma$ is drawn at the first trial, then we have 
$\pi_n'(\gamma)=\frac1{q_n}\pi_n^{\mathrm{first}}(\gamma)$.  
Let $b_1,\ldots,b_r$ be the blocks of $\gamma$ in size-decreasing order. 
For $i\in[1..r]$, there are $2k_i$ rerootings of $\gamma$ with the root in $b_i$. Let $M'$ be any of these rerooted maps (forgetting the primary root of $M$). By the same arguments as in the proof of Lemma~\ref{lem:distort}, the probability that $M'$ is drawn in the first trial equals  
$\frac{\rho^n}{a_{k_i}B(\rho)^{k_i}}$, and conditionally the probability that it is rerooted as $M$ is $1/(2n)$. Hence the contribution to $\pi_n'(\gamma)$ from first rooting in $b_i$ is equal to  $\frac{2k_i}{2n}\frac{\rho^n}{a_{k_i}B(\rho)^{k_i}}$.
\end{proof}

We then have
\begin{equation}\label{eq:dtv_maps}
\dtv(\pi_n,\pi_n')=\frac1{2}\sum_{\kk\in\Pi_n}\sum_{\gamma\in\cC_{n,\kk}}\Big|\frac1{c_n}-\pi_n'(\gamma)\Big|=\frac1{2}\sum_{\kk\in\Pi_n}\frac{c_{n,\kk}}{c_n}|1-d_{n,\kk}|.
\end{equation}

Our aim is to show that $\dtv(\pi_n,\pi_n')\sim d/n^{1/3}$ for some explicit constant $d>0$.     
We first need some estimates on the distortion factor $d_{n,\kk}$. 

\begin{lem}
Let $\od_{n,k}:=\frac1{q_n}\frac{c_n\rho^n}{a_kB(\rho)^k}\frac{k}{n}$. Then, for $k=n/\mu+xn^{2/3}$,  
\begin{equation}\label{eq:asympt_dnk_maps}
\od_{n,k}= 1 + \frac{7}{2}\mu x n^{-1/3}+O(n^{-1/2}). 
\end{equation}
uniformly for $x$ in any fixed compact. 
Moreover, for $k=\Theta(n)$,  
\begin{equation}\label{eq:estimate_dnk_maps}
\od_{n,k}=1+O(xn^{-1/3})+O(n^{-1/2}).
\end{equation}


For $\kk\in\Pi_n$, let $e_{n,\kk}=d_{n,\kk}-\od_{n,k_1}=\frac1{q_n}\sum_{i=2}^r\frac{c_n\rho^n}{a_{k_i}B(\rho)^{k_i}}\frac{k_i}{n}$. 
Then there is a universal constant $K>0$ such that (with the convention $k_2=0$ if $r=1$)
\begin{equation}\label{eq:boundenk}
e_{n,\kk}\leq K (k_2/n)^{5/2}.
\end{equation}
\end{lem}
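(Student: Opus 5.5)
We work directly from the transfer estimates $c_n=\frac{\kappa_C}{\Gamma(-3/2)}\rho^{-n}n^{-5/2}(1+O(1/n))$ and $a_k=\frac{\kappa_A}{\Gamma(-3/2)}\rho_A^{-k}k^{-5/2}(1+O(1/k))$. These follow from the singular expansions above with $s=-3/2$, since $A(y)$ and $C(z)$ are algebraic with unique dominant singularities. Recall $B(\rho)=\rho_A$ and, from Lemma~\ref{lem:success_map}, $q_n=\frac1\mu(1+O(n^{-1/2}))$. For $k=\Theta(n)$ this gives
\[
\od_{n,k}=\mu\,\frac{\kappa_C}{\kappa_A}\Big(\frac{n}{k}\Big)^{-5/2}\frac{k}{n}\,\big(1+O(n^{-1/2})\big)=\mu\,\frac{\kappa_C}{\kappa_A}\Big(\frac{k}{n}\Big)^{7/2}\big(1+O(n^{-1/2})\big).
\]
The key input is the footnote identity $\kappa_C/\kappa_A=\mu^{5/2}$. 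We check it directly: $\kappa_C/\kappa_A=\frac{8}{3}\cdot\frac{81}{8\sqrt3}=9\sqrt3=3^{5/2}$, and $\mu=3$. Hence $\od_{n,k}=(\mu k/n)^{7/2}(1+O(n^{-1/2}))$.

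Substituting $k=n/\mu+xn^{2/3}$ gives $\mu k/n=1+\mu x n^{-1/3}$. Expanding $(1+\mu x n^{-1/3})^{7/2}=1+\frac72\mu x n^{-1/3}+O(x^2n^{-2/3})$ then yields~\eqref{eq:asympt_dnk_maps}, uniformly for $x$ in a compact set, since $n^{-2/3}=O(n^{-1/2})$. For~\eqref{eq:estimate_dnk_maps} we use the same formula. When $k=\Theta(n)$, $\mu k/n$ ranges over a compact subset of $(0,\infty)$, so by the mean value theorem $(\mu k/n)^{7/2}-1=O(|\mu k/n-1|)=O(|x|n^{-1/3})$ with $x:=(k-n/\mu)n^{-2/3}$. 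The multiplicative error $(1+O(n^{-1/2}))$ contributes an additive $O(n^{-1/2})$ because $\od_{n,k}=O(1)$ in this range.

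For~\eqref{eq:boundenk} we need a bound valid for all $k\geq1$, not only $k=\Theta(n)$. Since $a_k\geq c\,\rho_A^{-k}k^{-5/2}$ for all $k\geq1$ with some $c>0$ (the asymptotics plus positivity of the finitely many remaining $a_k$, the support of $A$ containing $\mathbb{N}^*$), and $c_n\rho^n\leq C' n^{-5/2}$, we get uniformly
\[
\frac{c_n\rho^n}{a_kB(\rho)^k}\frac kn\leq K_0\Big(\frac kn\Big)^{7/2}.
\]
Then, using $q_n\geq 1/(2\mu)$ for large $n$ (small $n$ are absorbed into the constant),
\[
e_{n,\kk}\leq 2\mu K_0\sum_{i\geq2}(k_i/n)^{7/2}\leq 2\mu K_0\,(k_2/n)^{5/2}\sum_{i\geq2}\frac{k_i}{n}\leq K(k_2/n)^{5/2},
\]
where the middle step uses $k_i\leq k_2$ for $i\geq2$, and the last uses $\sum_i k_i=n$.

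The main obstacle is making sure all error terms are uniform in the stated ranges. This concerns the lower bound on $a_k$ for small $k$ and the handling of small $n$ in $q_n$, both of which are absorbed into the constants. Everything else reduces to the constant identity $\mu\kappa_C/\kappa_A\cdot\mu^{-7/2}=1$, which is what makes the leading term of $\od_{n,k}$ exactly $1$.
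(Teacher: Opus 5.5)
Your proposal is correct and follows essentially the same route as the paper. You use the transfer asymptotics for $c_n$ and $a_k$, together with $1/q_n=\mu+O(n^{-1/2})$ and the identity $\kappa_C/\kappa_A=\mu^{5/2}$, to reduce $\od_{n,k}$ to $(\mu k/n)^{7/2}(1+O(n^{-1/2}))$, and the uniform bound $a_kB(\rho)^k=\Theta(k^{-5/2})$ to control $e_{n,\kk}$. Your one-line estimate $\sum_{i\geq 2}k_i^{7/2}\leq k_2^{5/2}\sum_{i\geq 2}k_i\leq n\,k_2^{5/2}$ is a shortcut for the paper's induction proving $s_{n,m}(\beta)\leq n\,m^{\beta-1}$. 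Your explicit checks ($9\sqrt3=3^{5/2}$, and positivity of all $a_k$ and of $q_n$) fill in details the paper leaves implicit.
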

\begin{proof}
We have (using the fact that $\kappa_C/\kappa_A=\mu^{5/2}$)
\begin{align}
\od_{n,k}&=\frac1{q_n}\frac{c_n\rho^n}{a_kB(\rho)^k}\frac{k}{n}\\
&=\big(\mu+O(n^{-1/2})\big)\cdot \frac{\kappa_C}{\kappa_A}(k/n)^{7/2}\big(1+O(1/n)\big)\\
&=\mu\frac{\kappa_C}{\kappa_A}\frac1{\mu^{-7/2}}(1+x\mu n^{-1/3})^{7/2}\big(1+O(n^{-1/2})\big)\\
&=(1+x\mu n^{-1/3})^{7/2}\big(1+O(n^{-1/2})\big).
\end{align}
which yields the first two statements. 

Regarding the third statement, since $a_kB(\rho)^k=\Theta(k^{-5/2})$ and $c_n\rho_n=\Theta(n^{-5/2})$, there exists a universal constant $K'$ such that
\[ 
\Lambda:=\sum_{i=2}^r\frac{c_n\rho^n}{a_{k_i}B(\rho)^{k_i}}\frac{k_i}{n}\leq K'n^{-7/2}\sum_{i=2}^rk_i^{7/2}. 
\]
For $\beta>1$ let $s_{n,m}(\beta)$ be the maximum of $S=\sum_ik_i^{\beta}$ over all tuples of positive integers in decreasing order, whose first summand is $m$,  
and whose sum is bounded by $n$. 
Then clearly $\Lambda\leq K'n^{-7/2}s_{n,k_2}(7/2)$. 
It is then easy to check by induction on the number of summands that
 $s_{n,m}(\beta)\leq n\ \!\,m^{\beta-1}$. Indeed, the bound is clearly true for tuples with one summand. Then, for at least two summands, letting $S'$ be the sum of the $k_i^{\beta}$ without the first summand, 
  by induction we have 
 \[
 S=m^{\beta}+S'\leq m^{\beta}+ (n-m) m^{\beta-1}\leq n\ \,m^{\beta-1}.
 \]
 This yields $\Lambda\leq K'n^{-7/2}s_{n,k_2}(7/2)\leq K'(k_2/n)^{5/2}$, 
and concludes the proof of the third statement, using the fact that $1/q_n=O(1)$. 
\end{proof}

Hereafter we denote by $X_n$ the size of the largest block, and $Y_n$ the size of the second largest block (with the convention $Y_n=0$ if there is a single block) in a random map of size $n$. The following limit statement is proved in \cite[Theo.~7]{banderier2001random}, where $\Ai(x)=\frac1{\pi}\sum_{k\geq 1}(-1)^{k-1}x^k\frac{\Gamma(1+2k/3)}{\Gamma(1+k)}\mathrm{sin}(2\pi k/3)$ is the so-called map-Airy probability distribution on $\mathbb{R}$.

\begin{lem}\label{lem:asympt_Pnk}
 Let $\mu=3$ and $c=\frac{3}{4}2^{2/3}$. Then for $k=n\mu+xn^{2/3}$ with $x$ in a fixed compact, we have
\[
\mathbb{P}(X_n=k)\sim n^{-2/3}c\ \!\Ai(cx).
\]
\end{lem}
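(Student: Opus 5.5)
The plan is to analyse first the \emph{root-block} size of a uniform rooted map. Then I transfer the result to the largest block $X_n$ by a rerooting argument. (I read the statement with $k=n/\mu+xn^{2/3}$, consistently with \eqref{eq:asympt_dnk_maps}.)

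\textbf{Step 1: exact expression for the root-block size.} Let $R_n$ be the size of the root-block. From $\cC=\cA\circ\cB$ we get
\[
\mathbb{P}(R_n=k)=\frac{a_k\,[z^n]B(z)^k}{c_n}.
\]
I write $[z^n]B(z)^k=\frac{1}{2i\pi}\oint B(z)^k z^{-n-1}\,\mathrm{d}z$. Then I use the singular expansion of $B$ at $\rho=1/12$:
\[
B(z)/B(\rho)=1-\mu(1-z/\rho)+\tfrac{4}{3}\cdot\tfrac{27}{16}\cdot\tfrac{16}{27}\cdots(1-z/\rho)^{3/2}+O\big((1-z/\rho)^2\big),
\]
where only the constant $\beta$ in front of $(1-z/\rho)^{3/2}$ matters. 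This gives
\[
\rho^n B(z)^k z^{-n}/B(\rho)^k\approx\exp\Big(-(k\mu-n)(1-z/\rho)+k\beta(1-z/\rho)^{3/2}\Big).
\]
With $k=n/\mu+xn^{2/3}$ we have $k\mu-n=\mu x n^{2/3}$. I then rescale $1-z/\rho=u\,n^{-2/3}$ on a Hankel-type contour around $\rho$. The integrand converges to $\exp(-\mu x u+(\beta/\mu)u^{3/2})$, and the integral is $n^{-2/3}$ times the Fourier--Laplace inversion of a stable law of index $3/2$. This gives
\[
\frac{\rho^n[z^n]B(z)^k}{B(\rho)^k}\sim n^{-2/3}\,\lambda\,\mathcal{S}(\lambda' x)
\]
for explicit constants $\lambda,\lambda'$, where $\mathcal{S}$ is the density of that stable law, expressible as the series defining $\Ai$. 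Combining with $a_kB(\rho)^k\sim\frac{\kappa_A}{\Gamma(-3/2)}k^{-5/2}$ and $c_n\rho^n\sim\frac{\kappa_C}{\Gamma(-3/2)}n^{-5/2}$, and using $\kappa_C/\kappa_A=\mu^{5/2}$, the prefactors cancel to $\mu^{-1}$. Hence
\[
\mathbb{P}(R_n=k)\sim\mu^{-1}n^{-2/3}c\,\Ai(cx),
\]
with $c=\frac34 2^{2/3}$ obtained after plugging in the numeric constants.

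\textbf{Step 2: from the root-block to the largest block.} Every block of size $k>n/2$ is necessarily the unique largest block. Rerooting at a uniform edge, which preserves the uniform distribution on $\cC_n$, places the root in that block with probability $k/n$. Therefore, for $k>n/2$,
\[
\mathbb{P}(R_n=k)=\frac{k}{n}\,\mathbb{P}(X_n=k).
\]
Since $k/n\to1/\mu$ in the window, this yields $\mathbb{P}(X_n=k)\sim n^{-2/3}c\,\Ai(cx)$, which is the claim.

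\textbf{Main obstacle.} The hard part is the uniform justification of Step 1. One must show that the tails of the contour (away from the $n^{-2/3}$ neighbourhood of $\rho$) are negligible, and that the error terms $O((1-z/\rho)^2)$ and the $k$-dependence stay uniform for $x$ in compacts. I would handle this with the $\Delta$-analyticity of $B$ together with $|B(z)|<B(\rho)$ elsewhere on $|z|=\rho$ (aperiodicity/Daffodil lemma), giving exponentially small contributions there. In the intermediate range I would bound $|B(z)/B(\rho)|^k|z/\rho|^{-n}$ by $\exp(-c'|u|^{3/2})$, exactly as in the proof of \cite[Theo.~7]{banderier2001random}, which one may ultimately just invoke.
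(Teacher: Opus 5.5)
Your Step~2 fails as written. You correctly read the window as $k=n/\mu+xn^{2/3}$, but with $\mu=3$ this means $k/n\to 1/3<1/2$. So the hypothesis $k>n/2$ is never met in the window, and the identity $\mathbb{P}(R_n=k)=\frac{k}{n}\mathbb{P}(X_n=k)$ is unjustified there: a block of size $\approx n/3$ can coexist with other blocks of size $\geq k$. What rerooting invariance actually gives is $\mathbb{P}(R_n=k)=\frac{k}{n}\mathbb{E}[N_k]$, where $N_k$ is the number of blocks of size $k$. Since $\mathbb{E}[N_k]\geq\mathbb{P}(X_n=k)$, combining this with Step~1 yields only the upper bound $\limsup_n n^{2/3}\mathbb{P}(X_n=k)\leq c\,\Ai(cx)$. (This free inequality is exactly the one the paper uses in the footnote to Lemma~\ref{lem:estimate_Pnk}.)

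The matching lower bound needs an input you have not provided: two linear-size blocks essentially never coexist. Let $F_{n,k}$ be the event that there are two distinct blocks of size $\geq k$. Then $0\leq \mathbb{E}[N_k]-\mathbb{P}(X_n=k)\leq \mathbb{E}[N_k\mathbf{1}_{F_{n,k}}]\leq \frac{n}{k}\mathbb{P}(F_{n,k})$, and you must show $\mathbb{P}(F_{n,k})=o(n^{-2/3})$ uniformly for $x$ in a compact. This is a large-deviation statement that does not follow from the local analysis of Step~1. Lemma~\ref{lem:addblocks} with $m=2$ supplies it, and its proof does not rely on the present lemma. On $F_{n,k}$ two blocks have total size $\geq 2k=2n/3+O(n^{2/3})$, which exceeds $n/\mu+n^{2/3+\epsilon}$ for $n$ large. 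Hence $\mathbb{P}(F_{n,k})$ is stretched-exponential, and your transfer then goes through.

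For comparison, the paper does not prove this lemma; it simply quotes \cite[Theo.~7]{banderier2001random}. Your Step~1 reconstructs the coalescing-saddle analysis of that reference for the root block, and it is acceptable as a sketch with the tail estimates deferred to it. There is one bookkeeping slip. The ratio $a_kB(\rho)^k/(c_n\rho^n)$ tends to $\frac{\kappa_A}{\kappa_C}\mu^{5/2}=1$, not to $\mu^{-1}$. The factor $1/\mu$ comes instead from the limiting Hankel integral $\frac{1}{2i\pi}\int e^{-\mu xu+(\beta/\mu)u^{3/2}}\,\mathrm{d}u$, with $\beta=4$. After the substitution $v=\mu u$ this integral is $\frac{1}{\mu}$ times a probability density in $x$. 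So your final formula $\mathbb{P}(R_n=k)\sim\frac{1}{\mu}n^{-2/3}c\,\Ai(cx)$ is nonetheless the right one.
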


We also need tail-bound estimates on $X_n$ and $Y_n$, as stated in the next two lemmas. 
A sequence $u_n\geq 0$ is called \emph{stretched-exponential} if $u_n=O(\exp(-n^a))$ for some positive constant $a$. 

\begin{lem}\label{lem:addblocks}
Let $\epsilon>0$ and $m\geq 1$ be fixed. Let $\cE_{n}$ be the event that, in a  random rooted map with $n$ edges, there exist $m$ blocks $b_1,\ldots,b_m$ such that $|b_1|+\cdots+|b_m|>n/\mu+n^{2/3+\epsilon}$.   
Then $\mathbb{P}(\cE_{n})$ is stretched-exponential. 
\end{lem}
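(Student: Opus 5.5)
We bound $\mathbb{P}(\cE_n)$ by a first-moment (union bound) argument over block-size configurations, using the composition scheme $\cC=\cA\circ\cB$ and its iteration. A rooted map with $n$ edges decomposes recursively along its blocks. Consider first the case $m=1$: we need that a block of size $k>n/\mu+n^{2/3+\epsilon}$ is stretched-exponentially unlikely. Mark a block of size $k$ by rerooting. The number of pairs (map of size $n$, marked block of size $k$, rooted in that block) is $c_{n,k}^{\mathrm{root}}=a_k[z^n]B(z)^k$. Hence
\[
\mathbb{P}(\exists\text{ block of size }k)\le \frac{2n}{c_n}\,a_k[z^n]B(z)^k .
\]
Writing $[z^n]B(z)^k=\rho^{-n}B(\rho)^k\,\mathbb{P}(T_1+\cdots+T_k=n)$ with $T_i$ i.i.d.\ of law $|\Gamma B(\rho)|$, and using $a_kB(\rho)^k=\Theta(k^{-5/2})$ and $c_n\rho^n=\Theta(n^{-5/2})$, we get a bound of the form $O(n)\cdot\mathbb{P}(T_1+\cdots+T_k=n)$.

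The main step, and the main obstacle, is a large deviation bound for this heavy-tailed sum. Since $T$ has tail $\mathbb{P}(T=j)=\Theta(j^{-5/2})$ (from the $(1-12z)^{3/2}$ term of $B$), it has mean $\mu$ but infinite variance, and lies in the domain of attraction of a $3/2$-stable law. For $k>n/\mu+n^{2/3+\epsilon}$ we need $T_1+\cdots+T_k=n<k\mu-\mu n^{2/3+\epsilon}$. This is a \emph{lower} deviation, on the side where the stable law has light tails because the jumps are positive. I would use a Chernoff bound with $e^{-\lambda T}$, $\lambda>0$. By the singular expansion,
\[
\mathbb{E}(e^{-\lambda T})=B(\rho e^{-\lambda})/B(\rho)=1-\mu\lambda+c'\lambda^{3/2}+O(\lambda^2)
\]
with $c'>0$, so that $\log\mathbb{E}(e^{-\lambda(T-\mu)})\le C\lambda^{3/2}$ for small $\lambda$. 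Then
\[
\mathbb{P}\big(\textstyle\sum T_i\le k\mu-y\big)\le\exp\big(kC\lambda^{3/2}-\lambda y\big).
\]
Taking $y\asymp n^{2/3+\epsilon}$, $k\asymp n$ and $\lambda=n^{-2/3+\epsilon'}$ with $0<\epsilon'<2\epsilon$ gives exponent $-\Omega(n^{\epsilon+\epsilon'})$ against $+O(n^{3\epsilon'/2})$. This is stretched-exponential. Summing over the $\le n$ values of $k$ and multiplying by the polynomial prefactors preserves this.

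For general $m$, I would iterate the marking: mark $m$ distinct blocks $b_1,\ldots,b_m$. The iterated decomposition (cut out the marked blocks successively, each being attached at a corner of what remains) shows that the number of maps with $m$ marked blocks of sizes $k_1,\ldots,k_m$ is at most $\mathrm{poly}(n)\prod_j a_{k_j}\cdot[z^{n}]B(z)^{k_1+\cdots+k_m}$, up to the combinatorics of attachment corners, which costs only polynomial factors in $n$. This gives the same bound with $k=\sum k_j$. Alternatively, one could argue that the nonroot marked blocks hang in the $\cB$-pieces, and use the $m=1$ estimate on the sum. Summing over the $O(n^m)$ tuples then keeps the bound stretched-exponential. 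The delicate points are justifying the injection into $\cA^{m}$ times $\cB$-sequences with polynomial overcounting, and the Chernoff estimate with exponent $3/2$. I expect the latter to be the heart of the argument, and would check it carefully via the $\Delta$-analytic expansion of $B$ at $\rho$, restricted to real $\lambda>0$.
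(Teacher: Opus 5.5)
Your argument is correct, but it takes a different route from the paper's.

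\textbf{The paper's route.} The paper does not use the composition scheme here. It invokes the construction of \cite{AB19}: a random map with $n$ edges is obtained from $2n+1$ i.i.d.\ critical Boltzmann (possibly empty) 2-connected maps, conditioned on a \L{}ukasiewicz condition of probability $\Theta(n^{-5/2})$. A union bound over the $\binom{2n+1}{m}$ choices of $m$ blocks then reduces $\cE_n$ to a lower deviation for the total doubled size of the $2n+1-m$ \emph{unmarked} blocks (mean $2/3$, tail $\Theta(i^{-3/2})$). That large-deviation estimate is quoted from \cite{gao1999size}.

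\textbf{Your route.} You stay inside $\cC=\cA\circ\cB$ and reroot at the marked blocks. The deviation then falls on the $K=k_1+\cdots+k_m$ i.i.d.\ $\cB$-pieces hanging from the \emph{marked} blocks, which must sum to $n<K\mu-\mu n^{2/3+\epsilon}$, and you prove this lower deviation by hand.
\begin{itemize}
\item The Chernoff step is sound. From the given expansion of $B$ one gets $\mathbb{E}(e^{-\lambda(T-\mu)})=1+4\lambda^{3/2}+O(\lambda^2)$, and any $0<\epsilon'<\min(2\epsilon,2/3)$ works.
\item The prefactors are polynomial, since $a_k\rho_A^k=O(1)$ and $1/(c_n\rho^n)=O(n^{5/2})$.
\end{itemize}

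\textbf{What each buys.} The paper's route handles every $m$ at once with no extra combinatorics, because blocks are exchangeable items of an i.i.d.\ sequence; the price is reliance on two external results. Yours is self-contained, using only $c_{n,k}=a_k[z^n]B(z)^k$ and expansions already in the paper. The price is the marking step for $m\ge 2$, which you should write out. It goes by induction on $m$:
\begin{itemize}
\item Reroot at $b_1$, at a cost of a factor $2n$.
\item Assign each other marked block to one of the $2k_1$ corner-maps; there are at most $(2n)^{m-1}$ choices.
\item Each affected corner-map is itself a rooted map with fewer marked blocks, so bound it by the induction hypothesis.
\item Absorb the modified $\cB$-pieces using the coefficientwise inequalities $z(1+C(z))\le B(z)$ and $z\le B(z)$.
\end{itemize}
This yields exactly your claimed bound $\mathrm{poly}(n)\prod_j a_{k_j}[z^n]B(z)^{K}$.
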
 
\begin{proof}
This is easy to establish via the probabilistic approach of~\cite{AB19}. To build a random  map of size $n$, one draws a sequence of $2n+1$ independent calls to critical Boltzmann samplers for the class $1+\cA$ of (possibly empty) 2-connected maps. If the sequence $s_1,\ldots,s_{2n+1}$  of doubled sizes of the drawn blocks is such that $s_1+\cdots+s_i\geq i$ for all $1\leq i\leq 2n$ and $s_1+\cdots+s_{2n+1}=2n$ (Łukasiewicz condition) then one returns the map made of the blocks assembled along the tree that is associated to the Łukasiewicz walk. Moreover, the Łukasiewicz condition is met with probability $\Theta(n^{-5/2})$. Let $Z$ be a random variable giving the doubled size of a 2-connected map under the Boltzmann distribution at $\rho_A=4/27$; and denote by $Z_n$ the sum of $n$ independent copies of $Z$.  
Then, by the above discussion and a union-bound,
\[
\mathbb{P}(\cE_{n})\leq\Theta(n^{5/2})\binom{2n+1}{m}\mathbb{P}\big(Z_{2n+1-m}<2(n-n/\mu-n^{2/3+\epsilon})\big),
\]
The variable $Z\geq 0$ satisfies $\mathbb{E}(Z)=\frac{2\rho_{A}A'(\rho_A)}{1+A(\rho_A)}=2/3$ and the asymptotics $a_k\rho_A^k=\Theta(k^{-5/2})$ ensures that $\mathbb{P}(Z>i)=\Theta(i^{-3/2})$. The event $\tilde{\cE}_n:=\{Z_{2n+1-m}<2(n-n/\mu-n^{2/3+\epsilon})\}$ is thus equivalent
 to $\{Z_{2n+1-m}<(2n+1-m)\mathbb{E}(Z)+\eta-2n^{2/3+\epsilon}\}$, with $\eta=\frac{2m-2}{3}$ fixed.  
 Large deviation results (see e.g.~\cite[Theo.1(iii)b]{gao1999size}) then ensure that $\mathbb{P}\big(\tilde{\cE}_n\big)$ is stretched-exponential. Hence $\mathbb{P}(\cE_{n})$ is also stretched-exponential. 
\end{proof}

\begin{lem}\label{lem:estimate_Pnk}
Let $E_n=[n/10,n/2]$. Then $\mathbb{P}(X_n\notin E_n)=O(1/n^{1/2})$. 
Furthermore, for $k\in E_n$, letting $x=(k-n/\mu)/n^{2/3}$, we have
\begin{equation}\label{eq:estime_Pnk}
\mathbb{P}(X_n=k)=\frac{1}{n^{2/3}}O\big((1+|x|)^{-5/2}\big).
\end{equation}
\end{lem}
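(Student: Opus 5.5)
The plan is to bound $\mathbb{P}(X_n=k)$ from above by a root-block count. We then estimate that count through a local limit bound for sums of i.i.d. variables in the domain of attraction of a $3/2$-stable law. Two cases are treated separately: small $k$ via the \L ukasiewicz representation of~\cite{AB19}, and large $k$ via Lemma~\ref{lem:addblocks}.

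\emph{Step 1 (rerooting bound).} Let $\gamma\in\cC_n$ have largest block $b$ of size $k$. Among the $2n$ rootings of $\gamma$, exactly $2k$ have their root in $b$, and for those the root-block has size $k$. Hence
\[
\mathbb{P}(X_n=k)\le \frac{n}{k}\,\frac{a_k\,[z^n]B(z)^k}{c_n}.
\]
We write $[z^n]B(z)^k=\rho^{-n}B(\rho)^k\,\mathbb{P}(T_k=n)$, where $T_k$ is a sum of $k$ independent copies of $|\Gamma B(\rho)|$. These copies have mean $\mu=3$ and, by the singular exponent $3/2$ of $B$, a local tail $\Theta(j^{-5/2})$. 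Using $a_kB(\rho)^k=\Theta(k^{-5/2})$ and $c_n\rho^n=\Theta(n^{-5/2})$, we obtain for $k=\Theta(n)$
\[
\mathbb{P}(X_n=k)=O\big(\mathbb{P}(T_k=n)\big).
\]

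\emph{Step 2 (local bound for $T_k$).} The key estimate is a uniform local bound
\[
\mathbb{P}(T_k=n)\le C\,k^{-2/3}\big(1+|n-\mu k|/k^{2/3}\big)^{-5/2}.
\]
In the central window it follows from the stable local limit theorem (Gnedenko), which gives $O(k^{-2/3})$. For $n-\mu k\gg k^{2/3}$ it follows from the ``one big jump'' local large deviation estimate $\mathbb{P}(T_k=n)=O\big(k\,(n-\mu k)^{-5/2}\big)$. For $n-\mu k\ll -k^{2/3}$ the left tail is light, so the bound holds a fortiori. I would cite a standard local large deviation result for subexponential/regularly varying summands, in the same spirit as the use of~\cite{gao1999size} in Lemma~\ref{lem:addblocks}.

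For $k\in E_n$ we have $k=\Theta(n)$ and $n-\mu k=-\mu x n^{2/3}$. Substituting into Step 1 gives exactly~\eqref{eq:estime_Pnk}. Establishing this uniform local bound, especially on the big-jump side, is the main technical obstacle.

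\emph{Step 3 (outside $E_n$).} For $k>n/2$ we apply Lemma~\ref{lem:addblocks} with $m=1$, since $n/2>n/\mu+n^{2/3+\epsilon}$. This shows $\mathbb{P}(X_n>n/2)$ is stretched-exponential.

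For $X_n<n/10$ we use the \L ukasiewicz representation from the proof of Lemma~\ref{lem:addblocks}. The event requires $2n+1$ i.i.d. copies of $Z$, each truncated at doubled size $\le n/5$, to sum to $2n$. Their expected sum is only about $\tfrac{4}{3}n$, so at least seven summands must be of order $n$. A union bound, using $\mathbb{P}(Z>i)=\Theta(i^{-3/2})$, bounds this by $O\big((n\cdot n^{-3/2})^{7}\big)$, after routing the remaining deviation through the moderately large values with a Chernoff bound on the truncated variables. Dividing by the conditioning probability $\Theta(n^{-5/2})$ gives $O(n^{-1})$, which is well within $O(n^{-1/2})$.

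As a fallback, one can instead sum the Step~2 bound over $k<n/10$, where $|x|\gtrsim n^{1/3}$. This gives $n\cdot n^{-2/3}(n^{1/3})^{-5/2}=O(n^{-1/2})$, which is exactly the claimed rate, but it requires controlling the $n/k$ and $a_k$ factors when $k$ is small.
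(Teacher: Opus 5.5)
\paragraph{Your route to the local bound.}
Your Steps 1--2 are a valid route to \eqref{eq:estime_Pnk}, and they differ from the paper's. The paper uses the same rerooting inequality $\mathbb{P}(X_n=k)\le\frac{n}{k}\,\mathbb{P}(\tilde{X}_n=k)$ (in a footnote), but then directly invokes the uniform Airy-type estimate for the root-block size in \cite[Theo.~4]{banderier2001random} and the tail of $\Ai$. You instead rewrite the root-block probability as $\Theta\big((n/k)^{5/2}\big)\,\mathbb{P}(T_k=n)$, and then use the stable local limit theorem together with a one-big-jump local large-deviation bound (Doney-type). Your version is more self-contained and carries over to any critical map scheme whose pendant law is $3/2$-stable. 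One detail remains to be written out: on the light side you need a \emph{local} bound, not just a tail bound. You get it by splitting $T_k$ into two halves and combining $\sup_m\mathbb{P}(T_{\lceil k/2\rceil}=m)=O(k^{-2/3})$ with the light left tail of the other half. Your treatment of $\mathbb{P}(X_n>n/2)$ coincides with the paper's.

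\paragraph{The gap: $\mathbb{P}(X_n<n/10)$.}
The paper takes $\mathbb{P}(X_n<n/10)=O(n^{-1/2})$ from the proof of Lemma~3.5 in \cite{AB19}, and your own argument for it does not work.
\begin{itemize}
\item Doubled block sizes are capped at $n/5$, and the required excess over the mean $\frac43 n$ is about $\frac23 n$. This forces only four summands of order $n$, since $3\cdot\frac n5<\frac23 n<4\cdot\frac n5$. Your count of seven comes from capping at $n/10$ instead of $n/5$.
\item With four big summands, your union bound gives $\binom{2n+1}{4}\,O(n^{-3/2})^4=O(n^{-2})$. Dividing by the \L ukasiewicz probability $\Theta(n^{-5/2})$ yields $O(n^{1/2})$, which is vacuous.
\item The fallback fails too, and not for bookkeeping reasons. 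For small $k$ the rerooting inequality is useless: already $\mathbb{P}(\tilde{X}_n=1)=\Theta(1)$, so it only gives $\mathbb{P}(X_n=1)=O(n)$. Concretely, $(n/k)^{7/2}\cdot k\,n^{-5/2}=n\,k^{-5/2}$, and summed over $k\ge 1$ this is $\Theta(n)$, dominated by $k=O(1)$.
\end{itemize}

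\paragraph{How to repair it.}
Any of the following works.
\begin{itemize}
\item Keep the exact constraint $s_1+\cdots+s_{2n+1}=2n$ in the union bound. Once all other values are given, the fourth big value is determined, so it costs $\max_{v>\epsilon n}\mathbb{P}(Z=v)=O(n^{-5/2})$ instead of $O(n^{-3/2})$. The unconditioned probability becomes $O(n^{4}\cdot n^{-9/2}\cdot n^{-5/2})=O(n^{-3})$, and the ratio is $O(n^{-1/2})$.
\item Since the event is invariant under cyclic shifts, the cycle lemma lets you divide by $\mathbb{P}(s_1+\cdots+s_{2n+1}=2n)=\Theta(n^{-3/2})$ rather than by $\Theta(n^{-5/2})$. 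This also gives $O(n^{-1/2})$.
\item Split the range. On $[\delta n,n/10]$, use your Steps 1--2: there $(n/k)^{7/2}=O(1)$ and $\mathbb{P}(T_k=n)=O(k\,(n-3k)^{-5/2})=O(n^{-3/2})$. Reserve the crude union bound for $k<\delta n$ with $\delta<1/15$, where at least six summands of order $n$ are forced.
\end{itemize}
In every version, configurations with fewer big values still need the truncated Chernoff/Fuk--Nagaev estimate you mention, with the truncation level $\epsilon n$ chosen small enough.
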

\begin{proof}
It is shown in the proof of Lemma~3.5 in~\cite{AB19} that $\mathbb{P}(X_n\leq n/10)=O(1/n^{1/2})$, and it follows from Lemma~\ref{lem:addblocks} (for $m=1$) that $\mathbb{P}(X_n\geq n/2)$ is stretched-exponential. 
The estimate~\eqref{eq:estime_Pnk} follows\footnote{In fact it gives~\eqref{eq:estime_Pnk} for $\mathbb{P}(\tilde{X}_n=k)$ where $\tilde{X}_n$ is the size of the 2-connected block containing the root-edge; since $\mathbb{P}(\tilde{X}_n=k)\geq \frac{k}{n}\mathbb{P}(X_n=k)$ one has $\mathbb{P}(X_n=k)\leq 10\,\mathbb{P}(\tilde{X}_n=k)$ for $k\in E_n$, hence~\eqref{eq:estime_Pnk} also holds for $\mathbb{P}(X_n=k)$.} from ~\cite[Theo.4]{banderier2001random} and asymptotics of the Airy function.   
\end{proof}

\begin{prop}\label{prop:dtv_maps}
We have
\[
\dtv(\pi_n,\pi_n')\sim \frac{d}{n^{1/3}}
\]
where $d=\frac{7}{2^{2/3}}\int_{\mathbb{R}} \Ai(x)|x|\mathrm{d}x\approx 3.03$. 
\end{prop}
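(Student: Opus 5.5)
Starting from \eqref{eq:dtv_maps}, I will group the partitions $\kk\in\Pi_n$ according to the largest block size $k=k_1$ and write $d_{n,\kk}=\od_{n,k_1}+e_{n,\kk}$. Then
\[
\dtv(\pi_n,\pi_n')=\frac12\sum_{k}\mathbb{P}(X_n=k)\,|1-\od_{n,k}|+R_n,\qquad |R_n|\le \frac12\,\mathbb{E}\big(e_{n,\kk(\gamma)}\big),
\]
where $\gamma$ is a uniform random map in $\cC_n$. The first step is to show $R_n=o(n^{-1/3})$. By \eqref{eq:boundenk}, $R_n\le \frac K2\,\mathbb{E}((Y_n/n)^{5/2})$. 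I will apply Lemma~\ref{lem:addblocks} with $m=2$. Outside a stretched-exponential event, $X_n+Y_n\le n/\mu+n^{2/3+\epsilon}$. On this event, if also $X_n\ge n/\mu-n^{2/3+\epsilon}$, then $Y_n\le 2n^{2/3+\epsilon}$, which contributes $O(n^{-5/6+5\epsilon/2})=o(n^{-1/3})$. Otherwise $X_n<n/\mu-n^{2/3+\epsilon}$. By Lemma~\ref{lem:estimate_Pnk} (using the $(1+|x|)^{-5/2}$ decay together with $\mathbb{P}(X_n\notin E_n)=O(n^{-1/2})$), this has probability $O(n^{-1/2})+O(n^{-3\epsilon/2})$. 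I then need this to be $o(n^{-1/3})$, which is a problem for small $\epsilon$. I will therefore instead bound $e_{n,\kk}$ crudely by $O(1)$ only when $X_n\notin E_n$ or on the stretched-exponential event. On the remaining range, $Y_n\le n/\mu+n^{2/3+\epsilon}-X_n=O(n^{2/3}(1+|x|))$ with $x=(X_n-n/\mu)/n^{2/3}$, so $e_{n,\kk}=O(n^{-5/6}(1+|x|)^{5/2})$. Weighted by \eqref{eq:estime_Pnk}, the sum over $k\in E_n$ is $n^{-5/6}\cdot O(\#\{x\text{-range}\}/n^{2/3})=n^{-5/6}\,O(n^{1/3})=O(n^{-1/2})$. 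This gives $R_n=O(n^{-1/2})$.

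The main term is handled as in Proposition~\ref{prop:asympt_uniform}. For fixed $M$ and $|x|\le M$, Lemma~\ref{lem:asympt_Pnk} combined with \eqref{eq:asympt_dnk_maps} gives
\[
\mathbb{P}(X_n=k)\,|1-\od_{n,k}|\sim n^{-2/3}\,c\,\Ai(cx)\,\tfrac72\,\mu|x|\,n^{-1/3}.
\]
Summing over $k$, with spacing $n^{-2/3}$ in $x$, gives a Riemann sum:
\[
n^{-1/3}\,\tfrac72\,\mu\int_{-M}^M c\,\Ai(cx)\,|x|\,dx .
\]
For the tail $M<|x|$ with $k\in E_n$, \eqref{eq:estimate_dnk_maps} and \eqref{eq:estime_Pnk} bound the summand by $n^{-2/3}\,O((1+|x|)^{-5/2})\,(|x|n^{-1/3}+n^{-1/2})$. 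The sum over the tail is therefore $n^{-1/3}\,O(M^{-1/2})+O(n^{-1/2})$, which is uniformly small once $M$ is large. The contribution of $k\notin E_n$ is $O(n^{-1/2})$, since $|1-\od_{n,k}|$ is bounded there. For this I need $\od_{n,k}=O(1)$ for $k\le n$, which follows from $(k/n)^{7/2}\le1$ and the $\Theta$-asymptotics of $a_k$ for large $k$ (small $k$ is harmless because of the factor $k/n$). Letting $M\to\infty$, I will substitute $u=cx$ to get
\[
\frac72\,\mu\,c^{-1}\int|u|\Ai(u)\,du=\frac72\cdot3\cdot\frac{4}{3\cdot2^{2/3}}\int|u|\Ai=\frac{14}{2^{2/3}}\int|u|\Ai .
\]
The factor $\frac12$ in \eqref{eq:dtv_maps} then yields $d=\frac{7}{2^{2/3}}\int\Ai(x)|x|\,dx$.

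I expect the main obstacle to be the error term $R_n$ coming from the non-largest blocks, namely ensuring that $\mathbb{E}((Y_n/n)^{5/2})=o(n^{-1/3})$. The key point is to couple the bound on $Y_n$ with the deviation of $X_n$ through Lemma~\ref{lem:addblocks}, and then to use the polynomial decay in \eqref{eq:estime_Pnk}. I will also note that the scaling in Lemma~\ref{lem:asympt_Pnk} should be read as $k=n/\mu+xn^{2/3}$, consistently with \eqref{eq:asympt_dnk_maps}.
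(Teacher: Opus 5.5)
Your proposal is correct and follows essentially the paper's proof: you use the same sandwich via $d_{n,\kk}=\od_{n,k_1}+e_{n,\kk}$, the same Riemann-sum and $O(M^{-1/2})$ tail treatment of the largest-block term, and the same control of $e_{n,\kk}$ through Lemma~\ref{lem:addblocks} with $m=2$ combined with \eqref{eq:estime_Pnk}. The only slip is that on the good event you actually get $Y_n\le n^{2/3}(n^{\epsilon}+|x|)$ rather than $O(n^{2/3}(1+|x|))$; this just adds an $O(n^{-5/6+5\epsilon/2})$ term, which is harmless for small $\epsilon$, exactly as in the paper's computation.
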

\begin{proof}


Since $d_{n,\kk}=\od_{n,k_1}+e_{n,\kk}$, we have 
\[
\big|1-\od_{n,k_1}\big|-e_{n,\kk}\leq |1-d_{n,\kk}|\leq \big|1-\od_{n,k_1}\big|+e_{n,\kk}.
\] Hence,  
\[
D_1(n)-D_2(n)\leq \dtv(\pi_n,\pi_n')\leq D_1(n)+D_2(n),
\]
where 
\[
D_1(n):=\frac1{2}\sum_k\mathbb{P}(X_n=k)\ \!\big|1-\od_{n,k}\big|,\ \ D_2(n):=\frac1{2}\sum_{\kk\in\Pi_n}\frac{c_{n,\kk}}{c_n}e_{n,\kk}.
\]
We will show that $D_1(n)\sim d/n^{1/3}$, and then that $D_2(n)=o(n^{-1/3})$.

By the first statement in Lemma~\ref{lem:estimate_Pnk} we have $\sum_{k\notin E_n}\mathbb{P}(X_n=k)=O(n^{-1/2})$. Moreover  $\od_{n,k}=O(1)$ over all $n,k$. Hence $\frac1{2}\sum_{k\notin E_n}\mathbb{P}(X_n=k)\ \!\big|1-\od_{n,k}\big|=O(n^{-1/2})$. Hence, if we let $\tilde{D}_1(n):=\frac1{2}\sum_{k\in E_n}\mathbb{P}(X_n=k)\ \!\big|1-\od_{n,k}\big|$, then 
showing $D_1(n)\sim d/n^{1/3}$ reduces to showing $\tilde{D}_1(n)\sim d/n^{1/3}$. We fix $M>0$ and let $E_{n}(M)$ be the subset of $k\in E_n$ such that $x=(k-n/\mu)/n^{2/3}$ satisfies $|x|\leq M$. 
We let $\tilde{D}_1(n,M)$ be the contribution to $\tilde{D}_1(n)$ given by $k\in E_n(M)$. 
By Lemma~\ref{lem:asympt_Pnk} and~\eqref{eq:asympt_dnk_maps} we have 
\[
\tilde{D}_1(n,M)\sim d(M)/n^{1/3},
\]
where 
\[
d(M)=\frac{7\mu}{4}\int_{-M}^M c\ \!\Ai(cx)|x|\mathrm{d}x=\frac{7}{2^{2/3}}\int_{-cM}^{cM} \Ai(x)|x|\mathrm{d}x
\]
For $k\geq 0$, with $x=(k-n/\mu)/n^{2/3}$ the property $k\in E_n$ 
is equivalent to $x\in[-\frac{7}{30} n^{1/3},\frac1{6}n^{1/3}]$. Hence, by~\eqref{eq:estime_Pnk} and~\eqref{eq:estimate_dnk_maps}, letting $J_n=[-\frac{7}{30} n^{1/3},\frac1{6}n^{1/3}]$ and $J_{n,M}=J_n\backslash [-M,M]$, we have, for $n$ large enough,
\begin{align*}
\tilde{D}_1(n)-\tilde{D}_1(n,M)&=n^{-1/3}\cdot \ O\left(\int_{J_{n,M}} |x|^{-3/2}
\mathrm{d}x\right)\\
&=n^{-1/3}\ O\big( M^{-1/2}\big).
\end{align*}

We thus obtain
\[
\tilde{D}_1(n)=n^{-1/3}\left(d(M)+O(M^{-1/2})\right)
\]
Letting $M\to\infty$ we conclude that $\tilde{D}_1(n)\sim d/n^{1/3}$. 

Regarding $D_2(n)$, by~\eqref{eq:boundenk} we have
\[
D_2(n)\leq\frac{K}{2}\sum_{\kk\in\Pi_n}\frac{c_{n,\kk}}{c_n}(k_2/n)^{5/2}.
\]
and, using $\mathbb{P}(X_n\notin E_n)=O(n^{-1/2})$ we obtain
\[
D_2(n)\leq O(n^{-1/2})+\frac{K}{2}\sum_{\kk\in\Pi_n, k_1\in E_n}\frac{c_{n,\kk}}{c_n}(k_2/n)^{5/2}.
\]
By Lemma~\ref{lem:addblocks} (for $m=2$), there is a positive constant $a$ such that 
\[
\sum_{\kk\in\Pi_n, k_1\in E_n}\frac{c_{n,\kk}}{c_n}(k_2/n)^{5/2}\leq O(\exp(-n^a))+\tilde{D}_2(n)
\]
where
\[
\tilde{D}_2(n):=\sum_{k\in E_n}\mathbb{P}(X_n=k)n^{-5/2}\ \!\big|n/\mu-k+n^{2/3+\epsilon}\big|^{5/2}.
\]
Hence, showing $D_2(n)=o(n^{-1/3})$ reduces to showing $\tilde{D}_2(n)=o(n^{-1/3})$. 
Then~\eqref{eq:estime_Pnk} gives
\begin{align*}
\tilde{D}_2(n)&=n^{-5/2}O\left(\int_{J_n}\big|-xn^{2/3}+n^{2/3+\epsilon}\big|^{5/2}\cdot   
\Big((1+|x|)^{-5/2}\ \!\mathrm{d}x\right)\\
&=n^{-5/2}n^{5/3}O\left(   \int_0^{n^{1/3}} (x^{5/2}+n^{5\epsilon/2}) \cdot 
  (1+x)^{-5/2}\ \!\mathrm{d}x\right)\\
&= n^{-5/6}O\Big( n^{1/3} +n^{5\epsilon/2}\Big)\\
&=O(n^{-1/2}),\ \ \mathrm{taking}\ \epsilon\ \mathrm{small\ enough\ (e.g.\ }\epsilon=2/15\mathrm{)}.
\end{align*}

\end{proof}

\subsubsection{Extensions}\label{sec:maps_extensions}
Obtaining a leap generator for random maps is of minor interest, as there is a known exact-size bijective random generator for rooted maps of linear time complexity~\cite{schaeffer1997bijective}. For presentation purpose we chose to give the analysis of this case in detail. Leap generators can actually be applied to any critical map composition scheme $\cC=\cA\circ\cB$ where there is an exact-size linear time generator for the core-class $\cA$, and a Boltzmann sampler for $\cB$ of linear complexity in the size of the output (in general such a sampler for $\cB$ is derived from the composition scheme and the exact-size sampler for $\cA$). Letting 
\[
\mu=\frac{\rho B'(\rho)}{B(\rho)},\ \ \ c=\mu\Big( \frac{b_1}{3b_{3/2}}\Big)^{2/3},
\] 
where $B(z)=b_0-b_1(1-z/\rho)+b_{3/2}(1-z/\rho)^{3/2}+O\big((1-z/\rho)^2\big)$, 
the same analysis will give $\dtv(\pi_n,\pi_n')\sim d/n^{1/3}$, for
\[
d=\frac{7\mu}{4c}\int_{\mathbb{R}} \Ai(x)|x|\mathrm{d}x.
\]

One can thus obtain leap generators for composition schemes where no bijective linear-time  exact-size  sampler for $\cC$ is known. For instance, if we require 2-connected blocks to have at least two edges, then the composed class $\cC$ is made of maps that are bridgeless and loopless.   
As another extension, one can consider the model of block-weighted planar maps~\cite{bonzom2016large,stufler2020limits}, where each map is given a weight $u^{\mathrm{\# blocks}}$. It has been recently studied in detail~\cite{fleurat2024phase}, showing a phase transition at $u=5/9$: the composition scheme is critical for $u<5/9$ and subcritical for $u>5/9$. For $u<5/9$, we can thus obtain leap generators of linear time complexity.
Finally, the approach can also be applied to cubic planar graphs, where $\cA$ is the class of rooted 3-connected cubic planar graphs\footnote{Precisely, one restricts to the class $\cC$ of vertex-rooted cubic planar graphs whose root-vertex is on a 3-connected component. Upon unrooting, the uniform distribution on $\cC_n$ induces a distribution on cubic planar graphs of size $n$ that is asymptotically uniform, at total variation distance $O(n^{-1/2})$ from the uniform distribution.}.
 By Whitney's theorem and duality, this class is in bijection with rooted simple triangulations, for which a bijective random generator is known~\cite{Poulalhon:triang-3connexe+boundary}.

\section{Experiments}\label{sec:experiments}

We conducted several experiments illustrating the practical closeness of the leap distribution to the uniform distribution.

We start with some \emph{exact} experiments on Motzkin walks, for which total variation distance can be exactly computed up to large sizes
(in particular, this is possible thanks to the simple formula $c_{n,k}=\mathrm{Cat}_k\binom{n}{2k}$).  
\begin{figure}
  \centering
\includegraphics[width=12cm]{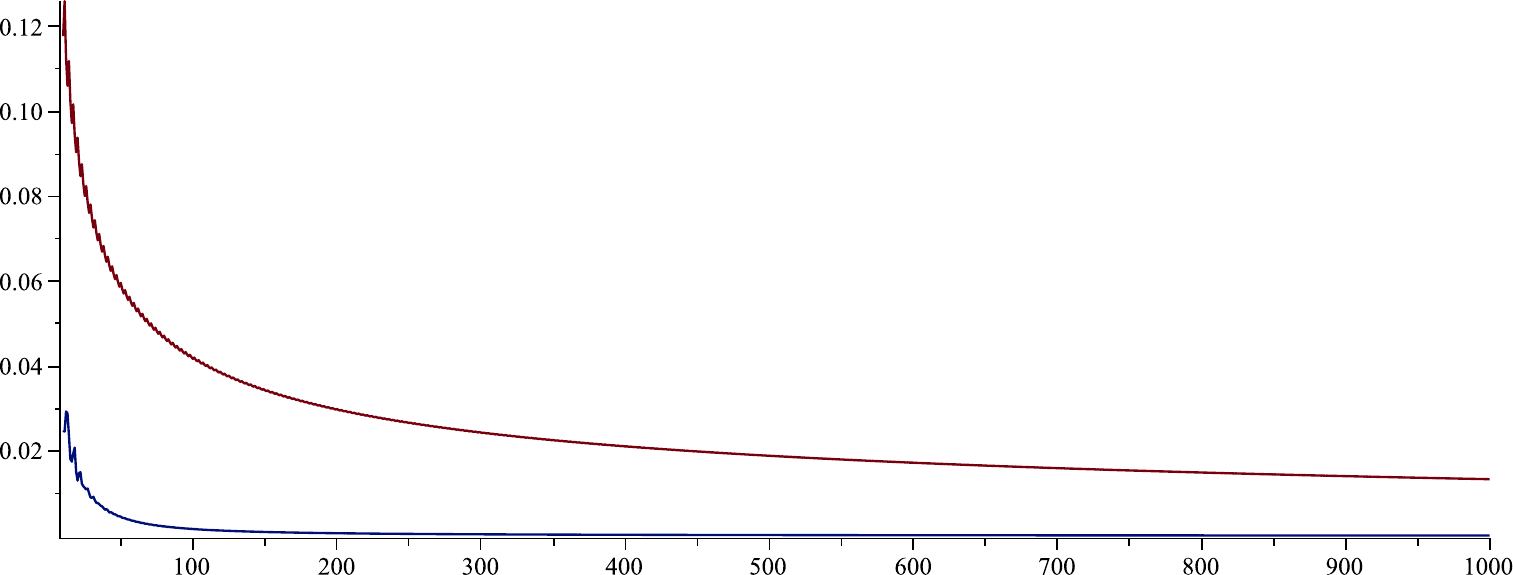}\\[.4cm]
  \caption{Plot of $d_n:=\dtv(\pi_n,\pi_n')$ (red) and $d_n^{\mathrm{rej}}:=\dtv(\pi_n,\pi_n^{\mathrm{rej}})$ (blue)  for Motzkin walks for $n$ from $10$ to $1000$.} \label{fig:dtvmotz}
\end{figure}
\begin{figure}
  \centering
\begin{subfigure}[]{\textwidth}  
\includegraphics[width=\textwidth]{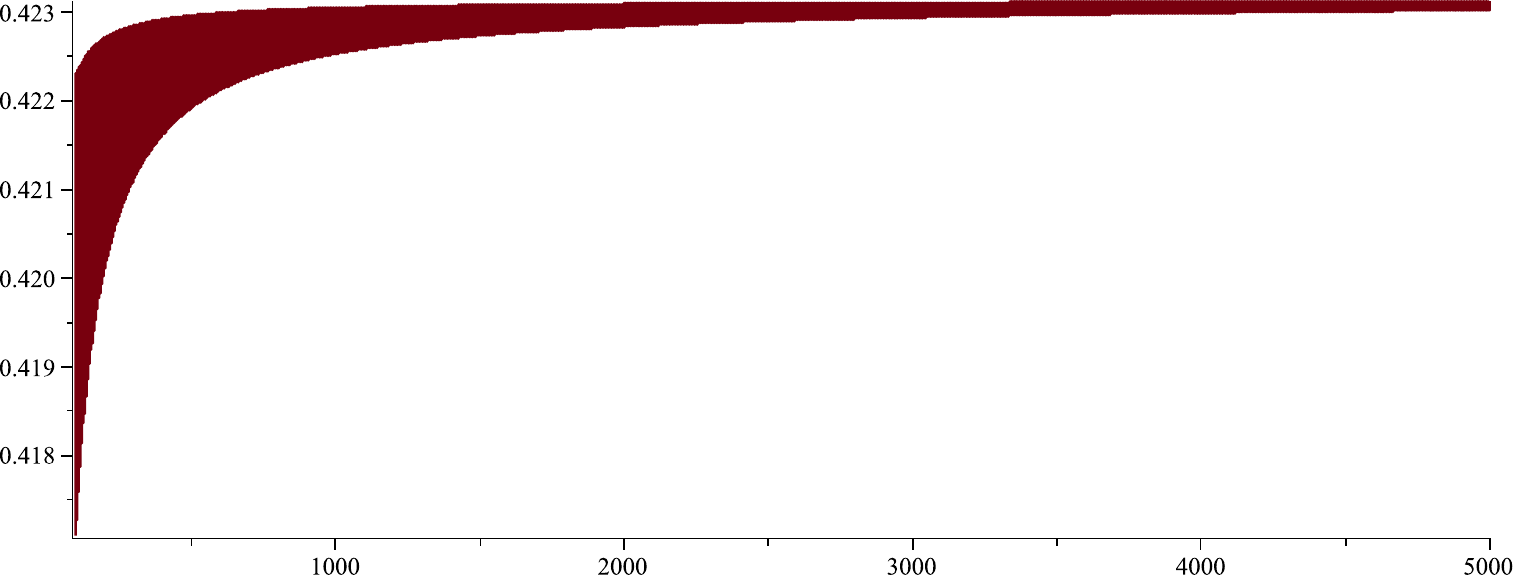}
\caption{}
\end{subfigure}  

\medskip
\medskip

\begin{subfigure}[]{\textwidth}  
\includegraphics[width=\textwidth]{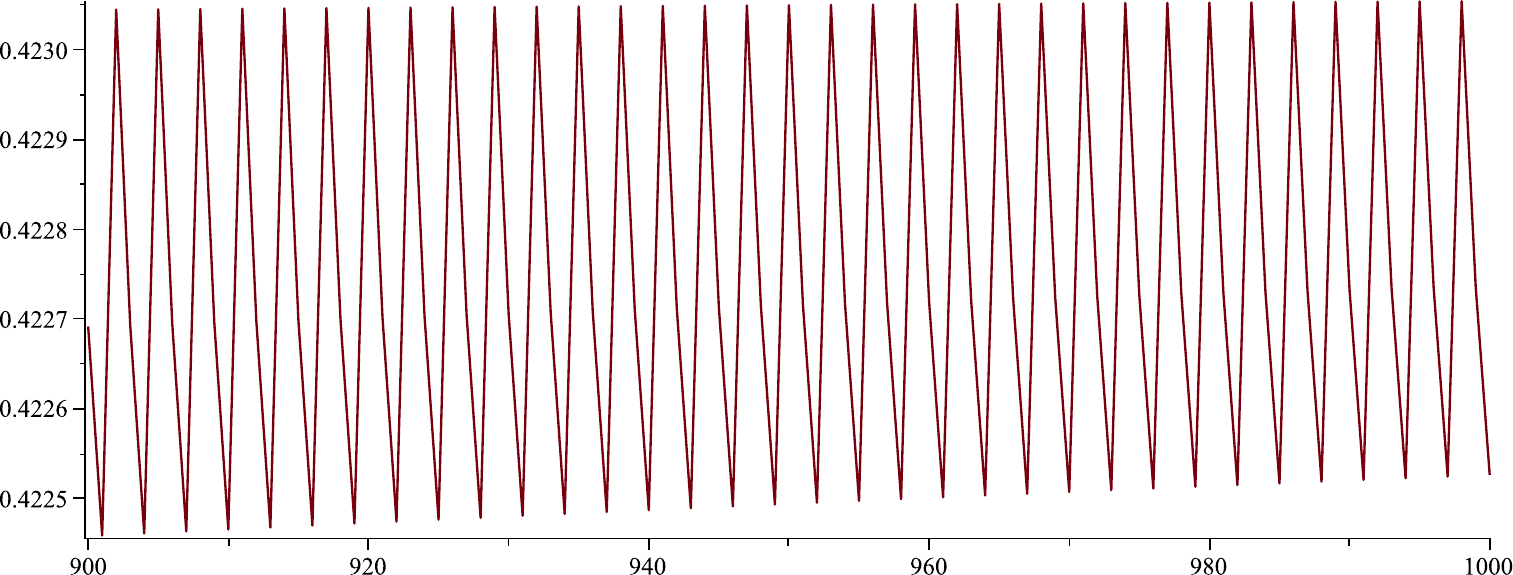} 
\caption{}
  \end{subfigure}  
  \caption{(a) Plot of $n^{1/2}d_n$ for Motzkin walks, for $n$ from $100$ to $5000$. (b)  The part of the plot for $n$ from $900$ to $1000$, showing fluctuations of period 3.}
  \label{fig:dtvmotzsqrt}
\end{figure}
\begin{figure}
  \centering
\includegraphics[width=12cm]{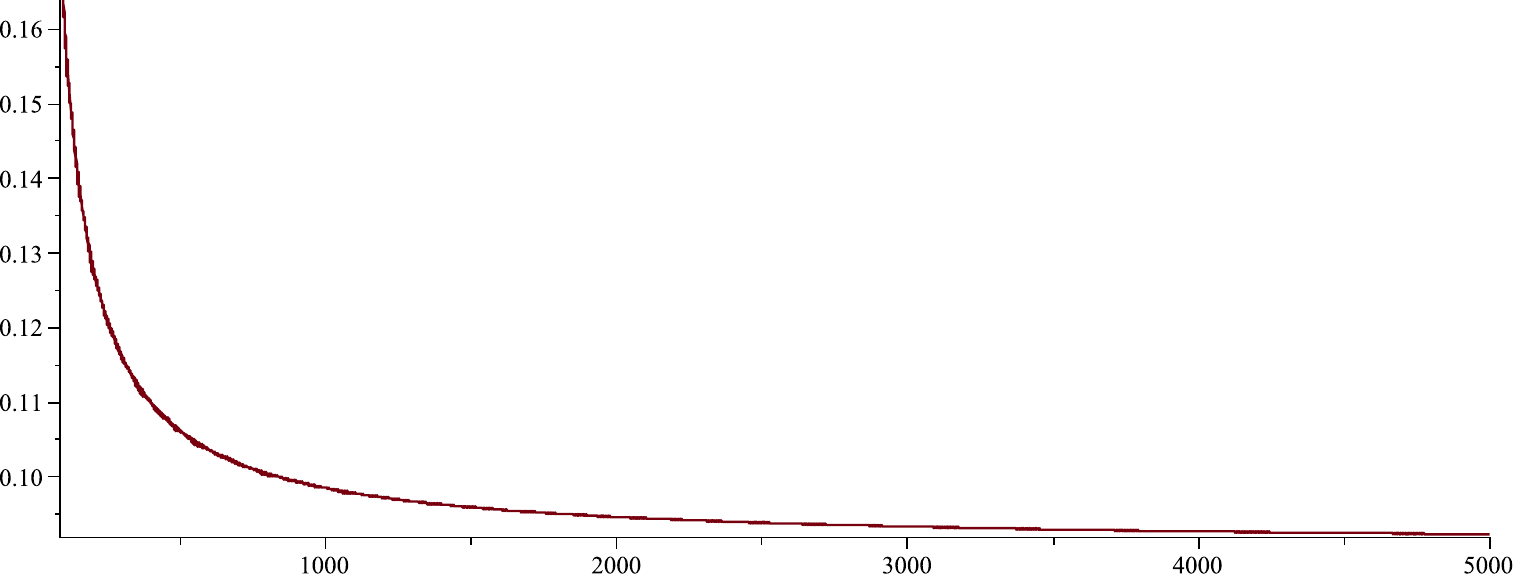}
  \caption{Plot of $n\ \!d_n^{\mathrm{rej}}$ for Motzkin walks, for $n$ from $100$ to $5000$. The horizontal axis is at the height $0.09074$ predicted by Proposition~\ref{prop:asympt_uniform_general} as the limit.} \label{fig:dtvaccmotzsqrt}
\end{figure}
Figure~\ref{fig:dtvmotz} shows the plot of $d_n:=\dtv(\pi_n,\pi_n')$ together with the plot of $d_n^{\mathrm{rej}}:=\dtv(\pi_n,\pi_n^{\mathrm{rej}})$ for first order acceleration of convergence. To verify Proposition~\ref{prop:asympt_uniform}, Figure~\ref{fig:dtvmotzsqrt} shows the plot of $n^{1/2}d_n$. We observe convergence to the  constant $\frac{3}{2}\frac{\sigma}{\sqrt{2\pi\mu}}\approx 0.42314$ predicted by Proposition~\ref{prop:asympt_uniform}, with some small fluctuations of period $3$. These are likely due to whether the theoretical mode $k_0=n/\mu$ (here $\mu=3$) of the core-size distribution occurs at an integer value. Similarly, to verify Proposition~\ref{prop:asympt_uniform_general}, Figure~\ref{fig:dtvaccmotzsqrt} shows the plot of $n\ \!d_n^{\mathrm{rej}}$. Again we see convergence to the constant $\approx 0.09074$ predicted by Proposition~\ref{prop:asympt_uniform_general}. 

By a slight abuse of notation, for $1\leq k\leq n$ we let $\pi_n(k)$ be the probability that the core-size is $k$ under the uniform distribution.  
Similarly we use the notation $\pi_n'(k)$ (resp. $\pi_n^{\mathrm{rej}}(k)$) for the analogous probability under the leap distribution (resp. under the leap distribution with first order acceleration of convergence). Note that $d_n=\frac1{2}\sum_k |\pi_n(k)-\pi'_n(k)|$ and $d_n^{\mathrm{rej}}=\frac1{2}\sum_k |\pi_n(k)-\pi^{\mathrm{rej}}_n(k)|$. 
For $n=5000$, 
Figure~\ref{fig:histogram_core_motz} shows in (a) the plots of $\pi_n(k),\pi_n'(k),\pi_n^{\mathrm{rej}}(k)$ superimposed with the 
Gaussian limit density function. Below, in (b) the plot of $d_{n,k}-1$ is approximately a line of positive slope, in agreement with Lemma~\ref{lem:estimate_dnk}. Therefore the leap distribution gives a slight bias toward a larger core-size, as also appears in the plot of $\pi'_n(k)-\pi_n(k)$ given in (c). On the other hand, in (d) the plot of $d_{n,k}^{\mathrm{rej}}-1$ is approximately a (convex) parabola with negative value at its center, in agreement with~\eqref{eq:dnkrej}. Accordingly, in the plot of $\pi^{\mathrm{rej}}_n(k)-\pi_n(k)$ shown in (e), there are 
two positive bumps on each side of the mode and a negative bump in between, and asymptotically the picture will tend to be mirror-symmetric at the mode.  Thus, on top of reducing the total variation distance, acceleration of convergence (at first order, and more generally at odd orders) makes the perturbation to the core-size (asymptotically) centered at the mode, thus better avoiding a drift effect.   

\begin{figure}
  \centering
\begin{subfigure}[]{\textwidth}
\includegraphics[width=\textwidth]{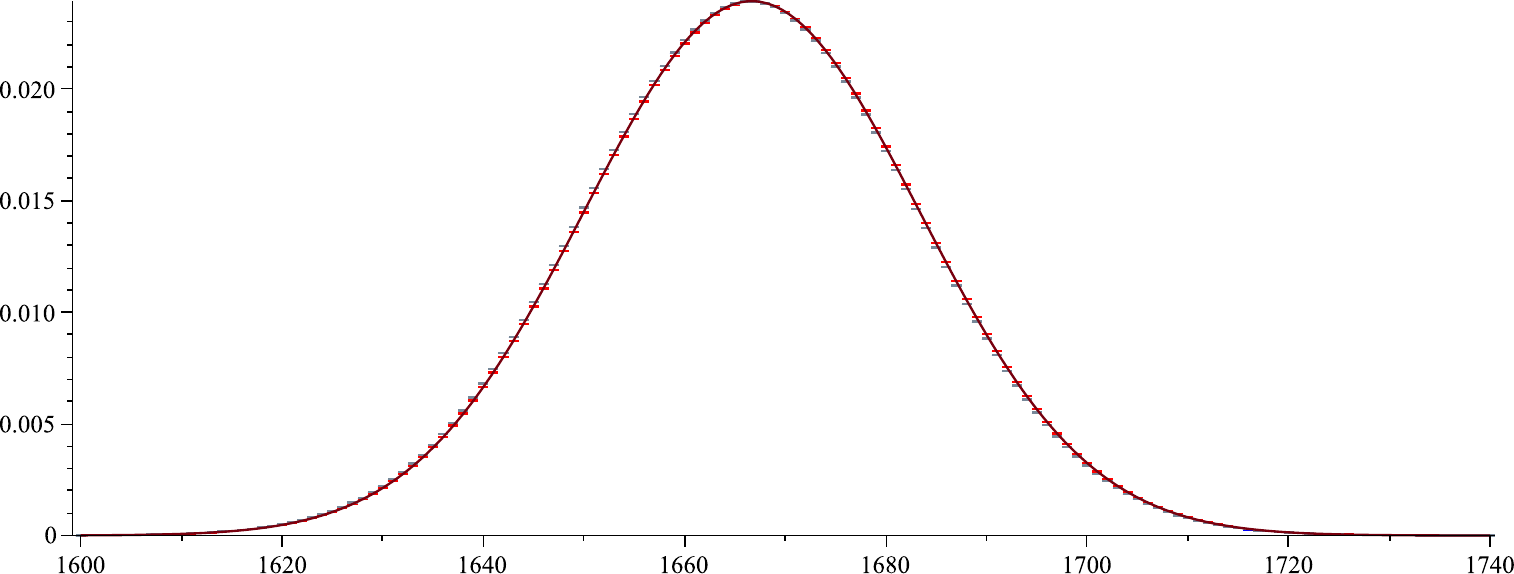}
\caption{}
\end{subfigure}

\bigskip
\medskip

\begin{subfigure}[B]{0.3\textwidth}
\includegraphics[width=\textwidth]{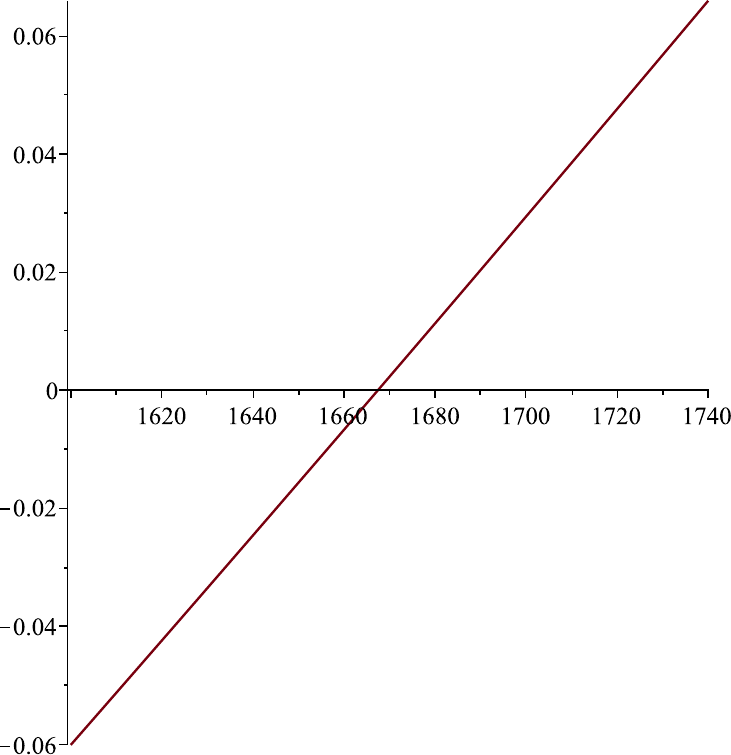}
\caption{}
\end{subfigure}
\hfill
\begin{subfigure}[B]{0.65\textwidth}
\includegraphics[width=\textwidth]{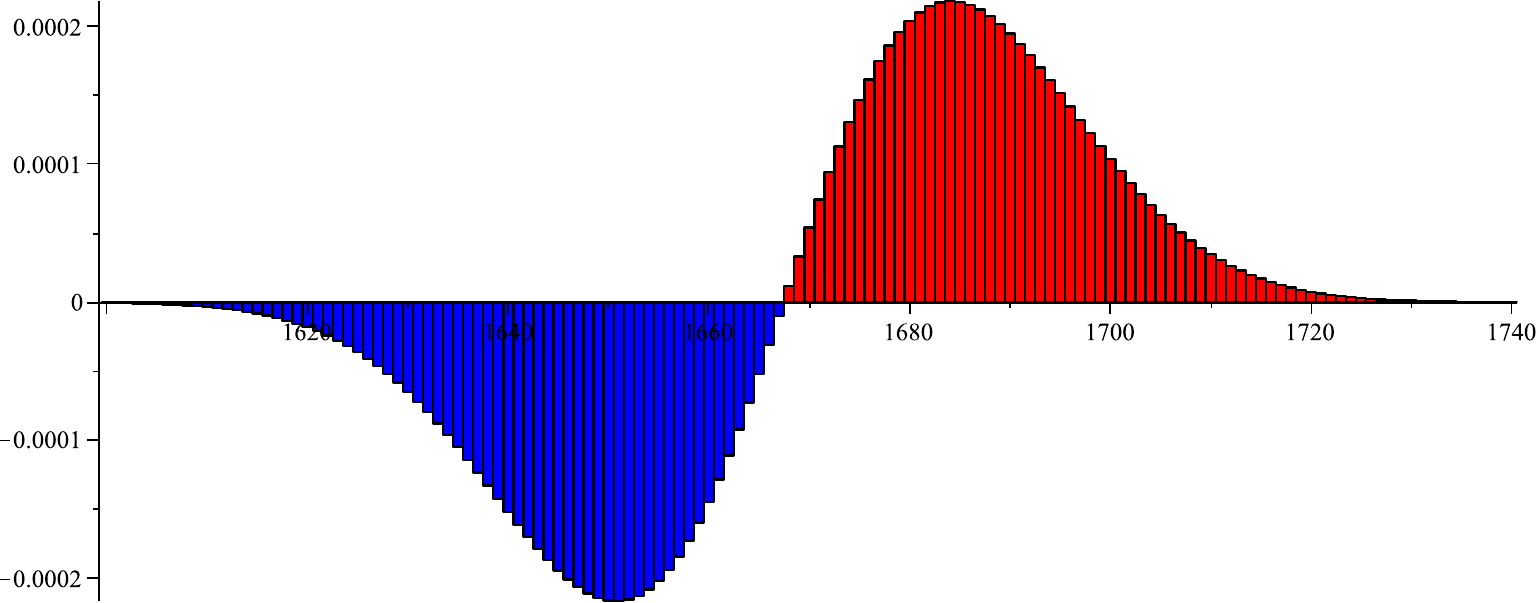}
\caption{}
\end{subfigure}

\bigskip
\medskip

\begin{subfigure}[B]{0.3\textwidth}
\includegraphics[width=\textwidth]{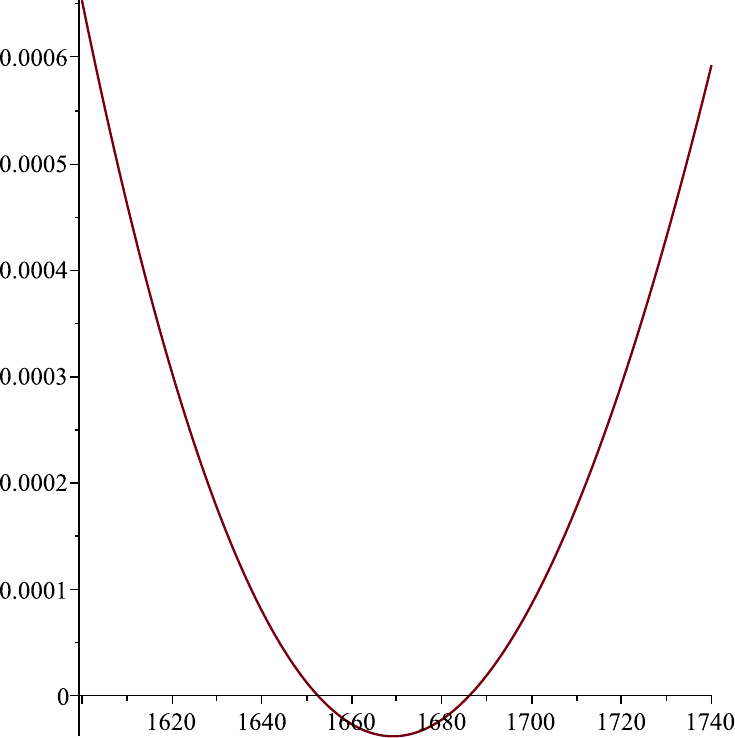}
\caption{}
\end{subfigure}
\hfill
\begin{subfigure}[B]{0.65\textwidth}
\includegraphics[width=\textwidth]{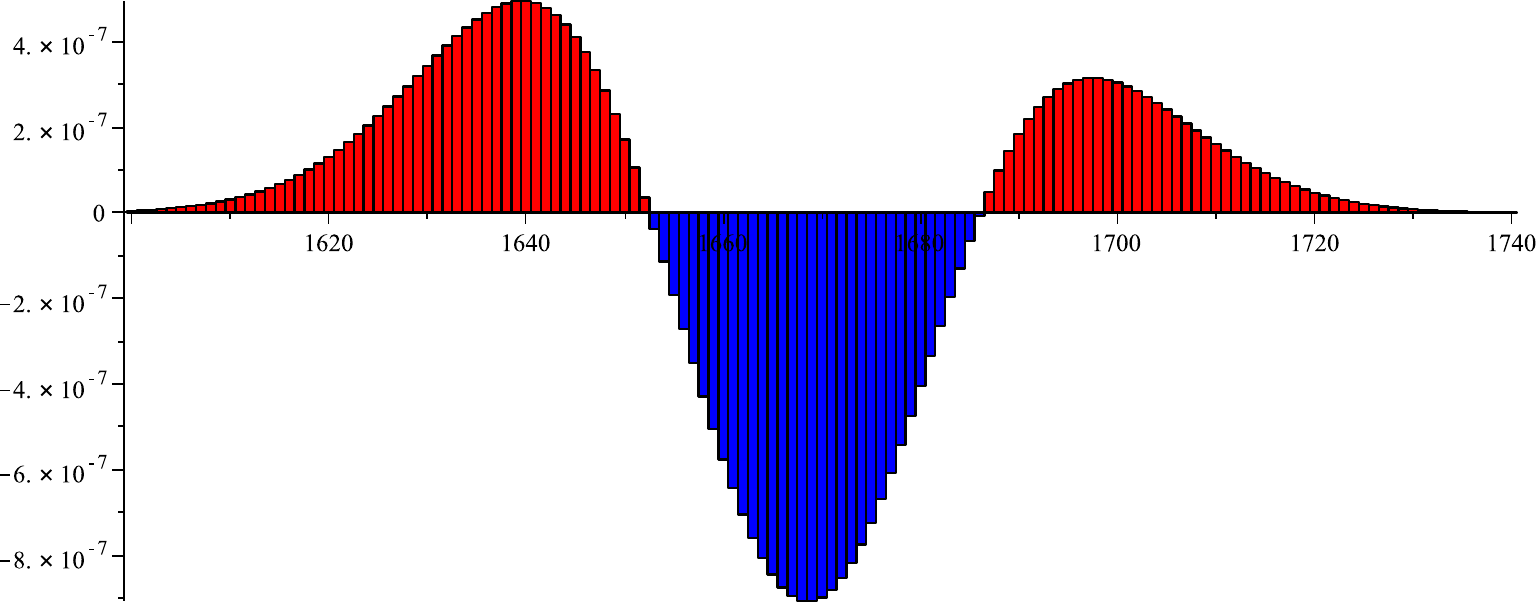}
\caption{}
\end{subfigure}
  \caption[]{Core-size of Motzkin walks of size $n=5000$: for $k$ in the vicinity of $n/\mu=5000/3$,  
  (a) shows the  plots of $\pi_n(k)$ (blue), $\pi_n'(k)$ (red) and $\pi_n^{\mathrm{rej}}(k)$ (gray, indistinguishable from $\pi_n(k)$), superimposed with the Gaussian limit density function, (b) the plot 
 of $d_{n,k}-1$, (c) the plot of $\pi_n'(k)-\pi_n(k)$, whose total absolute area equals $2\,d_n$, (d) the plot 
 of $d_{n,k}^{\mathrm{rej}}-1$, (e) the plot of $\pi_n^{\mathrm{rej}}(k)-\pi_n(k)$, whose total absolute area equals $2\, d_n^{\mathrm{rej}}$.}
  \label{fig:histogram_core_motz}
\end{figure}

\begin{figure}
  \centering
\includegraphics[width=12cm]{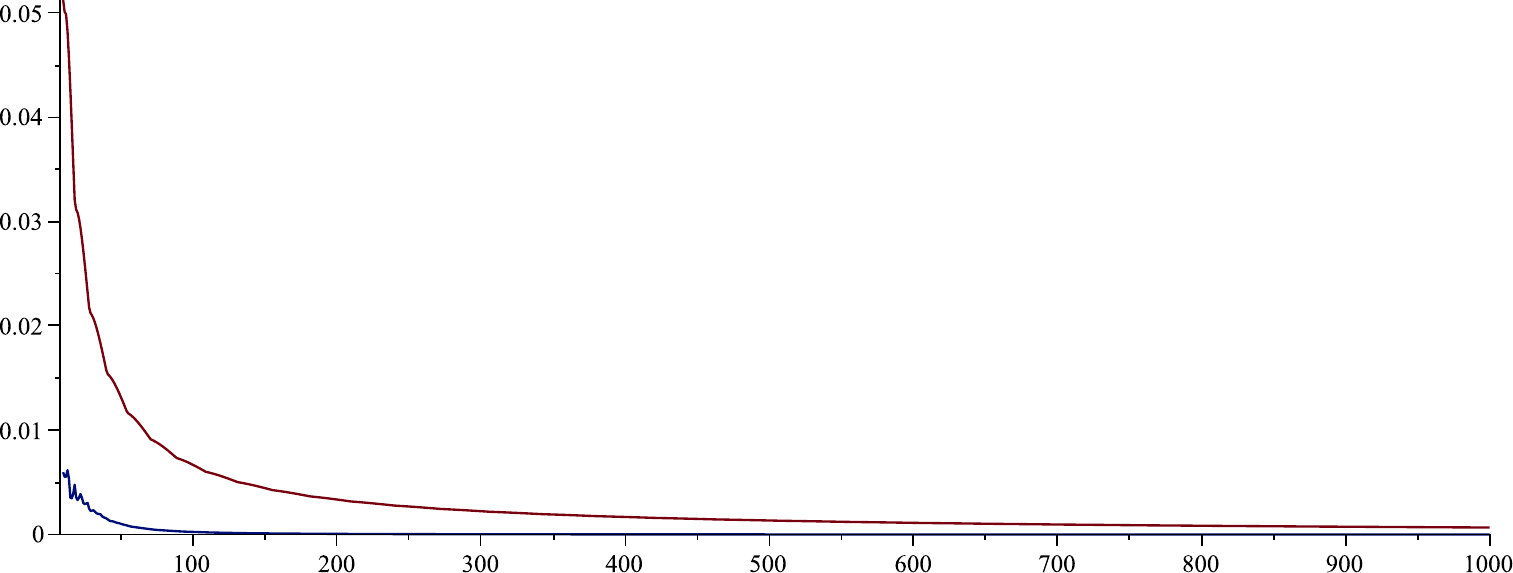}\\[.4cm]
  \caption{Total variation distance for the height of Motzkin walks: plots of $\hat{d}_n:=\dtv(\hat{\pi}_n,\hat{\pi}_n')$ (red) and $\hat{d}_n^{\mathrm{rej}}:=\dtv(\hat{\pi}_n,\hat{\pi}_n^{\mathrm{rej}})$ (blue), for $n$ from $10$ to $1000$. \bigskip} \label{fig:dtvmotzhauteur}
\end{figure}

\begin{figure}
  \centering
  \begin{subfigure}[]{\textwidth}
\includegraphics[width=\textwidth]{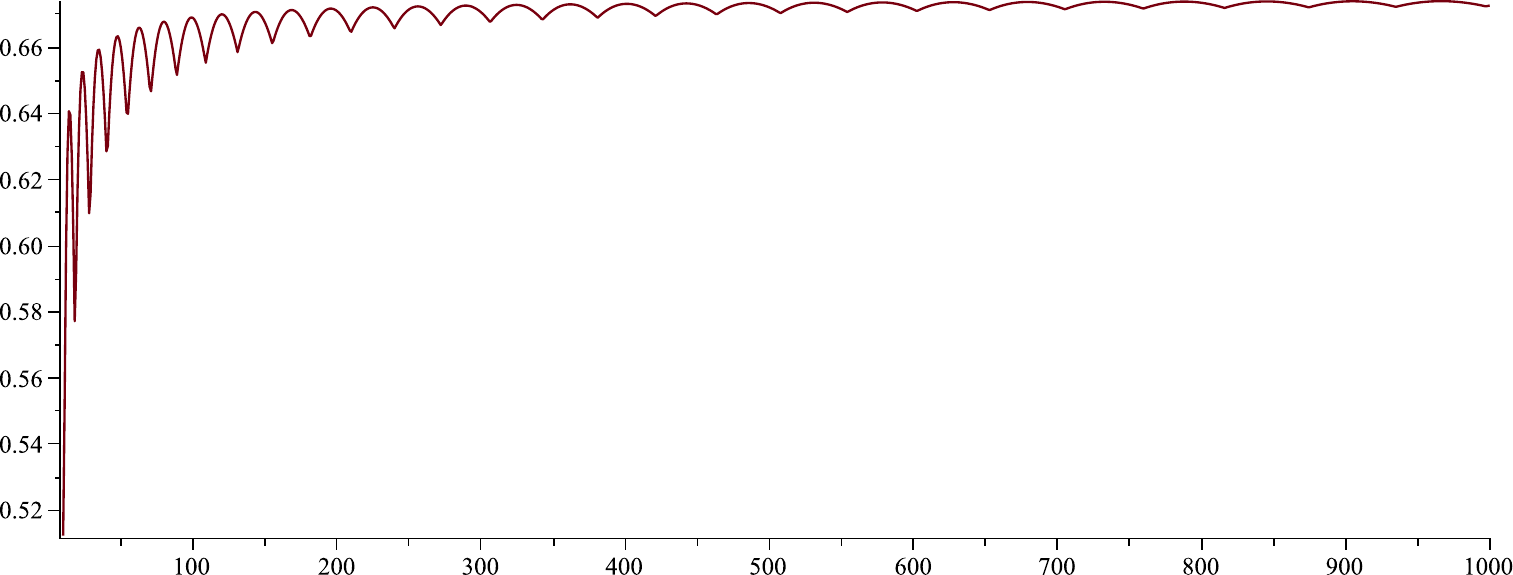}
\caption{}
\end{subfigure}

\medskip
\medskip

  \begin{subfigure}[]{\textwidth}
\includegraphics[width=\textwidth]{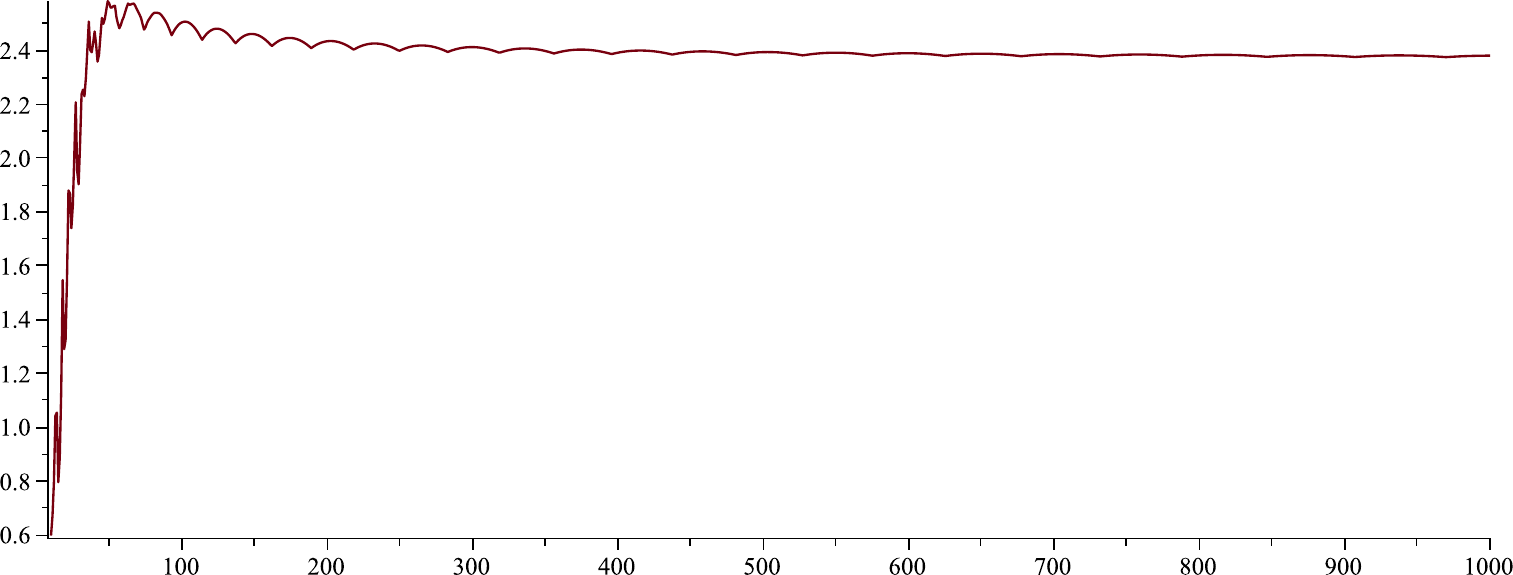}
\caption{}
\end{subfigure}
  \caption{Total variation distance for the height of Motzkin walks: plot of $n\ \!\hat{d}_n$ (a), and  $n^2\ \!\hat{d}_n^{\mathrm{rej}}$ (b), both for $n$ from $10$ to $1000$.} 
  \label{fig:dtvmotzheight}
\end{figure}

\begin{figure}
  \centering
\begin{subfigure}[]{\textwidth}
\begin{flushright}
\includegraphics[width=0.955\textwidth]{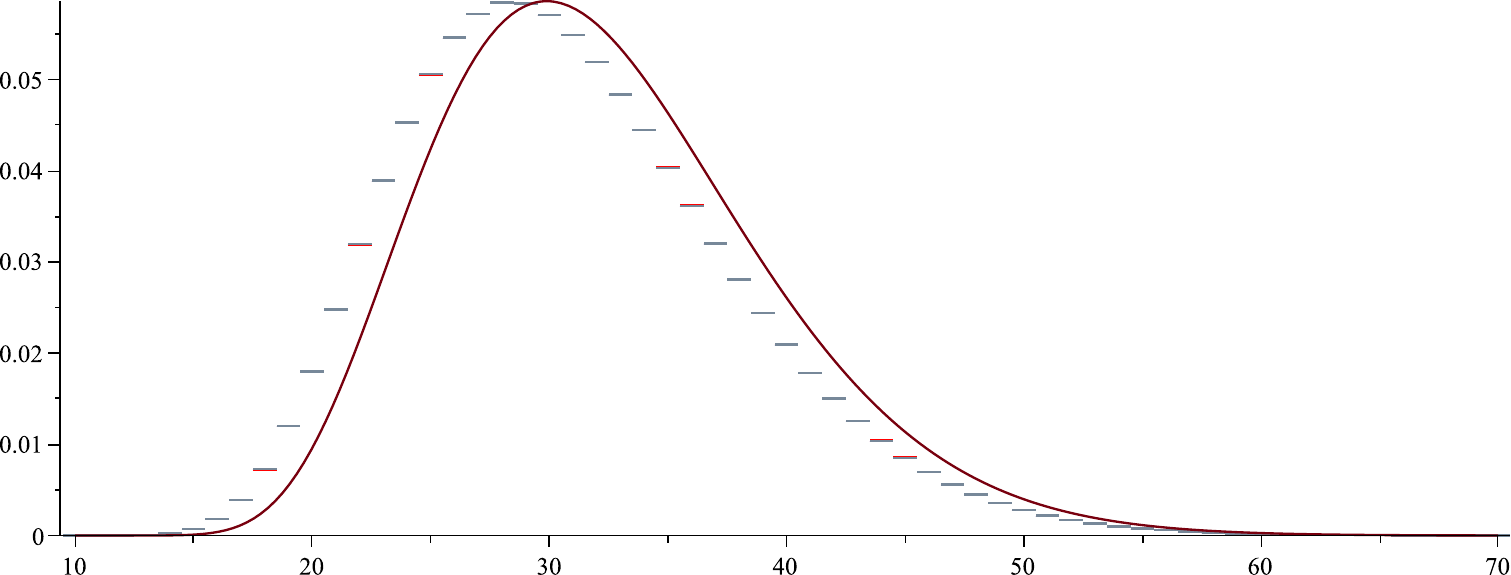}
\end{flushright}
\caption{}
\end{subfigure}

\medskip
\medskip

\begin{subfigure}[]{\textwidth}
\begin{flushright}
\includegraphics[width=0.99\textwidth]{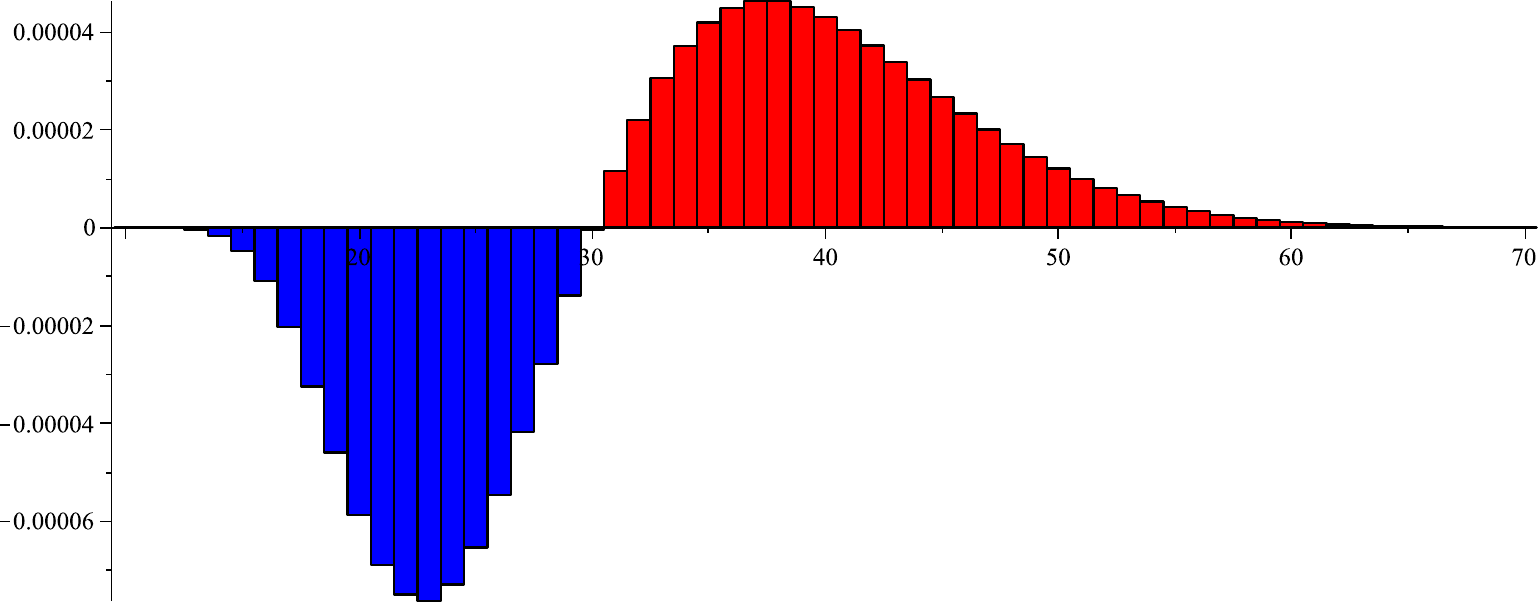}
\end{flushright}
\caption{}
\end{subfigure}

\medskip
\medskip

\begin{subfigure}[]{\textwidth}
\includegraphics[width=\textwidth]{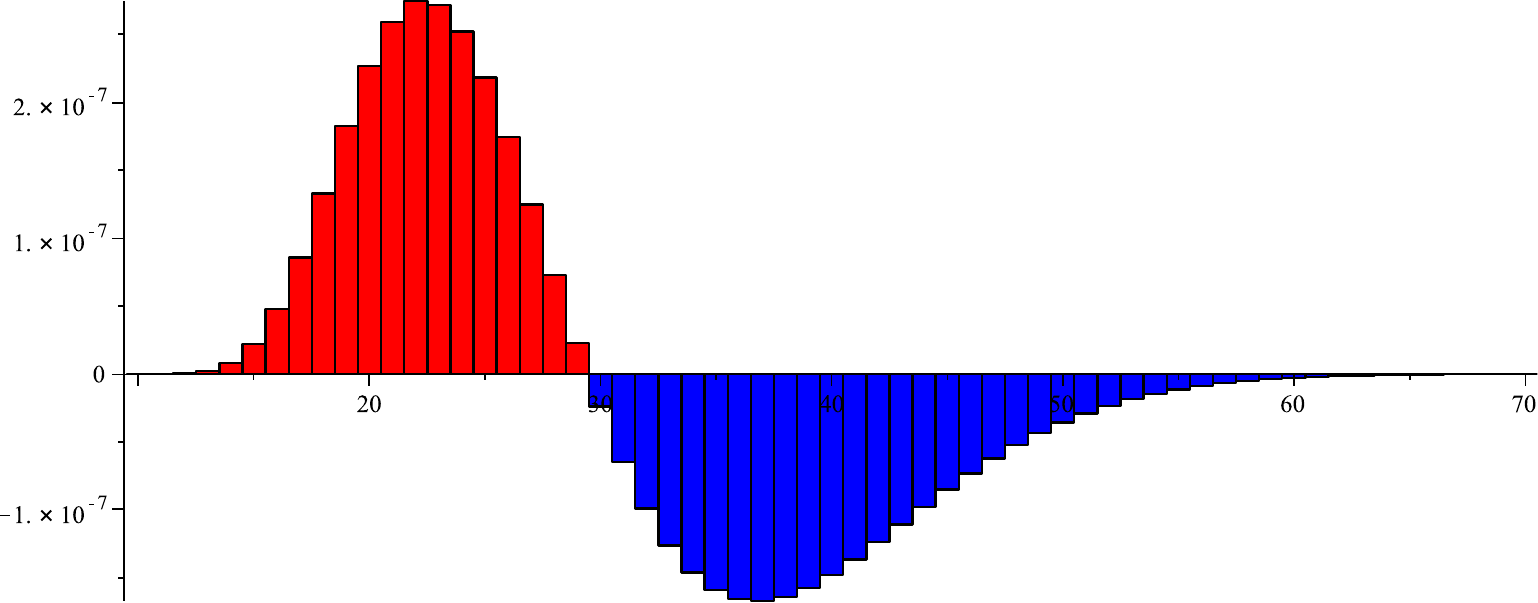}
\caption{}
\end{subfigure}
  \caption{
  Height in random Motzkin walks of size $n=1000$: (a) gives the (nearly indistinguishable) plots of $\hat{\pi}_n(h)$ (blue), $\hat{\pi}_n'(h)$ (red), and $\hat{\pi}_n^{\mathrm{rej}}(h)$ (gray), together with the limit curve; (b) the plot of $\hat{\pi}_n'(h)-\hat{\pi}_n(h)$, whose total area equals $2\,\hat{d}_n$; (c) the plot  
  of $\hat{\pi}_n^{\mathrm{rej}}(h)-\hat{\pi}_n(h)$, whose total area equals $2\,\hat{d}_n^{\mathrm{rej}}$.} 
  \label{fig:histogram_height_motz}
\end{figure}

On Motzkin walks we can also compute the total variation distance for the height parameter, using the fact that the height of a Motzkin walk equals the height of its Dyck core (for computations we need the coefficients $a_{k,h}$ giving the number of Dyck walks of length $2k$ and height $h$; these satisfy an explicit recurrence).  
Let $\hat{\pi}_n(h)$ (resp. $\hat{\pi}'_n(h)$, $\hat{\pi}^{\mathrm{rej}}_n(h)$) be the  
  distribution of the height under $\pi_n$ (resp. under $\pi_n'$, under $\pi_n^{\mathrm{rej}}$).   
Figure~\ref{fig:dtvmotzhauteur} shows the plots of $\hat{d}_n:=\dtv(\hat{\pi}_n,\hat{\pi}_n')=\frac1{2}\sum_h|\hat{\pi}_n(h)-\hat{\pi}_n'(h)|$ and of  
 $\hat{d}_n^{\mathrm{rej}}:=\dtv(\hat{\pi}_n,\hat{\pi}_n^{\mathrm{rej}})=\frac1{2}\sum_h|\hat{\pi}_n(h)-\hat{\pi}_n^{\mathrm{rej}}(h)|$ for $n$ from $10$ to $1000$. 
Then Figure~\ref{fig:dtvmotzheight}(a) shows the plot of $n\ \!\hat{d}_n$, which seems to converge. A coupling argument actually ensures\footnote{We omit details; coupling is first done on the core-size, then on the height. Letting $M(x):=1-2\sum_{k=1}^\infty (4k^2x^2-1)e^{-2k^2x^2}$ be the cumulative function for the height of the Brownian excursion, it  gives $\limsup\ \!n\ \!\hat{d}_n\leq c$, where $c=\frac3{16}\int_{\mathbb{R}}\big|M'(x)+xM''(x)\big| \mathrm{d}x\approx 0.67453$, which agrees with Figure~\ref{fig:dtvmotzheight} and could be the actual limit.} that $\hat{d}_n=O(1/n)$. 
Figure~\ref{fig:dtvmotzheight}(b) shows the plot of $n^2\ \!\hat{d}_n^{\mathrm{rej}}$, which also seems to converge. Again $\hat{d}_n^{\mathrm{rej}}=O(1/n^2)$ should be possible to prove by a coupling argument.  
We note that the better reduction $\hat{d}_n^{\mathrm{rej}}/d_n^{\mathrm{rej}}=\Theta(1/n)$ compared to $\hat{d}_n/d_n=\Theta(1/\sqrt{n})$ should be due to the fact that first order acceleration of convergence better centers core-size at the mode, as seen above.  
In Figure~\ref{fig:dtvmotzheight} the observed fluctuations should be the effect of whether the mode of the (interpolated) height distribution is close to an integer  
(in size $n$ the mode is at a value $h(n)=\Theta(\sqrt{n})$). 
Finally, as a counterpart to Figure~\ref{fig:histogram_core_motz}, Figure~\ref{fig:histogram_height_motz} shows the plot  of $\hat{\pi}_n(h)$ superimposed with the curve of the limit law\footnote{It is the conveniently rescaled law of the height of the Brownian excursion, whose cumulative function is $M(x):=1-2\sum_{k=1}^\infty (4k^2x^2-1)e^{-2k^2x^2}$.},  
and below the plot of $\hat{\pi}_n'(h)-\hat{\pi}_n(h)$ (we can observe a slight drift inherited from the drift for the core-size shown in Figure~\ref{fig:histogram_core_motz}(c)),  
and the plot of $\hat{\pi}_n^{\mathrm{rej}}(h)-\hat{\pi}_n(h)$. 

\begin{figure}
  \centering
  \includegraphics[width=12cm]{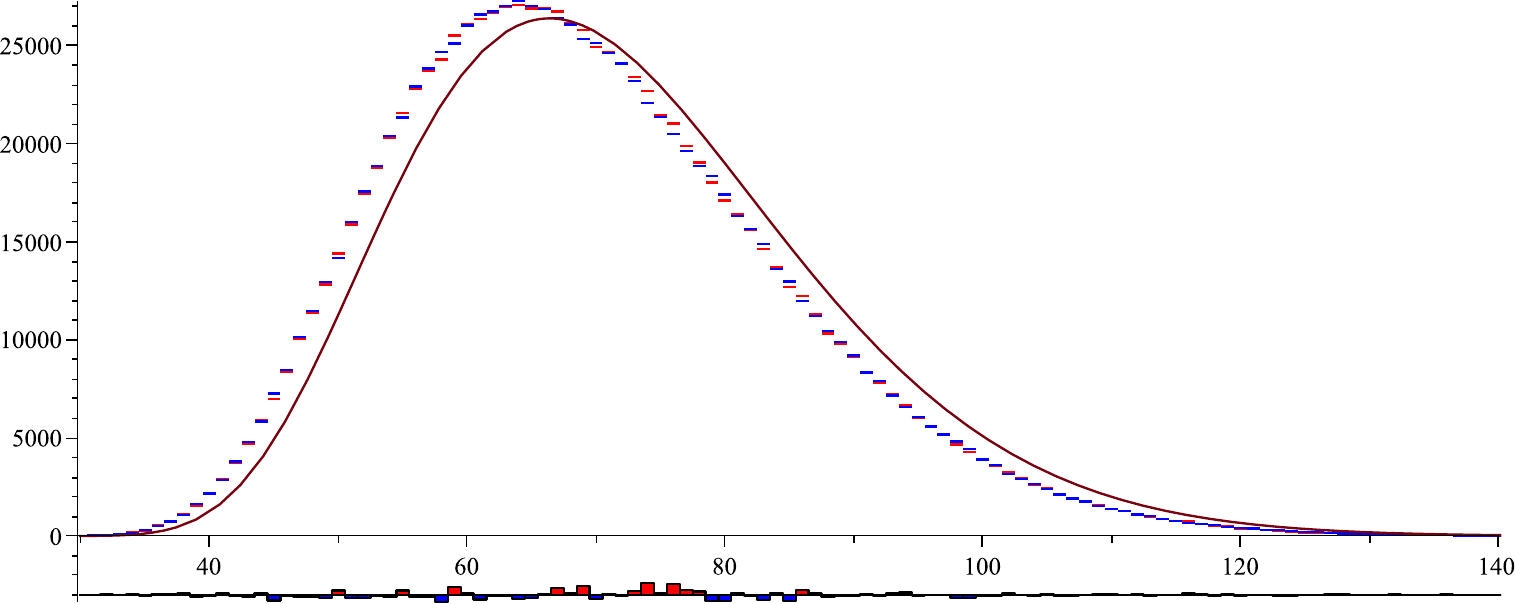}
  \caption{
  Histogram (size $1000$, with $10^6$ samples) for the height of random P\'olya trees under the uniform distribution (red) and the leap distribution (blue), superimposed with the limit curve. The difference between the two histograms is shown below.} 
  \label{fig:histogram_height_polya_1000}
\end{figure}

\begin{figure}
  \centering
  \includegraphics[width=12cm]{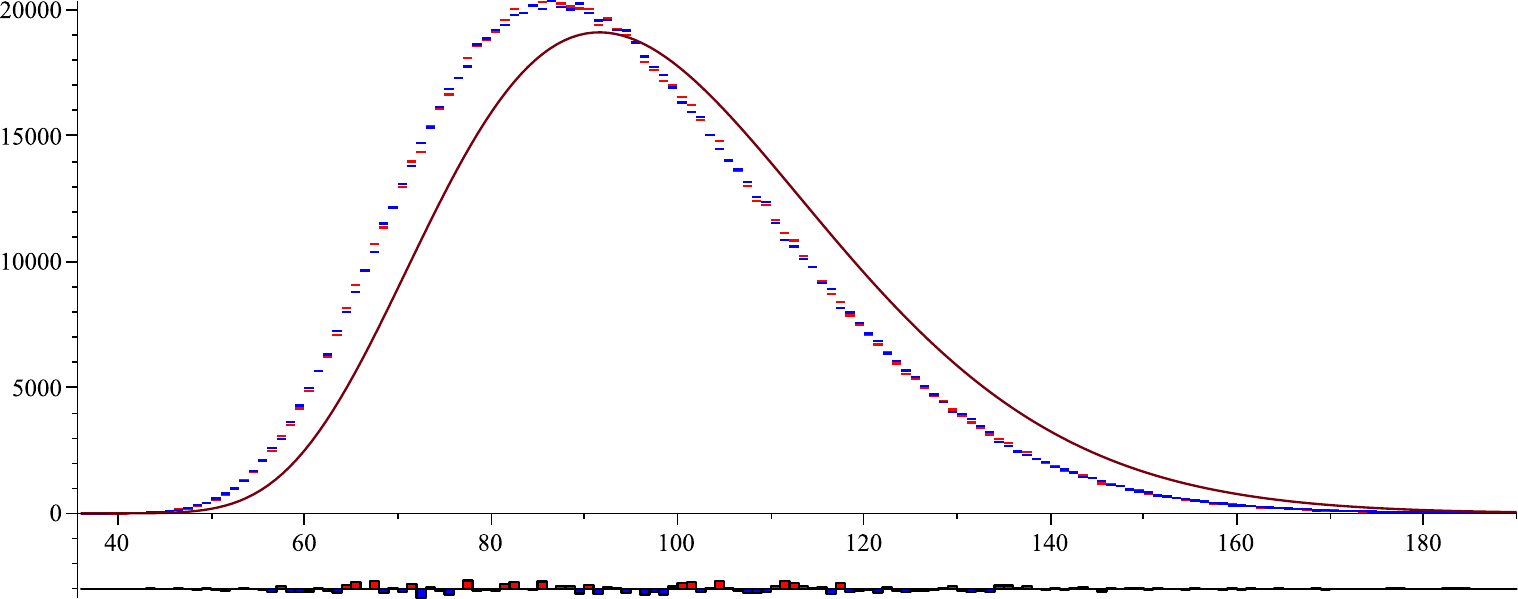}
  \caption{
  Histogram (size $1000$, with $10^6$ samples) for the height of random phylogenetic trees under the uniform distribution (red) and the leap distribution (blue), superimposed with the limit curve. The difference between the two histograms is shown below.} 
  \label{fig:histogram_height_bin_1000}
\end{figure}
\begin{figure}
  \centering
  \includegraphics[width=12cm]{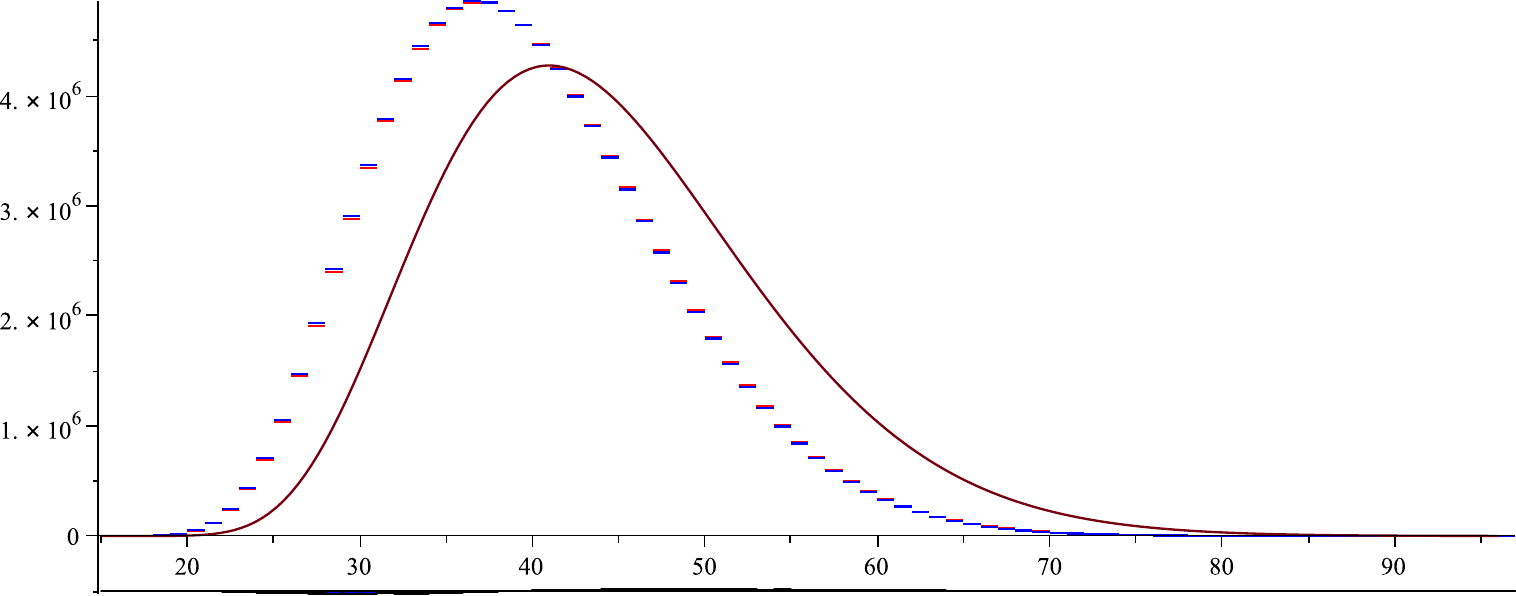}\\[.4cm]
  \includegraphics[width=12cm]{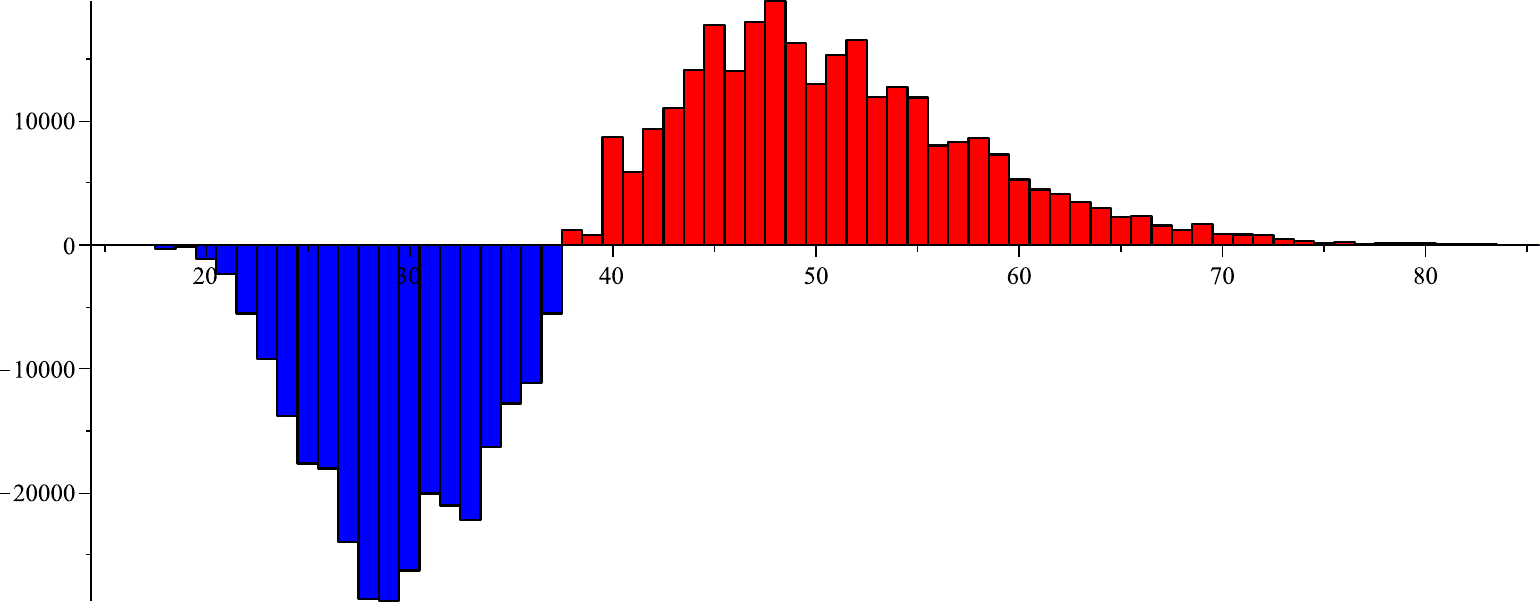}
  \caption{
  Same as Figure~\ref{fig:histogram_height_bin_1000}, in size $200$ with $10^8$ samples. The difference between the two histograms is shown below (unzoomed and zoomed).
  } 
  \label{fig:histogram_height_bin_200}
\end{figure}

For phylogenetic trees and P\'olya trees, exact computations of $\dtv(\pi_n,\pi_n')$ are also possible, but are more time-consuming than for Motzkin walks\footnote{Exact computations of total variation distance for the height parameter are even more out of reach, as the height of a rooted tree endowed with an automorphism is not necessarily equal to the height of its core.}. Instead, we provide histograms based on running the leap generator multiple times at a chosen size, also validating the practical efficiency of our approach. 
Figure~\ref{fig:histogram_height_polya_1000} (resp. Figure~\ref{fig:histogram_height_bin_1000}) show histograms for the height of random P\'olya (resp. phylogenetic) trees at size $1000$ with $10^6$ samples. We see that the two histograms under uniform~\footnote{For P\'olya trees and unlabeled phylogenetic trees, our histograms for the uniform distribution have been obtained by the recursive method of sampling, via the Maple/combstruct package.}  
and leap distribution are very close, whereas the limit density function is more distant (suggesting a quite slow convergence, as already noted in~\cite{bartholdi2024algorithm}), even more for phylogenetic trees. The difference of the two histograms is shown in the lower part of the figure. For P\'olya trees we observe that the height is very slightly drifted to the right under the leap distribution, similarly as for Motzkin walks (Figure~\ref{fig:histogram_height_motz}(b)), whereas for phylogenetic trees positive and negative values alternate in a quite random way. The problem is that the noise due to randomness may dominate the very slight drift effect.  
For phylogenetic trees the drift effect appears more clearly in Figure~\ref{fig:histogram_height_bin_200}, with a smaller size $200$ to accommodate for a much larger number of samples $10^8$, making fluctuations small enough. 

Leap generators also allow us to obtain histograms at larger size than possible with the recursive method. To illustrate convergence to the limit curve, Figure~\ref{fig:histograms_height} shows histograms for several sizes, up to $10^7$, with $10^6$ sampled phylogenetic trees in each case. In large size, the noise due to randomness could be reduced by grouping values into buckets; alternatively one can plot cumulative histograms, as shown in the right column of the figure. 
Figure~\ref{fig:histogram_path_length} then shows histograms for P\'olya trees of another distance-related parameter, namely the average path-length (average of the depths of the nodes of the tree), superimposed with the limit density function, which is explicitly known (see~\cite[Sec.4.2]{bartholdi2024algorithm} and references therein).  

\begin{figure}
  \centering
\includegraphics[width=6cm]{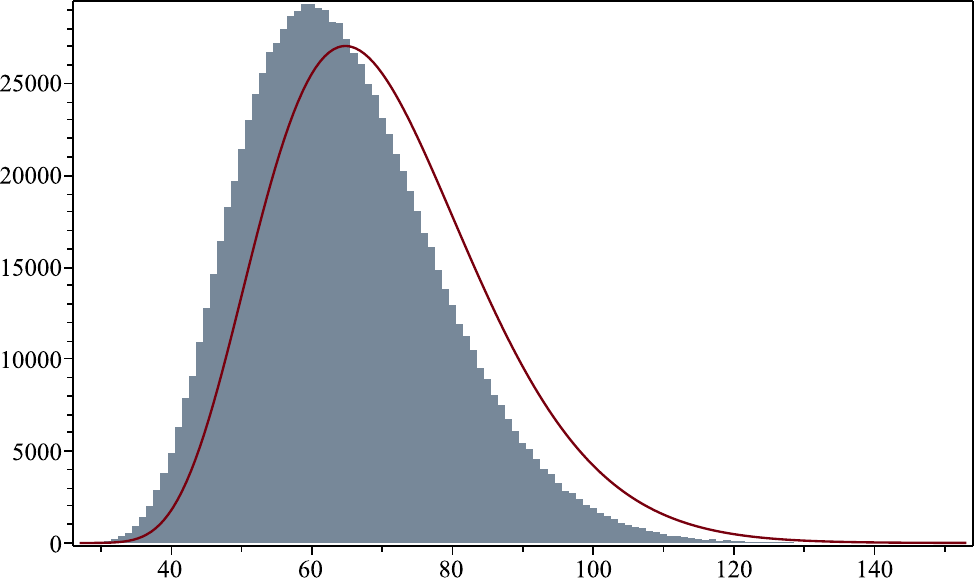}\ \ \includegraphics[width=6cm]{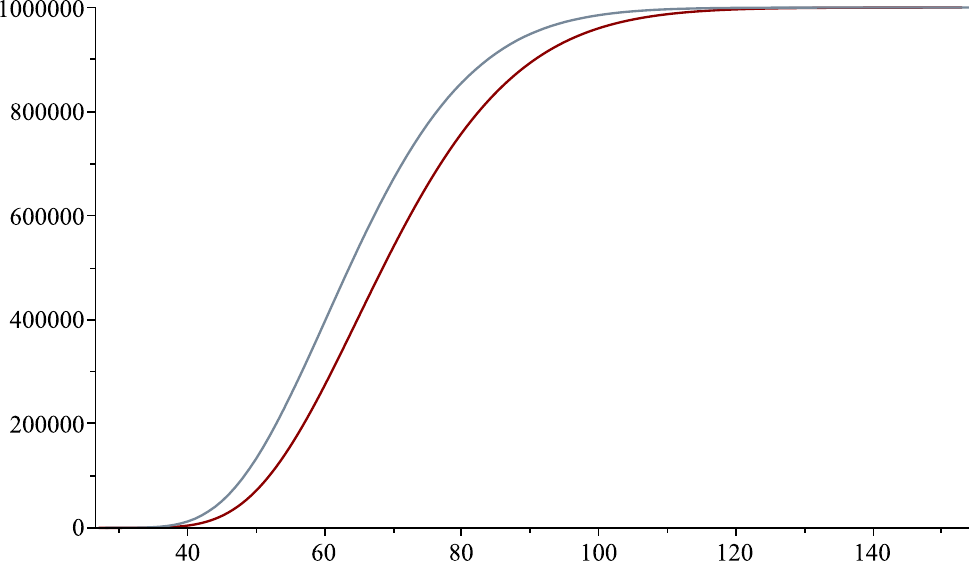}\\[.4cm]
\includegraphics[width=6cm]{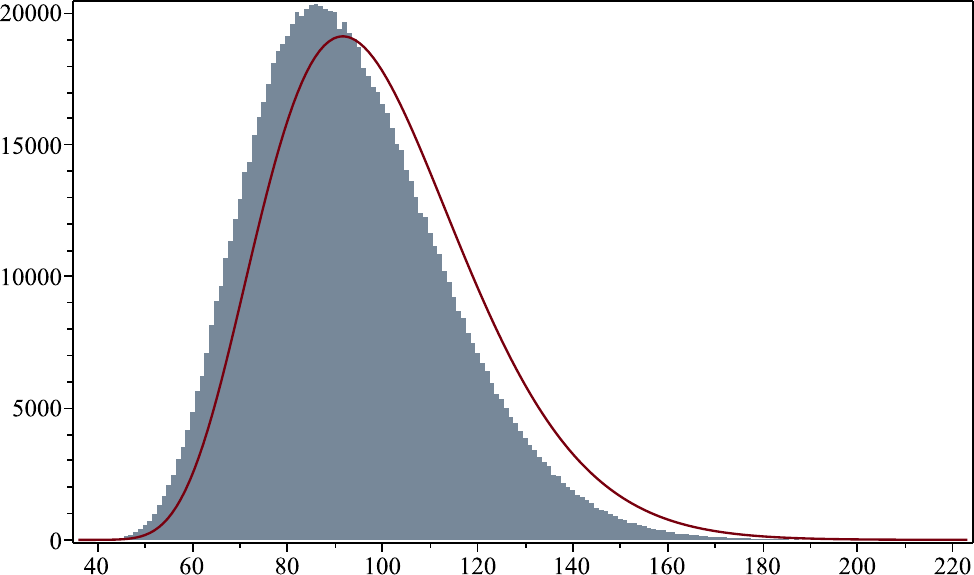}\ \ \includegraphics[width=6cm]{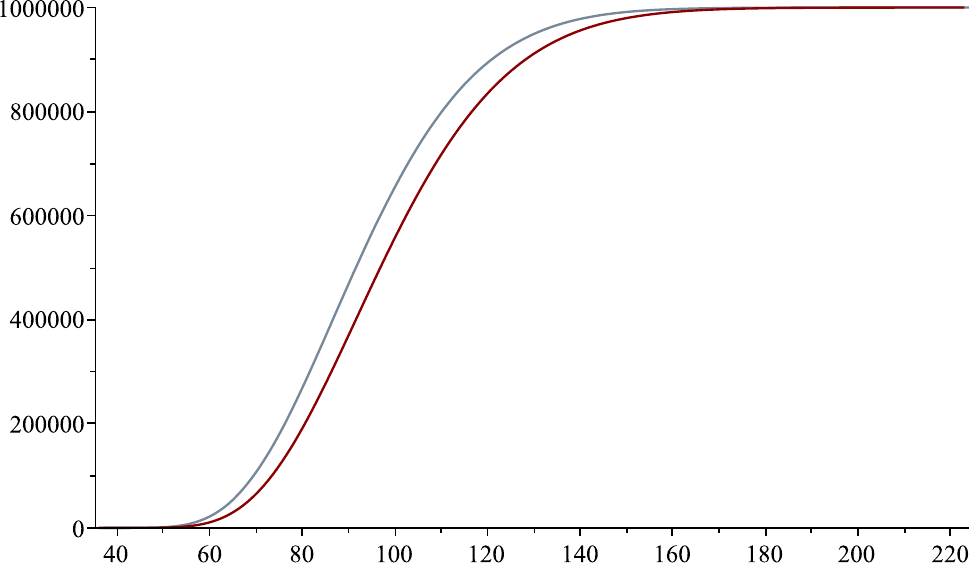}\\[.4cm]
\includegraphics[width=6cm]{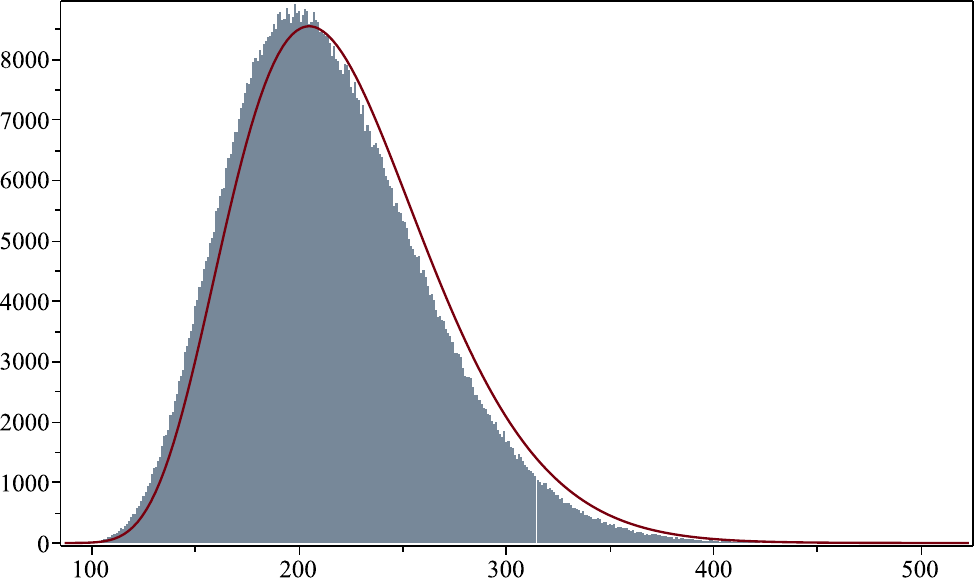}\ \ \includegraphics[width=6cm]{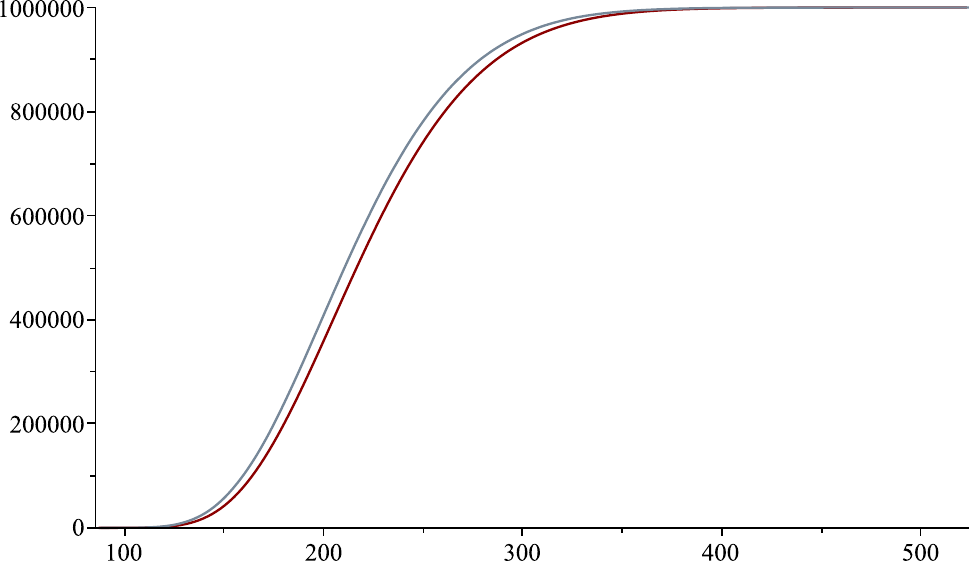}\\[.4cm]
\includegraphics[width=6cm]{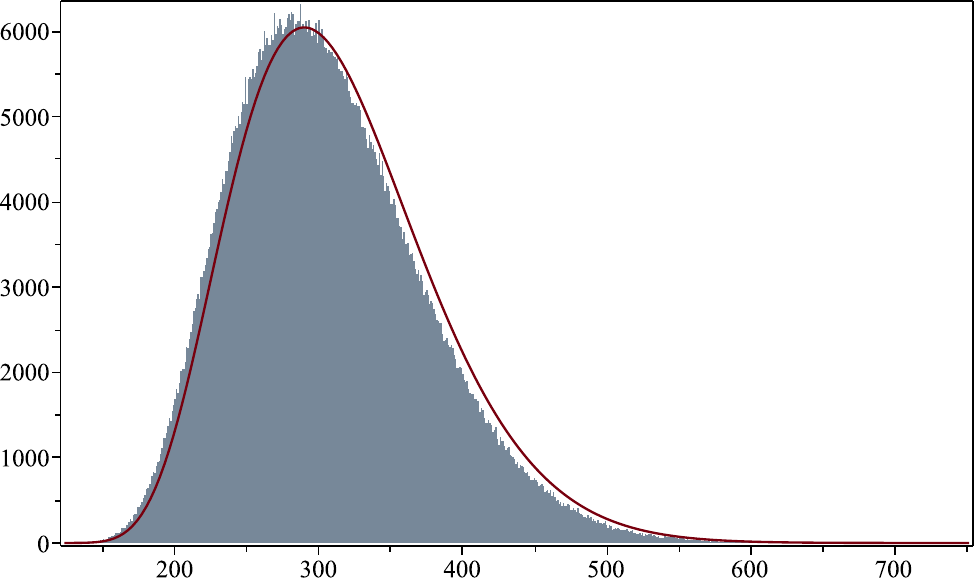}\ \ \includegraphics[width=6cm]{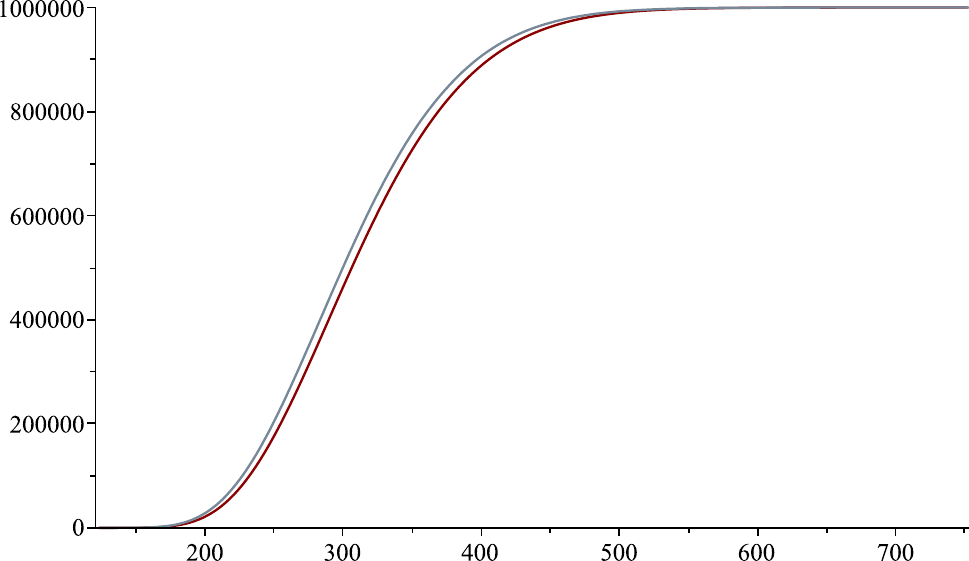}\\[.4cm]
  \caption{Histograms for the height in random phylogenetic trees under the leap distribution, in sizes $500,1000,5000,10000$ from top to bottom, with $10^6$ samples in each case. The non-cumulative (resp. cumulative) histograms are given in the left (resp. right) column. In each case, the histogram is superimposed with the limit density (resp. cumulative) function colored red.} \label{fig:histograms_height}
\end{figure}

Finally, Figure~\ref{fig:leaves_cherries} shows histograms for more local parameters, namely the number of leaves in random P\'olya trees, and the number of cherries (nodes whose two children are leaves) in random phylogenetic trees. 
This time, similarly as in Figure~\ref{fig:histogram_core_motz}(a), each histogram is very close to the limit density function (a normal law whose mean and variance are computed thanks to the quasi-power theorem~\cite[IX.5]{flajolet2009analytic}), suggesting a faster convergence rate.   

For random maps decomposed into blocks (Section~\ref{sec:maps_blocks}), we have tried to compute the total variation distance $d_n=\dtv(\pi_n,\pi_n')$, but at the moment could only carry out\ computations\footnote{Due to the formula~\eqref{eq:pinprime} for the distortion factor, it  requires to count maps with control on all block sizes, not only the size of the root-block.} up to size $50$, which appears as too small to have conclusive plots (further optimization should make it possible to push the computations to higher values).  

\medskip

\begin{figure}
  \centering
\includegraphics[width=12cm]{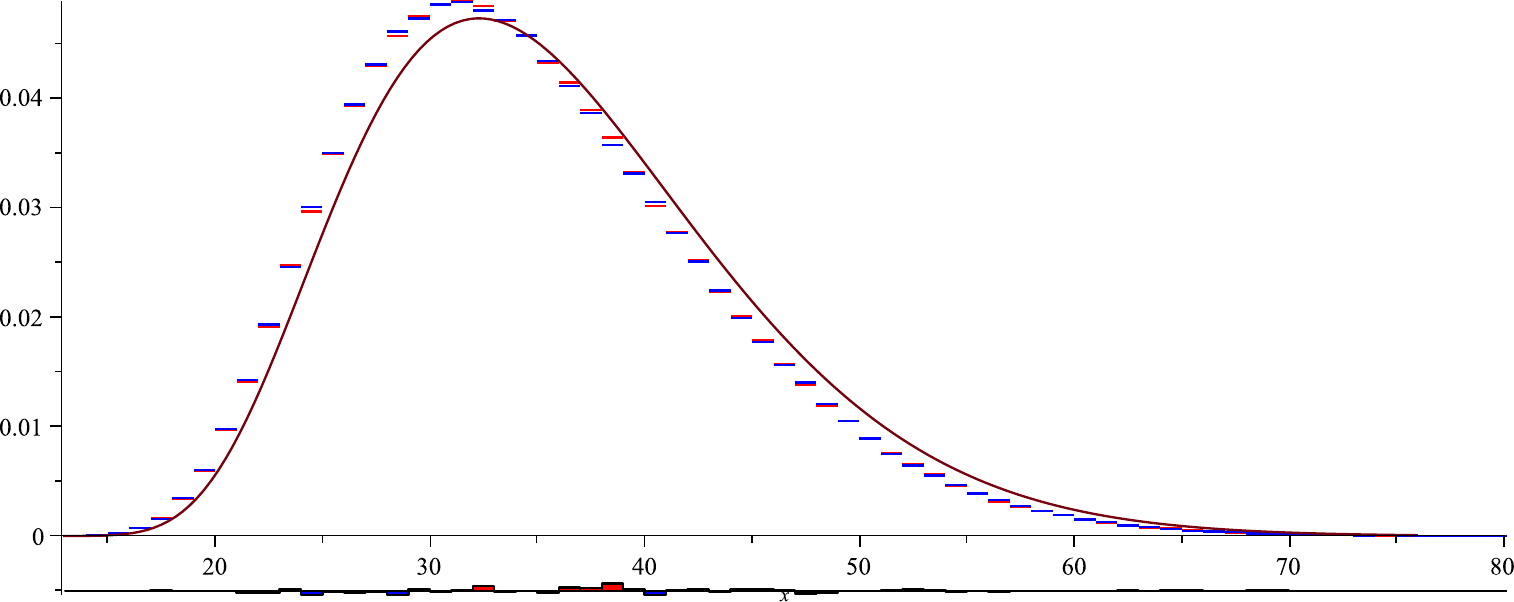}
  \caption{Histograms for the average path-length in random P\'olya trees under the uniform distribution (blue) and the leap distribution (red), in size $10^3$ with $10^6$ samples, superimposed with the limit density function. Samples are grouped into buckets according to the floor of the average path length. The difference between the two histograms is shown below.} 
  \label{fig:histogram_path_length}
\end{figure}

\begin{figure}
  \centering
  \begin{subfigure}[B]{.87\textwidth}
\includegraphics[width=\textwidth]{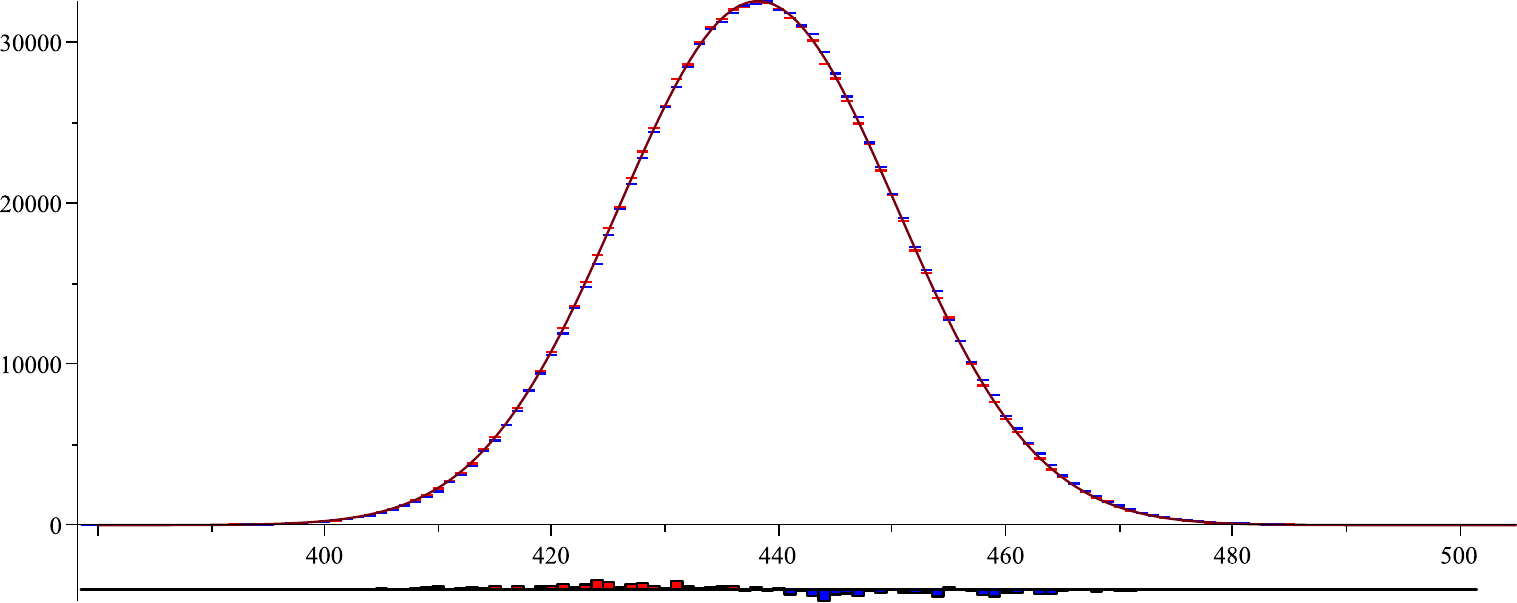}
\end{subfigure}

\medskip

\begin{subfigure}[B]{.87\textwidth}
  \includegraphics[width=\textwidth]{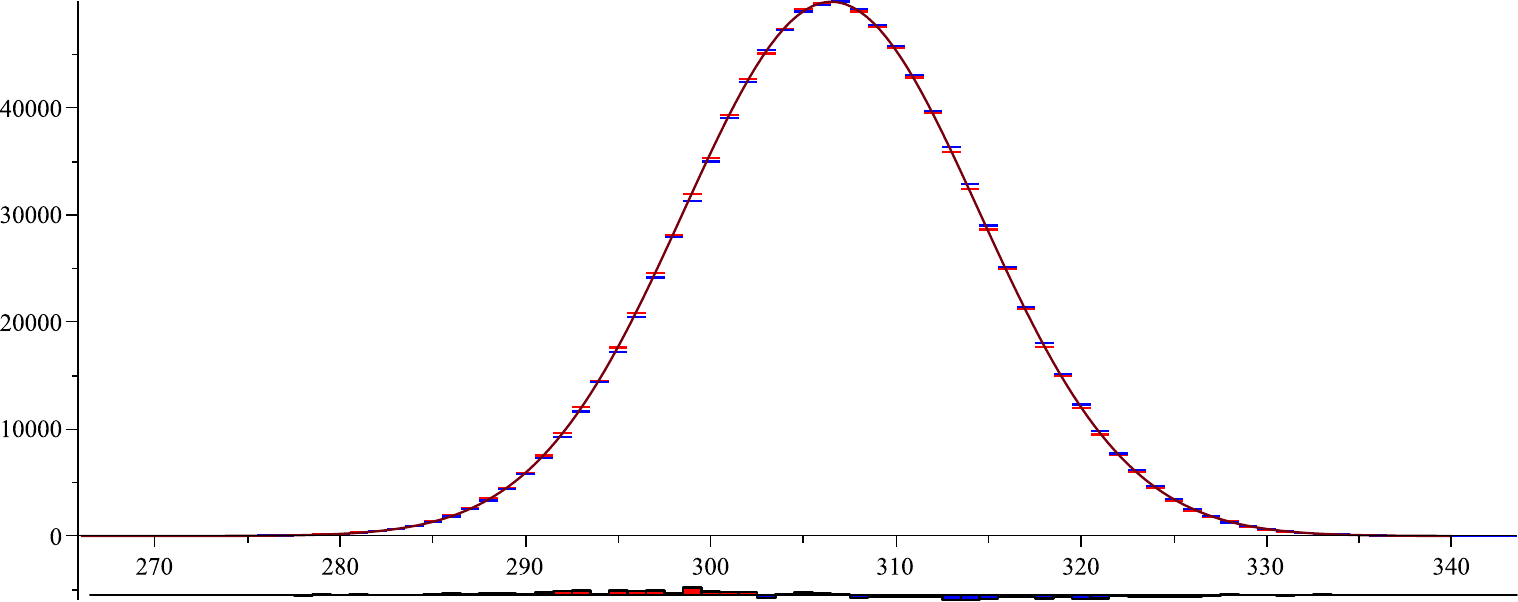}
  \end{subfigure}
   \caption{Histograms for the number of leaves in random P\'olya trees (top) and cherries in random phylogenetic trees (bottom) of size $10^3$ with $10^6$ samples, for the uniform distribution (blue) and the leap distribution (red), superimposed with the Gaussian limit density function.} \label{fig:leaves_cherries}
\end{figure}

\pagebreak

\emph{Acknowledgements.}  The authors are grateful to Persi Diaconis, Konstantinos Panagiotou, and Pablo Rotondo for interesting discussions. First (resp. second) author is partially supported by ANR-23-CE48-0018 CartesEtPlus (resp. ANR-25-CE48-3555 Plasma).

\bibliographystyle{plain}
\bibliography{Biblio}

\end{document}